\theoremstyle{plain}
\newtheorem{theorem}{Theorem}[section]
\newtheorem*{Theorem B}{Theorem B}
\newtheorem*{Theorem A}{Theorem A}
\newtheorem{proposition}{Proposition}[section]
\newtheorem{corollary}{Corollary}[section]
\newtheorem{example}{Example}[section]
\numberwithin{equation}{section}
\theoremstyle{remark}
\newtheorem{remark}{Remark}[section]
 \numberwithin{equation}{section}
\def\<{\left < }
\def\>{\right >}
\def\({\left ( }
\def\){\right )}
\def\e{\eqref}
\title[CR-warped submanifolds in Kaehler manifolds]
{Geometry of warped product and CR-warped product submanifolds in Kaehler manifolds: modified version}
\author{ Bang-Yen Chen}
\subjclass[2000]{53C40, 53C42, 53C50}
\keywords{Warped product submanifold,  $CR$-warped product, twisted product submanifold.}
\begin{document}

\begin{abstract} 
The warped product $N_1\times_f N_2$ of two Riemannian manifolds $(N_1,g_1)$ and $(N_2,g_2)$ is the product manifold $N_1\times N_2$ equipped with the warped product metric $g=g_1+f^2 g_2$, where $f$ is a positive function on $N_1$. 
Warped products play very important roles in differential geometry as well as in physics. 
A submanifold $M$ of a Kaehler manifold $\tilde M$ is called a $CR$-warped product if it is a warped product $M_T\times_f N_\perp$ of a complex submanifold $M_T$ and a totally real submanifold $M_\perp$ of $\tilde M$.

 In this article we survey recent results on  warped product and $CR$-warped product submanifolds in Kaehler manifolds. 
Several closely related results will also be presented.

\end{abstract}

\maketitle

\setcounter{equation}{0}

\section{{Introduction}}

\def\<{\left < }
\def\>{\right >}

Let $B$ and $F$ be two Riemannian manifolds with Riemannian metrics $g_B$ and $g_F$, respectively, and let
$f$ be a positive function on $B$. Consider the product manifold $B\times F$ with
its projection $\pi:B\times F\to B$ and $\eta:B\times F\to F$. The {\it warped product}
$M=B\times_f F$ is the manifold $B\times F$ equipped with the warped product Riemannian metric given by
\begin{equation}\label{E:warped} g=g_B+f^2 g_F\end{equation}
We call the function $f$  the {\it warping function\/} of the warped product \cite{bishop}. The notion of warped products plays important roles in differential geometry as well as in physics, especially in the theory of general relativity (cf. \cite{c11,oneill}). 

A submanifold $M$ of a Kaehler manifold $(\tilde M, \tilde g,J)$ is called a {\it $CR$-submanifold} if there exist a holomorphic distribution $\mathcal D$ and a totally real distribution $\mathcal D^\perp$ on $M$ such that $TM=\mathcal D\oplus \mathcal D^\perp$, where $TM$ denotes the tangent bundle of $M$. The notion of $CR$-submanifolds was introduced by A. Bejancu  (cf. \cite{bejancu}). 

On the other hand, the author proved in \cite{c7} that there do not exist warped product submanifolds of the form $M_\perp\times_f N_T$ in any Kaehler manifold $\tilde M$ such that $N_T$ is a holomorphic submanifold and $N_\perp$ is a totally real submanifold of $\tilde M$. Moreover, the author introduced the notion of {\it $CR$-warped products}  in \cite{c7} as follows.
A submanifold $M$ of a Kaehler manifold $\tilde M$ is called a $CR$-warped product if it is a warped product $M_T\times_f N_\perp$ of a complex submanifold $M_T$ and a totally real submanifold $M_\perp$ of $\tilde M$.

A famous embedding theorem of J. F. Nash \cite{nash} states that every Riemannian manifold can be isometrically imbedded in a Euclidean space with sufficiently high codimension.  In particular, the Nash theorem implies that {\it every warped product manifold $N_1\times_f N_2$ can be isometrically embedded as a Riemannian submanifold in a Euclidean space.}  

In view of Nash's theorem, the author asked at the beginning of this century the following two fundamental questions (see \cite{c02,c02-2,c11}).
\vskip.05in

{\bf Fundamental Question A.} {\it What can we conclude from an arbitrary isometric
immersion of a warped product manifold into a Euclidean space or more generally, into an arbitrary Riemannian manifold\,?} 
\vskip.03in

{\bf Fundamental Question B.} {\it What can we conclude from an arbitrary $CR$-warped product manifold into an arbitrary complex-space-form or more generally, into an arbitrary
Kaehler manifold\,?} 
 \vskip.03in
 
 The study of these two questions was  initiated  by the author  in a series of his articles [11, 13--15, 17--25, 29, 34].  Since then the study of  warped product submanifolds has become an active research subject in differential geometry of submanifolds.
 
The  purpose of this article is to survey  recent results on  warped product and $CR$-warped product submanifolds in Kaehler manifolds. Several closely related results will also be presented.

\section{Preliminaries} In this section we provide some basic notations, formulas, definitions, and results.

For the submanifold $M$ we denote by $\nabla$ and ${\tilde \nabla}$ the Levi-Civita connections of $M$ and $\tilde M^m$, respectively. The Gauss and Weingarten
formulas are given respectively by (see, for instance, \cite{cbook,c11,c15})
\begin{align} &{\tilde \nabla}_{X}Y=\nabla_{X} Y +\sigma(X,Y),\\ &{\tilde\nabla}_{X}\xi =
A_{\xi}X+D_{X}\xi \end{align} 
for any  vector fields $X,Y$ tangent to $M$ and  vector field $\xi$ normal to $M$, where $\sigma$ denotes the second fundamental form, $D$ the normal connection, and $A$ the shape operator
of the submanifold. 

 Let $M$ be an $n$-dimensional submanifold of a
 Riemannian $m$-manifold $\tilde M^m$.   We choose a local field of orthonormal
frame
$e_1,\ldots,e_n,e_{n+1},\ldots,e_m$ in $\tilde M^m$ such that, restricted to  $M$, the vectors $e_1,\ldots,e_n$ are tangent to $M$ and hence $e_{n+1},\ldots,e_m$ are normal to $M$. 
Let $\{\sigma^r_{ij}\}$, $i,j=1,\ldots,n;\,r=n+1,\ldots,m$,  denote the coefficients of the second fundamental form $h$ with respect to $e_1,\ldots,e_n,e_{n+1},\ldots,e_m$. Then we have $$\sigma^r_{ij}=\<\sigma(e_i,e_j),e_r\>=\<A_{e_r}e_i,e_j\>,$$ where $\<\;\,,\;\>$ denotes the inner product.

The mean curvature vector $\overrightarrow{H}$ is defined by
\begin{align}\overrightarrow{H} = {1\over n}\,\hbox{\rm trace}\,\sigma = {1\over n}\sum_{i=1}^{n} \sigma(e_{i},e_{i}), \end{align}
where $\{e_{1},\ldots,e_{n}\}$ is a local  orthonormal frame of the tangent bundle $TM$ of $M$. The squared mean curvature is then given by $$H^2=\left<\right.\hskip-.02in \overrightarrow{H},\overrightarrow{H}\hskip-.02in\left.\right>.$$ A submanifold $M$  is called {\it minimal}  in $\tilde M^m$ if  its mean curvature vector  vanishes identically. 

Let $R$ and $\tilde R$ denote the Riemann curvature tensors of $M$ and $\tilde M^m$, respectively. The {\it equation of Gauss\/} is given  by \begin{equation}\begin{aligned}\label{2.4}  &R(X,Y;Z,W)=\tilde R(X,Y;Z,W)+\<\sigma(X,W),\sigma(Y,Z)\>\\&\hskip1.2in  -\<\sigma(X,Z),\sigma(Y,W)\>\end{aligned}\end{equation} 
for vectors $X,Y,Z,W$ tangent to $M$. For a submanifold of a Riemannian manifold of constant curvature $c$, we have 
\begin{equation}\begin{aligned}\label{2.5} R(X&,Y;Z,W)=c\{\left<X,W\right>\left<Y,Z\right>-
\left<X,Z\right>\left<Y,W\right>\}\\ & +\left<\sigma(X,W),\sigma(Y,Z)\right>
-\left<\sigma(X,Z),\sigma(Y,W)\right>.\end{aligned}\end{equation}

Let $M$ be a Riemannian $p$-manifold and  $e_1,\ldots,e_p$ be an orthonormal frame fields on $M$. For differentiable function $\varphi$ on $M$, the Laplacian $\Delta\varphi$ of $\varphi$ is defined  by
\begin{equation} \Delta\varphi=\sum_{j=1}^p \{(\nabla_{e_j}e_j)\varphi-e_je_j\varphi\}.\end{equation}
For any orthonormal basis $e_1,\ldots,e_n$ of the tangent space $T_pM$ at a point $p\in M$, the scalar curvature $\tau$ of $M$ at $p$ is defined to be (cf. [9, 10, 26])
\begin{align}\tau(p)=\sum_{i<j} K(e_i\wedge e_j),\end{align} where $K(e_i\wedge e_j)$ denotes the sectional curvature of the plane section spanned by $e_i$ and $e_j$.

\section{Warped products in space forms}

Let $M_1,\ldots,M_k$ be  $k$ Riemannian manifolds and let  $$f:M_1\times\cdots\times M_k\to \mathbb E^N$$ be an isometric immersion of the Riemannian product $M_1\times\cdots\times M_k$ into the Euclidean $N$-space $\mathbb E^N$. J. D. Moore
\cite{moore}  proved that if the second fundamental form $\sigma$ of $f$ has the property that $\sigma(X,Y)=0$ for $X$  tangent to $M_i$ and $Y$ tangent to $M_j$, $i\ne j$, then $f$ is a product immersion, that is, there exist isometric immersions $f_i:M_i\to  E^{m_i},\, 1\leq i\leq k$, such that
\begin{equation}f(x_1,\ldots,x_k)=(f(x_1),\ldots,f(x_k))\end{equation}
when $x_i\in M_i$ for $1\leq i\leq k$.

 Let $\phi:N_1\times_f N_2\to R^m(c)$ be an isometric immersion of a warped product $N_1\times_f N_2$ into a Riemannian manifold with  constant sectional curvature $c$. Denote by $\sigma$ the second fundamental form of $\phi$. 
  The immersion $\phi:N_1\times_f N_2\to R^m(c)$ is called {\it mixed totally geodesic\/} if $\sigma(X,Z)=0$ for any $X$ in $\mathcal D_1$ and $Z$ in  $\mathcal D_2$.

The next theorem provides a solution to Fundamental Question A.

\begin{theorem}\label{T:3.1} \cite{c02} For any  isometric immersion \ $\phi:N_1\times_f N_2\to R^m(c)$ of a warped product $\,N_1\times_f N_2$  into a  Riemannian manifold of
constant curvature $c$, we have
\begin{equation}\label{E:wpfirst} \frac{\Delta f} {f}\leq \frac{n^2}{4n_2}H^2+ n_1c,\end{equation}
 where $n_i=\dim N_i$, $n=n_1+n_2$,  $H^2$ is the squared mean curvature of $\phi$, and $\Delta f$ is the Laplacian of $f$ on  $N_1$. 

 The equality sign of \eqref{E:wpfirst}  holds identically if and only if $\iota :N_1\times_f N_2\to R^m(c)$ is a mixed totally geodesic immersion satisfying ${\rm trace}\, h_1={\rm trace}\,h_2$, where ${\rm trace}\, h_1$ and ${\rm trace }\,h_2$ denote the trace of $\sigma$ restricted to $N_1$ and $N_2$, respectively.\end{theorem}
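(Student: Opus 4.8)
The plan is to obtain the inequality by marrying an \emph{intrinsic} identity coming from the warped product structure of $N_1\times_f N_2$ with the \emph{extrinsic} Gauss equation \eqref{2.5}, and then to close with a single elementary algebraic estimate on the mean curvature vector. Throughout I write $n_1=\dim N_1$, $n_2=\dim N_2$, and fix at a point an orthonormal frame $e_1,\dots,e_{n_1},e_{n_1+1},\dots,e_n$ with $e_1,\dots,e_{n_1}$ tangent to $N_1$ (spanning $\mathcal D_1$) and $e_{n_1+1},\dots,e_n$ tangent to $N_2$ (spanning $\mathcal D_2$).

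First I would establish the purely intrinsic formula
\begin{equation}\label{E:intrinsic}
\sum_{i=1}^{n_1}\sum_{s=n_1+1}^{n} K(e_i\wedge e_s)=\frac{n_2\,\Delta f}{f}.
\end{equation}
The starting point is the standard warped product connection identity $\nabla_X Z=(X\ln f)\,Z$ for $X$ tangent to $N_1$ and $Z$ tangent to $N_2$; from it one gets $\langle\nabla_Z Z,X\rangle=-(X\ln f)$ for unit $Z$, and hence, for unit $X\in\mathcal D_1$ and unit $Z\in\mathcal D_2$, the mixed sectional curvature $K(X\wedge Z)=-\tfrac1f\,\mathrm{Hess}\,f(X,X)$, the Hessian being that of $N_1$. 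Summing over the frame and recalling that $\Delta f=-\sum_{i}\mathrm{Hess}\,f(e_i,e_i)$ in the sign convention fixed in Section~2 produces \eqref{E:intrinsic}. \textbf{This is the step I expect to be the main obstacle}: it is the only place where the warped product structure is actually used, and one must keep track of the sign conventions for the Hessian, the sectional curvature, and the Laplacian all at once so that the identity emerges with the correct sign and the correct factor $n_2$.

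Next I would feed the Gauss equation into the left-hand side of \eqref{E:intrinsic}. Applying \eqref{2.5} with $X=W=e_i$ and $Y=Z=e_s$ yields, for each mixed pair,
\begin{equation*}
K(e_i\wedge e_s)=c+\langle\sigma(e_i,e_i),\sigma(e_s,e_s)\rangle-|\sigma(e_i,e_s)|^2 .
\end{equation*}
Summing over $1\le i\le n_1<s\le n$ and writing $A=\mathrm{trace}\,h_1=\sum_{i=1}^{n_1}\sigma(e_i,e_i)$ and $B=\mathrm{trace}\,h_2=\sum_{s=n_1+1}^{n}\sigma(e_s,e_s)$, the cross terms collapse to $\langle A,B\rangle$, so that \eqref{E:intrinsic} becomes
\begin{equation*}
\frac{n_2\,\Delta f}{f}=n_1 n_2\,c+\langle A,B\rangle-\sum_{i=1}^{n_1}\sum_{s=n_1+1}^{n}|\sigma(e_i,e_s)|^2 .
\end{equation*}
Discarding the nonnegative term $\sum_{i,s}|\sigma(e_i,e_s)|^2$ and invoking the identity $4\langle A,B\rangle=|A+B|^2-|A-B|^2\le|A+B|^2$ gives $\langle A,B\rangle\le\tfrac14|A+B|^2=\tfrac{n^2}{4}H^2$, since $A+B=n\overrightarrow H$. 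Substituting and dividing by $n_2$ yields exactly \eqref{E:wpfirst}.

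Finally, for the equality discussion I would simply read off when each of the two estimates above is sharp. The inequality $\sum_{i,s}|\sigma(e_i,e_s)|^2\ge0$ is an equality precisely when $\sigma(X,Z)=0$ for all $X\in\mathcal D_1$ and $Z\in\mathcal D_2$, i.e.\ when $\phi$ is mixed totally geodesic; and $4\langle A,B\rangle\le|A+B|^2$ is an equality precisely when $A=B$, i.e.\ when $\mathrm{trace}\,h_1=\mathrm{trace}\,h_2$. Since both conditions are needed for equality in \eqref{E:wpfirst}, this gives exactly the stated characterization.
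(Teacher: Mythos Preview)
Your argument is correct in every step, including the sign bookkeeping you flagged: with the paper's Laplacian convention $\Delta f=\sum_j\{(\nabla_{e_j}e_j)f-e_je_jf\}$ one has $\Delta f=-\mathrm{tr}\,\mathrm{Hess}\,f$, and the warped-product curvature identity $K(X\wedge Z)=-\tfrac{1}{f}\mathrm{Hess}\,f(X,X)$ then yields exactly \eqref{E:intrinsic}. The Gauss-equation step, the packaging $A+B=n\overrightarrow H$, the polarization $4\langle A,B\rangle=|A+B|^2-|A-B|^2$, and the reading-off of the equality conditions are all fine.

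As for comparison with the paper: this is a survey article and it does not actually prove Theorem~\ref{T:3.1}; it merely states the result and cites \cite{c02}. Your proof is essentially the argument of \cite{c02}, only streamlined. In the original, Chen works normal-direction-by-normal-direction and invokes a preparatory algebraic lemma of the type ``for reals $a_1,\dots,a_n$, $\;2\sum_{i\le n_1<s}a_ia_s\le \tfrac12(\sum a_k)^2$ with equality iff $\sum_{i\le n_1}a_i=\sum_{s>n_1}a_s$,'' applied to the diagonal entries $\sigma^r_{kk}$ for each $r$. Your use of the vector identity $4\langle A,B\rangle\le |A+B|^2$ is exactly the coordinate-free version of that lemma, so the two routes coincide in substance; yours just avoids choosing a normal frame. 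There is no gap.
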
  
   
By  making a minor modification of the proof of Theorem \ref{T:3.1} in \cite{c02}, using the method of \cite{c05}, we also have the following  general solution from \cite{CWei} to the Fundamental Question A.

\begin{theorem}\label{T:3.2} If $\tilde M^m_c$ is a Riemannian manifold with sectional curvatures bounded from above by a constant $c$, then for any isometric immersion $\phi : N_1 \times _f N_2 \to \tilde M^m_c$   from a warped product $N_1 \times _f N_2$ into  $\tilde M^m_c$ the warping function $f$ satisfies
\begin{align}\label{3.1} \frac{\Delta f} {f}\leq \frac{n^2}{4n_2}H^2+ n_1c,\end{align}
where $n_1= \dim N_1$ and $n_2=\dim N_2$.  \end{theorem}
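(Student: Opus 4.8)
The plan is to mimic the proof of Theorem \ref{T:3.1}, replacing the exact curvature identity with the one-sided bound coming from the hypothesis. The starting point is a purely intrinsic identity for warped products. I would choose a local orthonormal frame $e_1,\dots,e_{n_1}$ tangent to $N_1$ and $e_{n_1+1},\dots,e_n$ tangent to $N_2$, and first recall the warped-product connection relation $\nabla_X Z=(Xf/f)\,Z$ for $X$ tangent to $N_1$ and $Z$ tangent to $N_2$. From this one computes, for unit $e_i$ tangent to $N_1$ and unit $e_j$ tangent to $N_2$, that $K(e_i\wedge e_j)=f^{-1}\{(\nabla_{e_i}e_i)f-e_ie_i f\}$, and summing over $i$ and $j$ gives, directly from the definition of $\Delta f$,
\begin{equation*}
\sum_{i=1}^{n_1}\sum_{j=n_1+1}^{n} K(e_i\wedge e_j)=n_2\,\frac{\Delta f}{f}.
\end{equation*}

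Next I would feed the Gauss equation \eqref{2.4} into this identity. For each mixed pair $(i,j)$ one has $K(e_i\wedge e_j)=\tilde K(e_i\wedge e_j)+\langle\sigma(e_i,e_i),\sigma(e_j,e_j)\rangle-|\sigma(e_i,e_j)|^2$, and here is the single point where the new hypothesis enters: since the sectional curvatures of $\tilde M^m_c$ are bounded above by $c$, we have $\tilde K(e_i\wedge e_j)\le c$, and because $\tilde K$ appears with a positive sign this replacement preserves the direction of the inequality. Summing over the $n_1 n_2$ mixed pairs and discarding the non-positive term $-\sum|\sigma(e_i,e_j)|^2$ yields
\begin{equation*}
n_2\,\frac{\Delta f}{f}\le n_1 n_2 c+\sum_{i=1}^{n_1}\sum_{j=n_1+1}^{n}\langle\sigma(e_i,e_i),\sigma(e_j,e_j)\rangle.
\end{equation*}

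It then remains to bound the mixed inner-product sum by $H^2$. Writing $\sigma_1=\sum_{i=1}^{n_1}\sigma(e_i,e_i)$ and $\sigma_2=\sum_{j=n_1+1}^{n}\sigma(e_j,e_j)$, the sum equals $\langle\sigma_1,\sigma_2\rangle$, and since $\sigma_1+\sigma_2=n\overrightarrow{H}$ the polarization identity $4\langle\sigma_1,\sigma_2\rangle=|\sigma_1+\sigma_2|^2-|\sigma_1-\sigma_2|^2\le|\sigma_1+\sigma_2|^2=n^2H^2$ gives $\langle\sigma_1,\sigma_2\rangle\le\tfrac{n^2}{4}H^2$. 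Dividing the last display by $n_2$ and inserting this bound delivers \eqref{3.1}. The only genuine obstacle is the bookkeeping for the warped-product curvature identity in the first step, which requires the connection formulas specific to $N_1\times_f N_2$; once that identity is in hand, everything else is the same algebra as in Theorem \ref{T:3.1}, the sole modification being the passage from the equality $\tilde K=c$ to the bound $\tilde K\le c$.
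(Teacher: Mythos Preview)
Your argument is correct and follows precisely the route the paper indicates: the warped-product identity $\sum K(e_i\wedge e_j)=n_2\,\Delta f/f$, the Gauss equation, and the algebraic bound $\langle\sigma_1,\sigma_2\rangle\le \tfrac{n^2}{4}H^2$ are exactly the ingredients of the proof of Theorem~\ref{T:3.1}, and replacing the equality $\tilde K=c$ by the hypothesis $\tilde K\le c$ is the ``minor modification'' the paper alludes to. Your polarization identity is a clean way to state the algebraic step that Chen usually carries out component-by-component in each normal direction, but the content is the same.
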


An immediate consequence of Theorem \ref{T:3.2} is the following.

\begin{corollary} \label{C:3.1} There do not exist  minimal  immersions of a Riemannian product $N_1\times N_2$ of two Riemannian manifolds into a negatively curved Riemannian manifold $\tilde M$.
\end{corollary}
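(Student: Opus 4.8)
The plan is to derive Corollary \ref{C:3.1} directly from Theorem \ref{T:3.2} applied to the Riemannian product viewed as a trivial warped product. First I would observe that a Riemannian product $N_1 \times N_2$ is precisely the warped product $N_1 \times_f N_2$ with constant warping function $f \equiv 1$. This reduces the problem to understanding what Theorem \ref{T:3.2} says when $f$ is constant.

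Next I would compute the left-hand side of inequality \eqref{3.1} in this degenerate case. Since $f \equiv 1$ is constant on $N_1$, its Laplacian vanishes, so $\Delta f / f = 0$. Suppose, for contradiction, that a minimal isometric immersion $\phi : N_1 \times N_2 \to \tilde M$ exists, where $\tilde M$ is negatively curved. Because $\tilde M$ has strictly negative sectional curvature, its curvatures are bounded above by some constant $c < 0$; thus Theorem \ref{T:3.2} applies with this $c$. Minimality means the mean curvature vector vanishes identically, so $H^2 = 0$. Substituting these values into \eqref{3.1} yields
\begin{align}\label{E:cor-contradiction} 0 = \frac{\Delta f}{f} \leq \frac{n^2}{4n_2} \cdot 0 + n_1 c = n_1 c.\end{align}
Since $n_1 = \dim N_1 \geq 1$ and $c < 0$, the right-hand side $n_1 c$ is strictly negative, giving the contradiction $0 \leq n_1 c < 0$.

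The argument is essentially immediate once Theorem \ref{T:3.2} is in hand, so there is no serious obstacle; the only point requiring a small amount of care is confirming that ``negatively curved'' supplies a uniform upper bound $c < 0$ on the sectional curvatures, which is exactly the hypothesis Theorem \ref{T:3.2} requires. One subtlety worth flagging is whether the conclusion should be stated with a strict upper bound $c < 0$ or merely $c \leq 0$: the contradiction genuinely needs $n_1 c < 0$, so strict negativity of the ambient curvature is what does the work. I would therefore present the corollary as a clean specialization, emphasizing that the vanishing of both $\Delta f / f$ and $H^2$ collapses inequality \eqref{3.1} to the impossible statement $0 \leq n_1 c$ with $n_1 c < 0$.
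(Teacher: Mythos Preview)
Your argument is correct and is exactly the specialization the paper has in mind: the corollary is stated as an immediate consequence of Theorem \ref{T:3.2}, and your derivation---setting $f\equiv 1$ so that $\Delta f/f=0$, using minimality to get $H^2=0$, and reading off the contradiction $0\le n_1 c<0$---is precisely that immediate consequence. Regarding the subtlety you flag about a uniform bound $c<0$, note that the inequality underlying Theorem \ref{T:3.2} is obtained pointwise from the Gauss equation, so at each point one may take $c$ to be any upper bound for the ambient sectional curvatures at that image point; strict negativity of the curvature then yields the contradiction pointwise without needing a global uniform bound.
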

  
For arbitrary  warped products submanifolds in complex hyperbolic spaces, we have the following general results from \cite{c02-4}.

\begin{theorem}\label{T:3.3} Let $\phi:N_1\times_f N_2\to CH^m(4c)$ be an arbitrary isometric immersion of a warped product $N_1\times_f N_2$ into the complex hyperbolic $m$-space $CH^m(4c)$ of constant holomorphic sectional curvature $4c$. Then we have
\begin{align} \label{E:1.1} {\Delta f\over {f}}\leq {{(n_1+n_2)^2}\over{4n_2}}H^2+ n_1c. \end{align} 

The equality sign of \eqref{E:1.1} holds if and only if the following three conditions hold.
\begin{enumerate}

\item  $\phi$ is mixed totally geodesic, 

\item ${\rm trace}\,h_1= {\rm trace}\,h_2$, and

\item $J{\mathcal D_1}\perp {\mathcal D_2}$, where $J$ is the almost complex structure of $CH^m$.
\end{enumerate}\end{theorem}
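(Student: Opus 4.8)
The plan is to run the same machinery that underlies Theorem \ref{T:3.1}, but to confront head-on the fact that $CH^m(4c)$ is not a real space form: its curvature tensor carries extra terms built from the complex structure $J$. First I would record the purely intrinsic warped-product identity, which holds for the induced (warped) metric on $M=N_1\times_f N_2$ regardless of the immersion. Choosing orthonormal frames $e_1,\dots,e_{n_1}$ tangent to $N_1$ and $e_{n_1+1},\dots,e_n$ tangent to $N_2$, O'Neill's curvature formulas give
\begin{equation}\label{pf:wp} \frac{\Delta f}{f}=\frac{1}{n_2}\sum_{i=1}^{n_1}\sum_{s=n_1+1}^{n}K(e_i\w e_s),\end{equation}
where $K$ is the sectional curvature of $M$. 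This is the same input used for Theorem \ref{T:3.1}.

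Next I would pass each mixed term through the Gauss equation \eqref{2.4}, which with $Z=Y$, $W=X$ splits it as $K(e_i\w e_s)=\tilde K(e_i\w e_s)+\<\sigma(e_i,e_i),\sigma(e_s,e_s)\>-|\sigma(e_i,e_s)|^2$. The decisive new ingredient relative to $R^m(c)$ is the ambient sectional curvature: for an orthonormal pair in $CH^m(4c)$ a direct computation from the complex-space-form curvature tensor yields $\tilde K(e_i\w e_s)=c\,(1+3\<Je_i,e_s\>^2)$. Summing over all mixed planes and dividing by $n_2$ therefore contributes
\begin{equation}\label{pf:amb} \frac{1}{n_2}\sum_{i,s}\tilde K(e_i\w e_s)=n_1c+\frac{3c}{n_2}\sum_{i,s}\<Je_i,e_s\>^2.\end{equation}
Here is the crux, and the step I expect to be the main obstacle. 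In the real space form case this $J$-term is simply absent, so the genuinely new work is to control it; the point is that for the hyperbolic space $c<0$, so the extra summand in \eqref{pf:amb} is nonpositive and may be discarded to leave the clean bound $n_1c$. This sign is exactly what makes the estimate go through and it forces condition (3): equality requires $\sum_{i,s}\<Je_i,e_s\>^2=0$, i.e. $J\mathcal D_1\perp\mathcal D_2$.

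Finally I would bound the extrinsic terms by the elementary argument of Theorem \ref{T:3.1}. Writing $\sigma^r_{ab}=\<\sigma(e_a,e_b),e_r\>$ and setting $A_r=\sum_{i=1}^{n_1}\sigma^r_{ii}$, $B_r=\sum_{s=n_1+1}^{n}\sigma^r_{ss}$, one has $A_r+B_r=nH_r$ with $H_r=\<\overrightarrow H,e_r\>$, so $A_rB_r\le\tfrac14(A_r+B_r)^2$ gives
\begin{equation}\label{pf:ext} \sum_{i,s}\<\sigma(e_i,e_i),\sigma(e_s,e_s)\>-\sum_{i,s}|\sigma(e_i,e_s)|^2\le\sum_r A_rB_r\le\frac{n^2}{4}H^2,\end{equation}
after dropping the nonnegative term $\sum_{i,s}|\sigma(e_i,e_s)|^2$. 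Dividing by $n_2$ and combining \eqref{pf:wp}, \eqref{pf:amb} and \eqref{pf:ext} yields $\tfrac{\Delta f}{f}\le n_1c+\tfrac{n^2}{4n_2}H^2$, which is \eqref{E:1.1} since $n=n_1+n_2$. Tracing the equality cases then recovers all three conditions: vanishing of the discarded term $\sum_{i,s}|\sigma(e_i,e_s)|^2$ is mixed total geodesy (condition (1)), equality in $A_rB_r\le\tfrac14(A_r+B_r)^2$ for every $r$ is ${\rm trace}\,h_1={\rm trace}\,h_2$ (condition (2)), and vanishing of the $J$-term is $J\mathcal D_1\perp\mathcal D_2$ (condition (3)).
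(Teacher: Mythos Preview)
Your argument is correct and is precisely the standard route to this inequality. The survey paper does not actually include a proof of Theorem \ref{T:3.3}; it merely quotes the result from \cite{c02-4}. What you have written is exactly the method used there and in \cite{c02}: the intrinsic warped-product identity \eqref{pf:wp}, the Gauss equation, the complex-space-form curvature formula $\tilde K(e_i\wedge e_s)=c\bigl(1+3\<Je_i,e_s\>^2\bigr)$, the sign observation that $c<0$ allows one to discard the $J$-term (producing condition (3) at equality), and finally the elementary estimate $A_rB_r\le\tfrac14(A_r+B_r)^2$ on the partial traces. Your identification of the equality conditions is likewise on target, and the converse direction (that (1)--(3) force equality) is immediate from your chain of inequalities, so the ``if and only if'' is complete.
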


Some interesting immediate consequences of Theorem \ref{T:3.3} are the following  non-existence results  \cite{c02-4}.

\begin{corollary}\label{C:3.2} Let $N_1\times_f N_2$ be a warped product whose warping function $f$ is  harmonic. Then  $N_1\times_f N_2$ does not admit an isometric minimal immersion into any complex hyperbolic space.
 \end{corollary}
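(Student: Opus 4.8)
The plan is to derive a contradiction by specializing the inequality of Theorem \ref{T:3.3} to the case at hand. Suppose, for the sake of contradiction, that there exists an isometric minimal immersion $\phi : N_1\times_f N_2 \to CH^m(4c)$ of a warped product with harmonic warping function into some complex hyperbolic space. Since $CH^m(4c)$ has constant holomorphic sectional curvature $4c<0$, the constant $c$ appearing in Theorem \ref{T:3.3} satisfies $c<0$; I would record this at the outset, as it is the crux of the sign argument.

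First I would invoke Theorem \ref{T:3.3}, which applies to an arbitrary isometric immersion of $N_1\times_f N_2$ into $CH^m(4c)$ and yields
\begin{align*} \frac{\Delta f}{f}\leq \frac{(n_1+n_2)^2}{4n_2}H^2+n_1 c. \end{align*}
Next I would feed in the two hypotheses. Minimality of $\phi$ means the mean curvature vector $\overrightarrow{H}$ vanishes identically, so $H^2=0$ and the first term on the right-hand side disappears. Harmonicity of the warping function means $\Delta f=0$ in the sign convention fixed in Section 2; since $f$ is by definition a positive function, this gives $\Delta f/f=0$ on the left-hand side.

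Substituting both facts into the displayed inequality leaves $0\leq n_1 c$. But $n_1=\dim N_1\geq 1$ and $c<0$, whence $n_1 c<0$, a contradiction. Therefore no such minimal immersion can exist, which is exactly the assertion of the corollary.

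As for the main obstacle: there is essentially no analytic difficulty once Theorem \ref{T:3.3} is granted, since the entire content is a short piece of sign bookkeeping. The one point that genuinely must be checked is the internal consistency of conventions, namely that \emph{harmonic} is interpreted as $\Delta f=0$ for the geometer's Laplacian defined in Section 2, and that the ambient curvature constant $c$ is \emph{strictly} negative for a complex hyperbolic space, so that $n_1 c<0$ rather than merely $n_1 c\leq 0$. Getting either sign wrong would collapse the argument, so I would verify both conventions carefully before committing to the final substitution.
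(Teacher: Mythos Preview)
Your proposal is correct and matches the paper's approach: the paper simply records Corollary~\ref{C:3.2} as an immediate consequence of Theorem~\ref{T:3.3} without further argument, and your contradiction via $0\leq n_1 c<0$ is exactly how that immediate consequence is read off. Your attention to the sign conventions (geometer's Laplacian and $c<0$) is appropriate and nothing more is needed.
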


\begin{corollary}\label{C:3.3} If $f$ is an eigenfunction of Laplacian on $N_1$ with eigenvalue $\lambda>0$, then  $N_1\times_f N_2$ does not admits an isometric minimal immersion into any complex hyperbolic space.
\end{corollary}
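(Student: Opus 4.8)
The plan is to deduce the corollary directly from Theorem \ref{T:3.3}, in exactly the spirit of the immediately preceding Corollary \ref{C:3.2}. The entire content is carried by the inequality \eqref{E:1.1}; the proof is then a short sign contradiction, and the only real care needed is in bookkeeping two signs: the sign of the curvature constant $c$ for $CH^m(4c)$, and the sign convention built into the definition of $\Delta$ given in the Preliminaries.

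First I would argue by contradiction, supposing that $\phi:N_1\times_f N_2\to CH^m(4c)$ is an isometric \emph{minimal} immersion. Minimality means the mean curvature vector vanishes identically, so $H^2=0$, and \eqref{E:1.1} collapses to
$$\frac{\Delta f}{f}\leq n_1 c.$$
Next I would pin down the sign of the right-hand side: since $CH^m(4c)$ is the complex hyperbolic space, its holomorphic sectional curvature $4c$ is negative, hence $c<0$, and as $n_1=\dim N_1\ge 1$ we get $n_1 c<0$.

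Finally I would invoke the eigenfunction hypothesis. Because the warping function satisfies $f>0$ everywhere, the quotient $\Delta f/f$ is well defined pointwise, and $\Delta f=\lambda f$ gives $\Delta f/f=\lambda$ identically. The Laplacian convention fixed in the Preliminaries is the positive (analyst's negative) one, so that the assumption $\lambda>0$ genuinely produces a \emph{positive} quotient. Combining the three facts yields
$$0<\lambda=\frac{\Delta f}{f}\leq n_1 c<0,$$
which is absurd. Hence no such minimal immersion exists, proving the corollary.

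I do not anticipate any genuine obstacle: once minimality is substituted into Theorem \ref{T:3.3}, the statement is a one-line contradiction. The only place an error could creep in—and therefore the point I would state explicitly—is the consistent reading of the two conventions, namely $c<0$ for $CH^m(4c)$ together with the positive-Laplacian convention under which $\lambda>0$ forces $\Delta f/f>0$; reversing either convention would silently break the inequality chain.
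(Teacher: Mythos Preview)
Your proof is correct and is exactly the argument the paper has in mind: the corollary is stated as an immediate consequence of Theorem \ref{T:3.3}, and the paper gives no further details beyond that. Your careful handling of the two sign conventions (the positivity of the Laplacian as defined in the Preliminaries, and $c<0$ for $CH^m(4c)$) is precisely the point one must check, and you have done so.
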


\begin{corollary}\label{C:3.4}If $N_1$ is compact, then every warped product $N_1\times_f N_2$ does not admit an isometric minimal immersion into any complex hyperbolic space.
\end{corollary}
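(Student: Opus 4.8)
The plan is to turn the inequality of Theorem~\ref{T:3.3} into a pointwise sign condition on $\Delta f$ and then contradict it using compactness, exactly in the spirit of the proofs of Corollaries~\ref{C:3.2} and \ref{C:3.3}. First I would record that $CH^m(4c)$, being complex hyperbolic, has $c<0$, so $n_1 c<0$. Suppose, for contradiction, that an isometric minimal immersion $\phi:N_1\times_f N_2\to CH^m(4c)$ exists. Then $H^2=0$, and \eqref{E:1.1} collapses to
\begin{equation*}\frac{\Delta f}{f}\leq n_1 c<0 .\end{equation*}
Since the warping function satisfies $f>0$, multiplying through yields the pointwise bound $\Delta f\leq n_1 c\,f<0$ at every point of $N_1$.

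The second step invokes the compactness of $N_1$. Here the decisive point is the sign convention fixed in Section~2, where $\Delta\varphi=\sum_j\{(\nabla_{e_j}e_j)\varphi-e_je_j\varphi\}$ is the \emph{positive} Laplacian, i.e.\ $\Delta=-\operatorname{div}\nabla$. I would argue by the maximum principle: the positive continuous function $f$ attains its maximum at some $p_0\in N_1$, where its Hessian is negative semidefinite, whence $\Delta f(p_0)\geq 0$. This contradicts $\Delta f(p_0)<0$ from the first step, and the corollary follows. Equivalently, one may integrate over the closed manifold $N_1$: the divergence theorem gives $\int_{N_1}\Delta f\,dV=0$, while the strict pointwise inequality forces $\int_{N_1}\Delta f\,dV<0$, the same contradiction.

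There is no substantial obstacle once Theorem~\ref{T:3.3} is in hand; the only point requiring genuine care is to read the Section~2 definition of $\Delta$ correctly as the geometer's positive Laplacian, since it is precisely this sign that makes $\Delta f(p_0)\geq 0$ hold at a maximum (and $\int_{N_1}\Delta f\,dV=0$ on a closed manifold). Reversing that convention would flip the inequalities and break the argument, so pinning it down is the whole subtlety of the proof.
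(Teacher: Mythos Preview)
Your proof is correct and is precisely the standard argument the paper has in mind when it lists this corollary as an ``immediate consequence'' of Theorem~\ref{T:3.3}: specialize \eqref{E:1.1} to $H^2=0$ and $c<0$ to force $\Delta f<0$ on $N_1$, then contradict this via compactness (maximum principle or $\int_{N_1}\Delta f\,dV=0$). Your care with the sign convention of $\Delta$ is exactly the right point to emphasize, and both variants you give are valid.
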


For arbitrary  warped products submanifolds in the complex projective $m$-space $CP^{m}(4c)$ with constant holomorphic sectional curvature $4c$, we have the following results from \cite{c03-2}.

\begin{theorem}\label{T:3.4} Let $\phi:N_1\times_f N_2\to CP^m(4c)$ be an arbitrary isometric immersion of a warped product into the complex projective $m$-space $CP^m(4c)$ of constant holomorphic sectional curvature $4c$. Then we have 
\begin{align} \label{F:1.2}{\Delta f\over{f}}\leq {{(n_1+n_2)^2}\over{4n_2}}H^2+ (3+n_1)c.\end{align} 

The equality sign of \eqref{F:1.2} holds identically if and only if  we have 

\begin{enumerate}

\item  $n_1=n_2=1$,  

\item  $f$ is an eigenfunction of the Laplacian of $N_1$ with eigenvalue $4c$, and  

\item $\phi$ is totally geodesic and holomorphic.
\end{enumerate}
\end{theorem}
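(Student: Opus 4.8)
The plan is to run the scheme behind Theorem~\ref{T:3.3}, the only essential difference being that for $CP^m(4c)$ the holomorphic curvature term carries the sign $c>0$ and must therefore be estimated \emph{from above} rather than simply discarded. First I would record the intrinsic identity forced by the warped metric $g_1+f^2g_2$. Choosing a local orthonormal frame $e_1,\dots,e_{n_1}$ tangent to $N_1$ and $e_{n_1+1},\dots,e_n$ tangent to $N_2$, with $n=n_1+n_2$, the standard warped-product curvature formulas (cf. \cite{bishop}) give, for every unit $V$ tangent to $N_2$,
\[
\sum_{i=1}^{n_1}K(e_i\wedge V)=\frac{\Delta f}{f},
\]
because $\sum_i H^f(e_i,e_i)=-\Delta f$ for the Hessian $H^f$ of $f$ on $N_1$ and the normalization of the Laplacian fixed in Section~2. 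Summing over $V=e_{n_1+1},\dots,e_n$ yields the starting relation $n_2\,\Delta f/f=\sum_{i\le n_1<j}K(e_i\wedge e_j)$.

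Next I would pass from intrinsic to extrinsic curvature through the Gauss equation \eqref{2.4} and insert the complex-space-form curvature, for which an orthonormal pair has ambient sectional curvature $\tilde K(e_i\wedge e_j)=c(1+3\langle Je_i,e_j\rangle^2)$. This converts the starting relation into
\[
n_2\frac{\Delta f}{f}=c\,n_1n_2+3c\!\!\sum_{i\le n_1<j}\!\!\langle Je_i,e_j\rangle^2+\!\!\sum_{i\le n_1<j}\!\!\bigl\{\langle\sigma(e_i,e_i),\sigma(e_j,e_j)\rangle-|\sigma(e_i,e_j)|^2\bigr\}.
\]
Two elementary estimates close the inequality. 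Writing $n^2H^2=\sum_r(\sum_{i=1}^n\sigma^r_{ii})^2$ and applying $ab\le\tfrac14(a+b)^2$ with $a=\sum_{i\le n_1}\sigma^r_{ii}$ and $b=\sum_{j>n_1}\sigma^r_{jj}$ bounds the $\langle\sigma(e_i,e_i),\sigma(e_j,e_j)\rangle$-sum by $\tfrac{n^2}{4}H^2$, while $-\sum|\sigma(e_i,e_j)|^2\le0$. For the holomorphic term I would use $\sum_{i\le n_1<j}\langle Je_i,e_j\rangle^2=\sum_{j>n_1}|\mathrm{pr}_{\mathcal D_1}Je_j|^2\le\sum_{j>n_1}|Je_j|^2=n_2$. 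Substituting and dividing by $n_2$ gives exactly \eqref{F:1.2}, the extra summand $3c$ being the residue of the positive holomorphic term.

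For the equality discussion I would first read off, from these three estimates, that equality in \eqref{F:1.2} forces simultaneously: (a) $\sigma(e_i,e_j)=0$ for $i\le n_1<j$, i.e. $\phi$ is mixed totally geodesic; (b) $\mathrm{trace}\,h_1=\mathrm{trace}\,h_2$; and (c) the holomorphic bound is saturated, i.e. $J\mathcal D_2\subseteq\mathcal D_1$. This is precisely the mirror image of Theorem~\ref{T:3.3}, where $c<0$ drove the same term to the opposite extreme $J\mathcal D_1\perp\mathcal D_2$.

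The main obstacle is to upgrade (a)--(c) to the rigid conclusion $n_1=n_2=1$ with $\phi$ totally geodesic and holomorphic, since (a)--(c) alone are consistent with higher dimensions. I expect this step to require feeding (a)--(c) back into the pointwise identity together with the Codazzi equation: because $K(X\wedge V)=-H^f(X,X)/f$ is independent of the unit direction $V\in\mathcal D_2$, comparing the values it produces for $V=e_j$ and $V=e_k$ via the Gauss equation yields relations of the form $3c=\langle\sigma(Je_j,Je_j),\sigma(e_k,e_k)-\sigma(e_j,e_j)\rangle$ among the second fundamental form of the fibers, which, combined with $J\mathcal D_2\subseteq\mathcal D_1$ and the warped-product connection formulas, should be incompatible with $\min(n_1,n_2)\ge2$; establishing this dimension collapse is the genuinely delicate part of the argument. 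Once $n_1=n_2=1$ is in hand the rest is immediate: condition (c) forces $Je_2=\pm e_1$, so $J$ preserves $TM$ and $\phi$ is holomorphic; a complex submanifold of a Kaehler manifold is minimal, so $H=0$, and with (a)--(b) this makes $\phi$ totally geodesic; finally \eqref{F:1.2} collapses to the equality $\Delta f/f=4c$, i.e. $f$ is an eigenfunction of the Laplacian on $N_1$ with eigenvalue $4c$, recovering conditions (1)--(3).
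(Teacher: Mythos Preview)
This survey does not contain a proof of Theorem~\ref{T:3.4}; the result is quoted from \cite{c03-2}. There is therefore no in-paper argument to compare against, and I assess your proposal on its own merits.

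Your derivation of inequality \eqref{F:1.2} is correct and follows precisely the template behind Theorem~\ref{T:3.3}: the warped-product identity $n_2\,\Delta f/f=\sum_{i\le n_1<j}K(e_i\wedge e_j)$, the Gauss equation with the $CP^m(4c)$ curvature tensor $\tilde K(e_i\wedge e_j)=c(1+3\langle Je_i,e_j\rangle^2)$, the bound $\sum_r a_rb_r\le\tfrac14\sum_r(a_r+b_r)^2=\tfrac{n^2}{4}H^2$, and the estimate $\sum_{j>n_1}|\mathrm{pr}_{\mathcal D_1}Je_j|^2\le n_2$ assemble exactly into \eqref{F:1.2}. The pointwise equality conditions you extract --- (a) mixed totally geodesic, (b) $\mathrm{trace}\,h_1=\mathrm{trace}\,h_2$, (c) $J\mathcal D_2\subseteq\mathcal D_1$ --- are also correct, as is the endgame once $n_1=n_2=1$ is known (holomorphic $\Rightarrow$ minimal, then (a)--(b) force $\sigma=0$, and the equality reads $\Delta f=4cf$).

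The genuine gap is the dimension collapse, which you yourself flag as ``the genuinely delicate part'' and leave at the level of an expectation. The single relation you derive, $3c=\langle\sigma(Je_j,Je_j),\sigma(e_k,e_k)-\sigma(e_j,e_j)\rangle$ for $j\ne k$ in $\mathcal D_2$, is correct but nowhere near a contradiction on its own: it is perfectly consistent with nontrivial second fundamental form on the fibers, and nothing you have written rules out $n_2\ge2$ or (separately) $n_1\ge2$. The argument in \cite{c03-2} requires substantially more than Gauss alone --- one has to bring in the Kaehler condition $\tilde\nabla J=0$ together with the warped-product connection to control how $\sigma$ behaves on the $J$-invariant subspace $\mathcal D_2\oplus J\mathcal D_2\subseteq TM$, and then play this off against the explicit curvature of $CP^m(4c)$. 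As written, your proposal establishes the inequality and correctly isolates the pointwise equality conditions, but the passage from (a)--(c) to statements (1)--(3) of the theorem is not proved.
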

 
An immediate  application of Theorem \ref{T:3.4} is the following non-immersion result.

\begin{corollary} \label{C:3.4}  If $f$ is a positive function on a Riemannian $n_1$-manifold $N_1$ such that $(\Delta f)/f>3+n_1$ at some point  $p\in N_1$, then, for any Riemannian manifold $N_2$, the warped product  $N_1\times_f N_2$ does not admit any isometric minimal immersion into  $CP^m(4)$ for any $m$.
\end{corollary}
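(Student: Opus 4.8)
The plan is to argue by contradiction and feed the minimality hypothesis directly into the sharp inequality of Theorem~\ref{T:3.4}. First I would suppose, contrary to the assertion, that for some $m$ there exists an isometric minimal immersion $\phi:N_1\times_f N_2\to CP^m(4)$. The first thing to pin down is the curvature normalization: in the notation $CP^m(4c)$ of Theorem~\ref{T:3.4}, the target $CP^m(4)$ corresponds to $4c=4$, hence $c=1$. With this value, the estimate \eqref{F:1.2} specializes to
\begin{equation*}
\frac{\Delta f}{f}\leq \frac{(n_1+n_2)^2}{4n_2}H^2+(3+n_1)
\end{equation*}
at every point of $N_1$, where $\Delta f$ is the Laplacian of $f$ on $N_1$.

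Next I would invoke minimality. By definition the mean curvature vector $\overrightarrow{H}$ vanishes identically, so $H^2=\<\overrightarrow{H},\overrightarrow{H}\>=0$ on all of $N_1\times_f N_2$. Substituting $H^2=0$ into the displayed inequality collapses it to the pointwise bound
\begin{equation*}
\frac{\Delta f}{f}\leq 3+n_1,
\end{equation*}
valid everywhere on $N_1$, and in particular at the distinguished point $p$.

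Finally I would read off the contradiction: the hypothesis of the corollary asserts $(\Delta f)/f>3+n_1$ precisely at $p$, which is incompatible with the bound just derived. Hence no such minimal immersion can exist, for any $m$, which is the claim.

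I do not anticipate a genuine obstacle here, since the corollary is an immediate specialization of Theorem~\ref{T:3.4}; the only point requiring care is the curvature normalization, namely correctly identifying $CP^m(4)$ with the choice $c=1$ so that the constant term $(3+n_1)c$ becomes exactly $3+n_1$ and matches the threshold in the hypothesis. I would also note that the argument uses only the inequality \eqref{F:1.2}, not its equality case, so the sharper characterization in Theorem~\ref{T:3.4} plays no role in this non-existence conclusion.
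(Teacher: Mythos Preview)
Your argument is correct and matches the paper's approach: the paper presents Corollary~\ref{C:3.4} as ``an immediate application of Theorem~\ref{T:3.4}'' without further detail, and your specialization $c=1$, $H^2=0$ in \eqref{F:1.2} is exactly the intended one-line deduction.
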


Theorem \ref{T:3.4} can be sharpen as the following theorem for totally real minimal immersions.

\begin{theorem} \label{T:3.5}  If $f$ is a positive function on a Riemannian $n_1$-manifold $N_1$ such that $(\Delta f)/f>n_1$ at some point  $p\in N_1$, then, for any Riemannian manifold $N_2$, the warped product  $N_1\times_f N_2$ does not admit any isometric totally real minimal immersion into  $CP^m(4)$ for any $m$.\end{theorem}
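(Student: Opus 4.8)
The plan is to follow the Gauss-equation scheme behind Theorems~\ref{T:3.1} and \ref{T:3.4}, exploiting the totally real hypothesis exactly where the ambient curvature is inserted. First I would fix a local orthonormal frame $e_1,\dots,e_{n_1}$ tangent to $N_1$ and $e_{n_1+1},\dots,e_n$ tangent to $N_2$ (so $n=n_1+n_2$) and record the standard warped-product curvature identity: for the metric $g=g_1+f^2g_2$, O'Neill's formula gives, for each unit $e_s$ tangent to $N_2$,
\begin{equation*} \sum_{i=1}^{n_1}K(e_i\wedge e_s)=\frac{\Delta f}{f}, \end{equation*}
where $\Delta$ is the Laplacian on $N_1$ in the sign convention of Section~2. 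Summing over $s=n_1+1,\dots,n$ turns the left side into $\sum_{i=1}^{n_1}\sum_{s=n_1+1}^{n}K(e_i\wedge e_s)$ and the right side into $n_2\,\Delta f/f$, so it suffices to bound this double sum of mixed sectional curvatures from above by $n_1n_2$.

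Next I would substitute the Gauss equation \eqref{2.4} into each mixed term, replacing $K(e_i\wedge e_s)$ by $\tilde K(e_i\wedge e_s)+\langle\sigma(e_i,e_i),\sigma(e_s,e_s)\rangle-|\sigma(e_i,e_s)|^2$. Here the totally real hypothesis is decisive. In $CP^m(4)$ the sectional curvature of a plane spanned by orthonormal $X,Y$ equals $\tilde K(X\wedge Y)=1+3\langle JX,Y\rangle^2$; since a totally real immersion sends $TM$ into the normal bundle under $J$, one has $\langle Je_i,e_s\rangle=0$ for every mixed pair, whence $\tilde K(e_i\wedge e_s)=1$ and $\sum_{i,s}\tilde K(e_i\wedge e_s)=n_1n_2$. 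This is the entire source of the sharpening: in Theorem~\ref{T:3.4} the terms $\langle Je_i,e_s\rangle$ need not vanish and, after division by $n_2$, contribute the additional $3$ that appears in the bound $3+n_1$.

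It then remains to discard the second-fundamental-form contribution using minimality. With $\overrightarrow{H}_1=\sum_{i=1}^{n_1}\sigma(e_i,e_i)$ and $\overrightarrow{H}_2=\sum_{s=n_1+1}^{n}\sigma(e_s,e_s)$, bilinearity gives $\sum_{i,s}\langle\sigma(e_i,e_i),\sigma(e_s,e_s)\rangle=\langle\overrightarrow{H}_1,\overrightarrow{H}_2\rangle$. Minimality means $\overrightarrow{H}_1+\overrightarrow{H}_2=n\overrightarrow{H}=0$, so $\langle\overrightarrow{H}_1,\overrightarrow{H}_2\rangle=-|\overrightarrow{H}_1|^2\le0$, while $-\sum_{i,s}|\sigma(e_i,e_s)|^2\le0$ trivially. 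Collecting terms yields $n_2\,\Delta f/f\le n_1n_2$, i.e. $\Delta f/f\le n_1$ at every point of $N_1$. This contradicts the hypothesis $(\Delta f)/f>n_1$ at $p$, so no totally real minimal immersion into $CP^m(4)$ can exist, for any $m$.

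I do not expect a genuine obstacle, as the argument is forced once the warped-product identity is in place; the care lies in the bookkeeping. The two points I would check most carefully are the sign convention in the warped-product Hessian identity, so that it is consistent with the Laplacian of Section~2 (the presentation $\mathbb{H}^n=\mathbb{R}\times_{e^t}\mathbb{R}^{n-1}$, for which $\Delta f/f$ must equal $-1$, is a convenient test), and the formula $\tilde K(X\wedge Y)=1+3\langle JX,Y\rangle^2$, which I would derive by contracting the standard curvature tensor of a complex space form of holomorphic sectional curvature $4$. It is worth noting that minimality lets me bypass the Cauchy--Schwarz optimization that in Theorem~\ref{T:3.4} produces the $\tfrac{(n_1+n_2)^2}{4n_2}H^2$ term, since here $H^2=0$ from the outset.
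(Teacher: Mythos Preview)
Your argument is correct and is precisely the Gauss-equation scheme that underlies Theorems~\ref{T:3.1}--\ref{T:3.4}: the paper, being a survey, does not spell out a separate proof of Theorem~\ref{T:3.5} but presents it as the sharpening of Theorem~\ref{T:3.4} obtained when the terms $\langle Je_i,e_s\rangle$ vanish, exactly as you isolate. Your handling of the sign convention for $\Delta$, the $CP^m(4)$ curvature formula, and the minimality step $\langle\overrightarrow{H}_1,\overrightarrow{H}_2\rangle=-|\overrightarrow{H}_1|^2\le 0$ are all in order.
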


The following examples illustrate  that Theorems \ref{T:3.3}, \ref{T:3.4} and \ref{T:3.5} are sharp.

\begin{example} {\rm Let $I=(-\frac{\pi}{4},\frac{\pi}{4}),\, N_2=S^1(1)$ and $f=\frac{1}{2}\cos 2 s$. Then the warped product
$$N_1\times_f N_2=: I\times_{(\cos 2 s)/2}S^1 (1)$$ has constant sectional curvature 4. Clearly, we have $(\Delta f)/f=4$. If we define the complex structure $J$ on the warped product by
$J\left({\partial\over {\partial s}}\right)=2(\sec 2 s) {\partial\over {\partial t}},$
 then $\,(I\times_{(\cos 2 s)/2}S^1(1),g,J)\,$ is  holomorphically isometric to a dense open subset of $CP^1(4)$.

Let $\phi:CP^1(4)\to CP^m(4)$ be a standard totally geodesic embedding of $CP^1(4)$ into $CP^m(4)$. Then the restriction of $\phi$ to $I\times_{(\cos 2 s)/2}S^1(1)$ gives rise to a minimal isometric immersion of $I\times_{(\cos 2 s)/2}
S^1$ into $CP^m(4)$ which  satisfies the equality case of inequality  \eqref{F:1.2} on $I\times_{(\cos 2 s)/2} S^1(1)$  identically. }
\end{example}

\begin{example} {\rm Consider the same warped product $N_1\times_f N_2=I\times_{(\cos 2 s)/2}S^1(1)$ as given in Example 5.1. Let $\phi:CP^1(4)\to CP^m(4)$ be the  totally geodesic holomorphic embedding of $CP^1(4)$ into $CP^m(4)$. Then the restriction of $\phi$ to $N_1\times_f N_2$ is an isometric minimal immersion of $N_1\times_f N_2$ into $CP^m(4)$ which satisfies $(\Delta f)/f=3+n_1$ identically. 
This example shows that the assumption ``$(\Delta f)/f>3+n_1$ at some point in $N_1$'' given in Theorem \ref{T:3.4} is best possible.}\end{example}

\begin{example} {\rm Let  $g_1$ be the standard metric on $S^{n-1}(1)$. Denote by $N_1\times_f N_2$ the warped product given by $N_1=(-\pi/2,\pi/2),\,N_2=S^{n-1}(1)$ and $f=\cos s$. Then the warping function of this warped product satisfies $\Delta f/{f}=n_1$
identically. Moreover, it is easy to verify that this warped product is isometric to a dense open subset of $S^n$.
Let \begin{align}\phi\,:\, & S^n(1)\xrightarrow[\text{2:1}]{\text{projection}}
RP^{n}(4) \xrightarrow[\text{totally real}]{\text{totally geodesic}}  CP^n(4)\notag\end{align}
be a standard totally geodesic Lagrangian immersion of $S^n(1)$ into $CP^n(4)$. Then the restriction of $\phi$ to $N_1\times_f N_2$ is a totally real minimal
immersion.
This example illustrates that the assumption ``$(\Delta f)/f>n_1$ at some point in $N_1$'' given in Theorem \ref{T:3.5} is also sharp.}\end{example}

\section{Segre imbedding and its converse}

For simplicity, we denote $S^{n}(1),\, RP^{n}(1)$, $CP^{n}(4)$ and $ CH^{n}(-4)$  by $S^{n},\, RP^{n}$, $CP^{n}$ and $CH^{n}$, respectively.

Let $(z_0^i,\ldots,z_{\alpha_i}^i),$ $1\leq i\leq s,$ be homogeneous coordinates of $CP^{\alpha_i}$. Define a map:
\begin{equation}S_{\alpha_1\cdots \alpha_s}:CP^{\alpha_1}\times\cdots\times CP^{\alpha_s}\to CP^N,\quad N=\prod_{i=1}^s (\alpha_i+1)-1,\end{equation} 
which maps a point $((z_0^1,\ldots,z_{\alpha_1}^1),\ldots,(z_0^s,\ldots,z_{\alpha_s}^s))$ in $CP^{\alpha_1}\times\cdots\times CP^{\alpha_s}$ to the point $(z^1_{i_1}\cdots z^s_{i_j})_{1\leq i_1\leq \alpha_1,\ldots,1\leq i_s\leq \alpha_s}$ in $CP^N$. The map $S_{\alpha_1\cdots \alpha_s}$ is a Kaehler embedding which is known as the {\it Segre embedding}. The Segre embedding was constructed by C. Segre in 1891.  
 
The following results from \cite{c2,CK} established in 1981 can be regarded as the ``converse'' to Segre embedding constructed in 1891.  

\begin{theorem}\label{T:4.1} Let $M_1^{\alpha_1},\ldots,M_s^{\alpha_s}$ be Kaehler manifolds of
dimensions $\alpha_1,\ldots,$ $\alpha_s$, respectively. Then every holomorphically isometric immersion
$$f:M_1^{\alpha_1}\times\cdots\times M_s^{\alpha_s}\to CP^N,\quad N=\prod_{i=1}^s (\alpha_i+1)-1,$$ 
of $M_1^{\alpha_1}\times\cdots\times M_s^{\alpha_s}$ into $CP^N$   is locally  the Segre embedding, i.e., $M_1^{\alpha_1},\ldots,M_s^{\alpha_s}$ are open portions of $CP^{\alpha_1},\ldots, CP^{\alpha_s}$, respectively. Moreover,  $f$ is congruent to
the Segre embedding.\end{theorem}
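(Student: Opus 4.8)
The plan is to reduce the statement to Calabi's rigidity theorem for holomorphic isometric immersions into complex projective space, exploiting the additivity of Calabi's diastasis function under Riemannian products. The Segre embedding is distinguished among all realizations of the product metric by the fact that it uses exactly the tensor-product coordinates, so the whole issue is a dimension count for how many holomorphic functions are forced to appear.

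First I would recall Calabi's criterion and rigidity. A real-analytic Kaehler manifold $(M,g)$ admits a local holomorphic isometric immersion into $CP^N(4)$, carrying a base point $p_0$ to the origin of an affine chart, if and only if the diastasis $D_{p_0}$ of $g$ satisfies $e^{D_{p_0}}-1=\sum_{k=1}^N|\phi_k|^2$ for holomorphic functions $\phi_1,\dots,\phi_N$ vanishing at $p_0$; the least such $N$ is an invariant of the metric, and any two full immersions inducing the same metric are congruent under a holomorphic isometry of $CP^N$. Applying this to our immersion $f=(f_1,\dots,f_N)$ (affine coordinates) gives $e^{D^M_{p_0}}-1=\sum_{k=1}^N|f_k|^2$, where $D^M$ is the diastasis of the product metric.

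Next I would use additivity of the diastasis over Riemannian products: writing $p_0=(p_1,\dots,p_s)$, the product Kaehler potential is the sum of the factor potentials, so $D^M_{p_0}=\sum_{i=1}^s D^{M_i}_{p_i}$. Restricting $e^{D^M_{p_0}}-1$ to the slice $\{p_1\}\times\cdots\times M_i\times\cdots\times\{p_s\}$, on which every $D^{M_j}_{p_j}$ with $j\neq i$ vanishes, exhibits $e^{D^{M_i}_{p_i}}-1$ as a sum of at most $N$ squared moduli of holomorphic functions on $M_i$. Hence each factor $(M_i^{\alpha_i},g_i)$ admits a full holomorphic isometric immersion into some $CP^{N_i}(4)$ with least dimension $N_i\geq\alpha_i$. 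Conversely, multiplying the minimal factor expansions $e^{D^{M_i}_{p_i}}=1+\sum_{k=1}^{N_i}|g^i_k|^2$ and invoking additivity yields $e^{D^M_{p_0}}-1=\sum_{J\neq\emptyset}|h_J|^2$, where the $h_J=\prod_i g^i_{k_i}$ are the $\prod_i(N_i+1)-1$ nonconstant tensor products. Since each system $\{1,g^i_1,\dots,g^i_{N_i}\}$ is linearly independent on its factor, the $h_J$ together with the constant are linearly independent on $M$, so this representation is minimal. Therefore the least dimension realizing $e^{D^M_{p_0}}-1$ equals $\prod_i(N_i+1)-1$, whence $\prod_i(N_i+1)-1\leq N$; comparing with the hypothesis $N=\prod_i(\alpha_i+1)-1$ and using $N_i\geq\alpha_i$ forces $N_i=\alpha_i$ for every $i$ (and shows, incidentally, that $f$ is full).

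The main obstacle, and the crux of the argument, is exactly this dimension bookkeeping: one must verify that the per-slice expansions are genuinely minimal for each factor and that their tensor products stay linearly independent, so that the term counts can be equated through Calabi's rigidity rather than merely bounded. Once $N_i=\alpha_i$ is secured the conclusion is immediate. A full holomorphic isometric immersion of the $\alpha_i$-dimensional Kaehler manifold $M_i$ into the equidimensional $CP^{\alpha_i}(4)$ is a local biholomorphism and a local isometry, so each $M_i$ is an open portion of $CP^{\alpha_i}$; and since the Segre embedding $S_{\alpha_1\cdots\alpha_s}$ realizes the product metric precisely through the tensor-product coordinates $h_J$, Calabi's rigidity identifies $f$, up to a holomorphic isometry of $CP^N$, with $S_{\alpha_1\cdots\alpha_s}$. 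This establishes both assertions of the theorem.
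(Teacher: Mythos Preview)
The paper is a survey and does not supply a proof of this theorem; it merely cites the result from the author's earlier papers \cite{c2,CK}. So there is no in-text argument to compare your proposal against.

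That said, your route through Calabi's diastasis and rigidity is correct and is in fact the standard way this theorem is established. The essential steps---additivity of the diastasis under Kaehler products, the tensor-product expansion of $e^{D^M}-1$ yielding exactly $\prod_i(N_i+1)-1$ linearly independent holomorphic functions, the dimension squeeze $\prod_i(N_i+1)-1\le N=\prod_i(\alpha_i+1)-1$ against $N_i\ge\alpha_i$ forcing $N_i=\alpha_i$, and the concluding appeal to Calabi rigidity---are all sound. Two small points you might make explicit in a polished write-up: first, when you restrict the $f_k$ to a slice to obtain an expansion of $e^{D^{M_i}}-1$, the restricted functions need not remain linearly independent, but you only use this to conclude that $M_i$ admits \emph{some} holomorphic isometric immersion into projective space and hence has a finite Calabi rank $N_i$; second, the inequality $N_i\ge\alpha_i$ follows simply because a full holomorphic immersion of a complex $\alpha_i$-manifold into $CP^{N_i}$ requires $N_i\ge\alpha_i$. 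With those in place your argument is complete.
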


Let $\bar \nabla^k\sigma$, $k=0,1,2,\cdots$, denote the $k$-th covariant derivative of the second fundamental form. Denoted by $||\bar\nabla^k\sigma||^2$ the squared norm of $\bar \nabla^k\sigma$. 

The following result was proved in  \cite{CK}.
 
\begin{theorem}\label{T:4.2} Let
$M_1^{\alpha_1}\times\cdots\times M_s^{\alpha_s}$ be a product Kaehler submanifold of  $CP^N$. Then 
\begin{equation}\label{4.2} ||\bar \nabla^{k-2}\sigma||^2\geq k!\,2^k\sum_{i_1<\cdots<i_k}\alpha_1\cdots\alpha_k,\end{equation} for $k=2,3,\cdots.$

The equality sign of \eqref{4.2}  holds for some $k$ if and only if   $M_1^{\alpha_1},\ldots,M_s^{\alpha_s}$ are open parts of $CP^{\alpha_1},\ldots, CP^{\alpha_s}$, respectively, and the immersion is congruent to the Segre embedding.\end{theorem}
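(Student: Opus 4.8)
The plan is to argue by induction on $k$, controlling $\bar\nabla^{k-2}\sigma$ componentwise with respect to the product decomposition. Write $TM=T_1\oplus\cdots\oplus T_s$ with $T_i=TM_i$; since $M=M_1^{\alpha_1}\times\cdots\times M_s^{\alpha_s}$ is a Kaehler product submanifold, each $T_i$ is $J$-invariant, the Levi-Civita connection satisfies $\nabla_XY=0$ whenever $X\in T_i$ and $Y\in T_j$ with $i\ne j$, and the holomorphy of the immersion gives $\sigma(JX,Y)=J\sigma(X,Y)$, hence $\sigma(JX,JY)=-\sigma(X,Y)$. A preliminary observation I would record is that, because the tangent space is $J$-invariant, the curvature tensor $\tilde R$ of $CP^N(4)$ sends tangent triples to tangent vectors, so the Codazzi equation reduces to $(\bar\nabla_X\sigma)(Y,Z)=(\bar\nabla_Y\sigma)(X,Z)$; iterating, the ``leading'' part of $\bar\nabla^{k-2}\sigma$ is totally symmetric in its $k$ arguments, which is what makes the combinatorial count below clean.

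For the base case $k=2$ I would compute $|\sigma(X,Y)|^2$ for unit vectors $X\in T_i$, $Y\in T_j$ with $i\ne j$. Because $M$ is a Riemannian product the mixed plane $X\w Y$ has vanishing sectional curvature in $M$, while $JX\perp Y$ forces the ambient sectional curvature of $X\w Y$ to equal $1$; the Gauss equation \eqref{2.5} then gives $|\sigma(X,Y)|^2=1+\<\sigma(X,X),\sigma(Y,Y)\>$. Running the same computation with $X$ replaced by $JX$ and using $\sigma(JX,JX)=-\sigma(X,X)$ together with $|\sigma(JX,Y)|=|\sigma(X,Y)|$ eliminates the cross term and yields $|\sigma(X,Y)|^2=1$ identically. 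Summing $|\sigma(e_a,e_b)|^2$ over an adapted orthonormal frame, the pairs lying in two distinct factors contribute exactly $\sum_{i\ne j}(2\alpha_i)(2\alpha_j)=8\sum_{i<j}\alpha_i\alpha_j=2!\,2^2\sum_{i_1<i_2}\alpha_{i_1}\alpha_{i_2}$, while the same-factor pairs contribute a nonnegative remainder; this is \eqref{4.2} for $k=2$, with equality precisely when $\sigma$ vanishes on each $T_i\times T_i$.

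For the inductive step I would evaluate $(\bar\nabla^{k-2}\sigma)(X_1,\dots,X_k)$ on unit vectors drawn from $k$ distinct factors. Differentiating in a ``fresh'' factor direction $X_k$, every connection-correction term $\sigma(\dots,\nabla_{X_k}X_a,\dots)$ vanishes by the product structure, so the derivative reduces to $D_{X_k}$ applied to the normal vector $(\bar\nabla^{k-3}\sigma)(X_1,\dots,X_{k-1})$ produced at the previous stage. The key claim to establish is that this again has squared norm exactly $1$ for any product Kaehler submanifold; granting it, the $k$-tuples whose indices occupy $k$ distinct factors contribute $k!\,(2\alpha_{i_1})\cdots(2\alpha_{i_k})$ for each $k$-subset (the $k!$ from the total symmetry of the arguments), summing to $k!\,2^k\sum_{i_1<\cdots<i_k}\alpha_{i_1}\cdots\alpha_{i_k}$, and all remaining tuples contribute a nonnegative remainder, giving \eqref{4.2}.

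The hard part will be the key claim in the inductive step: that the distinct-factor component of $\bar\nabla^{k-2}\sigma$ has unit length for every product Kaehler submanifold, not merely for the Segre embedding. This forces one to track the normal vector through repeated covariant differentiation, expanding $\tilde\nabla_{X_k}$ via Gauss--Weingarten and invoking that $CP^N(4)$ is a symmetric space, so that $\tilde R$ is parallel and, on tangent triples from distinct factors, vanishes, allowing one to discard the tangential and curvature corrections; carrying the exact constant $1$ through each stage is the crux of the argument. Finally, equality in \eqref{4.2} for a given $k$ forces every ``degenerate'' (non-distinct-factor) component of $\bar\nabla^{k-2}\sigma$ to vanish, whence each $M_i^{\alpha_i}$ is totally geodesic in $CP^N$ and therefore an open part of $CP^{\alpha_i}$; the holomorphically isometric immersion of the product is then congruent to the Segre embedding by Theorem \ref{T:4.1}.
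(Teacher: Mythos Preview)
The paper is a survey and does not supply a proof of Theorem~\ref{T:4.2}; it simply records the statement with a citation to \cite{CK}, where the original argument lives. So there is no in-paper proof to compare against.

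Assessing your proposal on its own terms: the base case $k=2$ is clean and correct---the $J$-trick that kills the cross term $\<\sigma(X,X),\sigma(Y,Y)\>$ and forces $|\sigma(X,Y)|^2=1$ for $X\in T_i$, $Y\in T_j$, $i\ne j$, is exactly the mechanism behind Theorem~\ref{T:4.3}. The inductive architecture is also the natural one. However, two points are genuine gaps rather than details. First, the ``key claim'' that the distinct-factor component of $\bar\nabla^{k-2}\sigma$ has unit length for \emph{every} product Kaehler submanifold of $CP^N$ is precisely the heart of the theorem, and you have only named it, not proved it; reducing $(\bar\nabla^{k-2}\sigma)(X_1,\dots,X_k)$ to $D_{X_k}$ of the previous normal vector is right, but showing $|D_{X_k}\xi|^2=1$ for that specific $\xi$ requires a concrete computation (typically via Gauss--Weingarten together with the explicit form of the curvature of $CP^N$), and the constant has to survive each stage. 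Second, your equality argument for $k>2$ is incomplete: vanishing of the ``degenerate'' components of $\bar\nabla^{k-2}\sigma$ does not by itself force $\sigma|_{T_i\times T_i}=0$; you still need to pass from information about the $(k-2)$-nd covariant derivative back down to $\sigma$, which requires an extra step (for instance, iterated use of Codazzi/Ricci identities or an argument that the remainder already dominates $||\sigma|_{T_i\times T_i}||^2$). Until those two points are filled in, the proposal is a reasonable outline but not yet a proof.
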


If $k=2$, Theorem \ref{T:4.2} reduces to the following result of \cite{c2}.

\begin{theorem}\label{T:4.3} Let $M_1^{h}\times M_2^{p}$ be a product Kaehler submanifold of  $CP^N$. Then we have
\begin{equation}\label{4.3} ||\sigma||^2\geq 8hp.\end{equation}

The equality sign of inequality \eqref{4.3}  holds if and only if  $M_1^h$ and $M_2^p$ are open portions of $CP^{h}$ and $CP^{p}$, respectively, and mreover the immersion is congruent to the Segre embedding $S_{h,p}$.
\end{theorem}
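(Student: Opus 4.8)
The plan is to obtain \eqref{4.3} as the special case $k=2$ of Theorem \ref{T:4.2}, so the core task is to justify the bound $\|\sigma\|^2 \geq 8hp$ directly, in a way that also exposes the equality condition. First I would set up the adapted frame. Since $M_1^h \times M_2^p$ is a Kaehler submanifold of $CP^N$, I would choose orthonormal frames $e_1,\ldots,e_h$ tangent to $M_1^h$ and $e_{h+1},\ldots,e_{h+p}$ tangent to $M_2^p$, arranged so that $J$ acts within each block (each factor being a complex submanifold, its tangent space is $J$-invariant). The goal is to estimate $\|\sigma\|^2 = \sum_{i,j}\|\sigma(e_i,e_j)\|^2$ from below by a multiple of $hp$, and the natural strategy is to show that the \emph{mixed} components $\sigma(e_a,e_\alpha)$, with $e_a$ tangent to the first factor and $e_\alpha$ tangent to the second, are forced to be nonzero and to carry at least the stated amount of norm.

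The key mechanism is the product structure interacting with the curvature of $CP^N$. Because the metric is a Riemannian product, the sectional curvature $K(e_a \wedge e_\alpha)$ of a mixed plane vanishes on $M_1^h \times M_2^p$. I would feed this into the Gauss equation \eqref{2.5} for the ambient complex projective space. For a complex-space-form of constant holomorphic sectional curvature $4$, the curvature tensor has the standard form involving both the real and the holomorphic (the $J$-coupled) terms; evaluating $R(e_a,e_\alpha;e_\alpha,e_a) = 0$ against this ambient tensor yields a relation of the shape
\begin{equation}\label{E:plan-gauss}
0 = c_0 + \langle \sigma(e_a,e_a),\sigma(e_\alpha,e_\alpha)\rangle - \|\sigma(e_a,e_\alpha)\|^2,
\end{equation}
where $c_0$ collects the ambient contributions, including a term coming from $\langle Je_a, e_\alpha\rangle$-type couplings. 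For the Kaehler (holomorphic) submanifold one uses the identity $\sigma(JX,Y) = J\sigma(X,Y) = \sigma(X,JY)$, which constrains the mixed second fundamental form and, combined with \eqref{E:plan-gauss}, forces a strictly positive lower bound on each $\|\sigma(e_a,e_\alpha)\|^2$ for the $J$-paired directions.

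From there I would sum \eqref{E:plan-gauss} over $a = 1,\ldots,h$ and $\alpha = h+1,\ldots,h+p$. Summing the ambient constant contributions over all $hp$ mixed pairs produces the factor $8hp$ once the normalization of $CP^N$ (holomorphic sectional curvature $4$) is inserted and the holomorphic coupling terms are accounted for; the trace term $\sum_a \sum_\alpha \langle \sigma(e_a,e_a),\sigma(e_\alpha,e_\alpha)\rangle$ assembles into $\langle \mathrm{trace}\,h_1,\mathrm{trace}\,h_2\rangle$, which vanishes because a Kaehler submanifold of a Kaehler manifold is automatically minimal (so both partial traces are zero). This collapses the inequality to $\|\sigma\|^2 \geq \sum_{a,\alpha}\|\sigma(e_a,e_\alpha)\|^2 \geq 8hp$, using that $\|\sigma\|^2$ dominates its mixed part.

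For the equality discussion, equality in $\|\sigma\|^2 \geq \sum_{a,\alpha}\|\sigma(e_a,e_\alpha)\|^2$ forces all \emph{pure} components $\sigma(e_a,e_b)$ and $\sigma(e_\alpha,e_\beta)$ to vanish, so each factor is totally geodesic in $CP^N$ and hence an open portion of a totally geodesic $CP^h$, respectively $CP^p$; the mixed components are then pinned down exactly, and Theorem \ref{T:4.1} (the converse to the Segre embedding) identifies the immersion, up to congruence, with the Segre embedding $S_{h,p}$. The step I expect to be the main obstacle is the careful bookkeeping in \eqref{E:plan-gauss}: correctly isolating the holomorphic coupling contribution of the $CP^N$ curvature tensor on the $J$-invariant mixed planes, and verifying that the per-pair contribution is precisely $8$ rather than some other constant, since the whole sharp factor $8hp$ hinges on that normalization.
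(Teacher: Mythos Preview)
Your opening sentence already matches the paper exactly: Theorem~\ref{T:4.3} is stated there simply as the $k=2$ instance of Theorem~\ref{T:4.2}, with no separate argument given. So as far as comparison with the paper goes, you are done.

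The direct Gauss-equation argument you sketch beyond that is the right idea (and is essentially how the original reference \cite{c2} proceeds), but your bookkeeping is off in a way that would derail the constant~$8$. The factors $M_1^h$ and $M_2^p$ have \emph{complex} dimensions $h$ and $p$, hence real dimensions $2h$ and $2p$; your frame $e_1,\ldots,e_h$ for $M_1^h$ is half as long as it should be. With the correct real frames, there are $(2h)(2p)=4hp$ mixed pairs $(e_a,e_\alpha)$. Because each factor is $J$-invariant, every mixed plane is totally real, so the ambient sectional curvature of $CP^N(4)$ on such a plane is exactly $1$ (not~$8$). The Gauss equation together with $K(e_a\wedge e_\alpha)=0$ and the vanishing of both partial traces (Kaehler submanifolds are minimal, and in fact $\sigma(JX,JY)=-\sigma(X,Y)$ kills each factor's trace separately) yields
\[
\sum_{a=1}^{2h}\sum_{\alpha=1}^{2p}\|\sigma(e_a,e_\alpha)\|^2 = 4hp.
\]
The missing factor of $2$ then comes not from the curvature normalization but from the symmetry of $\sigma$: in $\|\sigma\|^2=\sum_{i,j}\|\sigma(e_i,e_j)\|^2$ each unordered mixed pair is counted twice, so the mixed block alone contributes $8hp$, and the pure blocks are nonnegative. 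Your equality analysis (pure blocks vanish $\Rightarrow$ both factors totally geodesic $\Rightarrow$ open pieces of $CP^h$, $CP^p$ $\Rightarrow$ Segre via Theorem~\ref{T:4.1}) is correct.
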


We may extend Theorem \ref{T:4.3} to the following for warped products.

\begin{theorem}\label{T:s4} Let $(M_1^{h},g_{1})$ and $(M_2^{p},g_{2})$ be two Kaehler manifolds of complex dimension $h$ and $p$ respectively and let $f$ be a positive function on $M_{1}^{h}$. If $\phi:M_1^{h}\times_{f} M_2^{p}\to CP^{N}$ is a holomorphically isometric immersion of the warped product manifold $M_1^{h}\times_{f} M_2^{p}$ into   $CP^N$. Then $f$ is a constant, say $c$. Moreover, we have
\begin{equation}\label{4.4} ||\sigma||^2\geq 8hp.\end{equation}

The equality sign of \eqref{4.4}  holds if and only if  $(M_1^h,g_{1})$ and $(M_2^p,cg_{2})$ are open portions of $CP^{h}$ and $CP^{p}$, respectively, and moreover the immersion $\phi$ is congruent to the Segre embedding.
\end{theorem}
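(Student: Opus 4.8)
The plan is to reduce the assertion to Theorem~\ref{T:4.3} by first showing that the warping function $f$ is necessarily constant; once that is established the warped product collapses to a Riemannian product of two K\"ahler manifolds, and both the inequality and its equality case become immediate consequences of Theorem~\ref{T:4.3}.

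First I would observe that, because $\phi$ is a holomorphic isometric immersion into the K\"ahler manifold $CP^N$, its image is a complex submanifold, and hence $M:=M_1^h\times_f M_2^p$, carrying the induced metric $g=g_1+f^2 g_2$ and the product complex structure $J=J_1\oplus J_2$, is itself K\"ahler, so $\nabla J=0$. Since $J$ preserves the vertical distribution $\mathcal D^\perp$ tangent to the fibres $\{x\}\times M_2^p$, each fibre is a complex submanifold of $M$ and is therefore minimal. On the other hand, the fibres of any warped product are totally umbilical, with second fundamental form $\sigma^{\mathcal F}(U,V)=-\<U,V\>\,\mathrm{grad}(\ln f)$ for $U,V\in\mathcal D^\perp$. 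A submanifold that is simultaneously totally umbilical and minimal is totally geodesic, so $\sigma^{\mathcal F}\equiv 0$ and hence $\mathrm{grad}\,f=0$; as $M_1^h$ is connected, $f$ is a constant $c$. (Alternatively, evaluating the K\"ahler identity $(\nabla_U J)X=0$ for $U\in\mathcal D^\perp$ and $X\in\mathcal D$ with the standard warped-product connection formulas yields a relation in the independent vertical vectors $U$ and $JU$ whose coefficients force $Xf=0$ for every horizontal $X$.)

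With $f\equiv c$ the warped metric is $g=g_1+c^2 g_2$, so $M$ is the Riemannian product $(M_1^h,g_1)\times(M_2^p,c^2 g_2)$ of two K\"ahler manifolds, holomorphically and isometrically immersed in $CP^N$. I would then apply Theorem~\ref{T:4.3} directly to these two factors to obtain $\|\sigma\|^2\ge 8hp$, and transcribe its equality discussion: equality holds exactly when the two factors are open portions of $CP^h$ and $CP^p$ and $\phi$ is congruent to the Segre embedding $S_{h,p}$, which is precisely the claimed characterization. The genuinely new content, and the point I expect to need the most care, is the constancy of $f$: one must confirm that the structure induced on the warped product is honestly K\"ahler (so that the fibres are minimal) and apply the umbilicity formula for warped-product fibres correctly. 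Everything past $f\equiv c$ is a verbatim invocation of Theorem~\ref{T:4.3}.
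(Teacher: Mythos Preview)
Your proposal is correct and follows essentially the same approach as the paper: the paper's own proof simply observes that under the hypothesis the warped product must be K\"ahler, hence the warping function is constant, and then invokes Theorem~\ref{T:4.3}. Your write-up actually supplies more detail than the paper does on why K\"ahlerity forces $f$ to be constant (via the fibres being simultaneously complex/minimal and totally umbilical), but the strategy is identical.
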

\begin{proof} Under the hypothesis, the warped product manifold $M_1^{h}\times_{f} M_2^{p}$ must be  a Kaehler manifold. Therefore, the warping function $f$ must be a positive constant. Consequently, the theorem follows from Theorem \ref{T:4.3}. 
\end{proof}

\vskip.1in

\section{$CR$-products in Kaehler manifolds}
 
 A submanifold $N$ in a Kaehler manifold $\tilde M$ is called a {\it totally real submanifold} if the almost complex structure $J$ of $\tilde M$ carries each tangent space $T_xN$ of $N$ into its corresponding normal space $T_x^\perp N$  \cite{c6,c8.1,CO}. The submanifold $N$ is called a
holomorphic submanifold (or Kaehler submanifold) if $J$ carries each $T_xN$ into itself. The submanifold $N$ is called {\it slant} \cite{c1990}  if for any nonzero vector $X$ tangent to $N$
the angle $\theta(X)$ between $JX$ and $T_pN$  does not depend on
the choice of the point $p\in N$ and of the choice of the vector $X\in T_pN$.  On the other hand, it depends only on the point $p$, then the submanifold $N$ is called {\it pointwise slant} \cite{CG}.

Let $M$ be a submanifold of a Kaehler manifold $\tilde M$.  For each point $p\in M$, put
$${\mathcal H}_p=T_p M\cap J(T_pM),$$ i.e., $\mathcal H_p$ is the maximal holomorphic subspace of the tangent space $T_pM$. If the dimension of $\mathcal H_p$ remains the same for each $p\in M$, then $M$ is called a {\it generic submanifold}  \cite{c1981}.

A $CR$-submanifold  of a Kaehler manifold $\tilde M$ is called a
{\it $CR$-product} \cite{c2,c3} if it is a Riemannian product  $N_T\times N_\perp$ of a Kaehler submanifold $N_T$ and a totally real submanifold $N_\perp$. A $CR$-submanifold  is called {\it mixed totally geodesic} if the second fundamental form of the $CR$-submanifold satisfying $$\sigma(X,Z)=0$$ for any $X\in \mathcal D$ and $Z\in \mathcal D^{\perp}$.

For $CR$-products in complex space forms, the following result  are known. 

\begin{theorem}\label{T:5.1} \cite{c2} We have
\begin{enumerate}
\item[(i)] A $CR$-submanifold in the complex Euclidean $m$-space $\mathbb C^m$  is a $CR$-product if and only if it is a direct sum of a Kaehler submanifold and a totally real submanifold of  linear complex subspaces. 

\item[(ii)] There do not exist $CR$-products in complex hyperbolic spaces other than Kaehler submanifolds and totally real submanifolds.
\end{enumerate}\end{theorem}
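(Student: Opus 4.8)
The plan is to handle the two parts in tandem, since in both the engine is the combination of the product metric with the Kaehler condition $\tilde\nabla J=0$. Write $TM=\mathcal D\oplus\mathcal D^\perp$ with $J\mathcal D=\mathcal D$ and $J\mathcal D^\perp\subseteq T^\perp M$, and let $\nu$ denote the maximal $J$-invariant subbundle of $T^\perp M$, so that $T^\perp M=J\mathcal D^\perp\oplus\nu$. For a $CR$-product $M=N_T\times N_\perp$ the two factors are totally geodesic leaves, so that $\nabla_XZ=\nabla_ZX=0$, $\nabla_XY\in\mathcal D$ and $\nabla_ZZ'\in\mathcal D^\perp$ for $X,Y\in\mathcal D$ and $Z,Z'\in\mathcal D^\perp$. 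Feeding these into the Gauss formula and expanding $\tilde\nabla_X(JY)=J\tilde\nabla_XY$, I would first record the structural identities $\sigma(X,JY)=J\sigma(X,Y)$ together with $\sigma(\mathcal D,\mathcal D)\subseteq\nu$, and the mixed relation $\sigma(JX,Z)=J\sigma(X,Z)$. These require only that the ambient space be Kaehler and form the common starting point for both parts.

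For part (i) the ``if'' direction is immediate: a direct sum of a Kaehler submanifold of a complex subspace and of a totally real submanifold of a complementary complex subspace is a Riemannian product whose tangent distributions are precisely holomorphic and totally real. For the ``only if'' direction the decisive step is to promote the mixed relation to mixed total geodesy $\sigma(\mathcal D,\mathcal D^\perp)=0$; here I would use the flatness of $\mathbb C^m$ through the Codazzi equation, whose curvature term then vanishes, in combination with $\sigma(JX,Z)=J\sigma(X,Z)$, to kill $\sigma(X,Z)$. Once $M$ is mixed totally geodesic, the hypothesis of the lemma of Moore is met, and the immersion splits as a product $f=(f_1,f_2)$ relative to an orthogonal decomposition $\mathbb E^{N}=\mathbb E^{m_1}\times\mathbb E^{m_2}$. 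Finally, because $\mathcal D$ is $J$-invariant and $\sigma(\mathcal D,\mathcal D)\subseteq\nu$ is $J$-invariant as well, the osculating space of the factor $N_T$ is a $J$-invariant, hence complex, linear subspace $\mathbb C^{h}$; its orthogonal complement is then a complex subspace in which $N_\perp$ sits as a totally real submanifold, which is exactly the asserted direct-sum description.

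For part (ii), suppose a $CR$-product in $CH^m(4c)$, with $c<0$, has both $h=\dim_{\mathbb C}N_T\ge1$ and $p=\dim N_\perp\ge1$. Choosing unit vectors $X\in\mathcal D$ and $Z\in\mathcal D^\perp$, the plane $X\wedge Z$ is a mixed section of the Riemannian product, so its intrinsic sectional curvature vanishes, $K(X\wedge Z)=0$. Since $JX\in\mathcal D$ is orthogonal to $Z$, the plane is totally real for the ambient complex structure and the constant-holomorphic-curvature formula gives $\tilde K(X\wedge Z)=c$. The Gauss equation $0=\tilde K(X\wedge Z)+\langle\sigma(X,X),\sigma(Z,Z)\rangle-\|\sigma(X,Z)\|^2$ therefore reduces to $c=\|\sigma(X,Z)\|^2-\langle\sigma(X,X),\sigma(Z,Z)\rangle$. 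The goal is to show the right-hand side is nonnegative, contradicting $c<0$: summing over adapted orthonormal frames of $\mathcal D$ and $\mathcal D^\perp$ and using $\sigma(\mathcal D,\mathcal D)\subseteq\nu$ to control the placement of $\sigma(X,X)$ relative to $\sigma(Z,Z)$, I would bound the aggregate cross term by the aggregate $\|\sigma(X,Z)\|^2$. The resulting $c\ge0$ forces one of $\mathcal D,\mathcal D^\perp$ to be trivial, so that $M$ is either a Kaehler or a totally real submanifold.

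The main obstacle in both parts is the same passage from the algebraic identities supplied by $J$-parallelism to a definite conclusion about $\sigma(\mathcal D,\mathcal D^\perp)$ and the cross term $\langle\sigma(X,X),\sigma(Z,Z)\rangle$: in (i) this is forcing mixed total geodesy out of flatness so that Moore's lemma applies, and in (ii) it is controlling the sign of $\|\sigma(X,Z)\|^2-\langle\sigma(X,X),\sigma(Z,Z)\rangle$. By comparison, promoting Moore's real product splitting to a splitting by complex linear subspaces is a routine $J$-invariance argument once the normal-bundle placement of $\sigma$ has been pinned down.
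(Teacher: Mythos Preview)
The paper is a survey and gives no proof of this theorem; it simply cites \cite{c2}. So there is no in-paper argument to compare against, and the relevant benchmark is the original proof in \cite{c2}. Your structural identities ($A_{JZ}X=0$, $\sigma(\mathcal D,\mathcal D)\subset\nu$, $\sigma(JX,Z)=J\sigma(X,Z)$, $\sigma(X,JY)=J\sigma(X,Y)$ for $X,Y\in\mathcal D$) are exactly the right ones, and your finishing moves in (i)---Moore's lemma plus the $J$-invariance of the first normal space of $N_T$---are correct. The gap is the central step in both parts.

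In (i) you propose to ``kill $\sigma(X,Z)$'' via Codazzi, and in (ii) you propose to show $\|\sigma(X,Z)\|^2-\langle\sigma(X,X),\sigma(Z,Z)\rangle\ge 0$ by summing over frames and using $\sigma(\mathcal D,\mathcal D)\subset\nu$. Neither route works as stated: Codazzi controls $\bar\nabla\sigma$, not $\sigma$ itself, and $\sigma(Z,Z)$ has a $\nu$-component in general, so no orthogonality kills the cross term. The argument in \cite{c2} is both simpler and unified for (i) and (ii): from your identity $\sigma(X,JY)=J\sigma(X,Y)$ one gets $\sigma(JX,JX)=-\sigma(X,X)$, so applying the Gauss equation to the two totally real mixed planes $X\wedge Z$ and $JX\wedge Z$ and adding eliminates the cross term and yields
\[
0=2c-2\|\sigma(X,Z)\|^2,\qquad\text{i.e.}\qquad \|\sigma(X,Z)\|^2=c
\]
for all unit $X\in\mathcal D$, $Z\in\mathcal D^\perp$ (equivalently, one computes $\tilde R(X,JX,JZ,Z)=2\|\sigma(X,Z)\|^2=2c$ directly). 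For $c<0$ this is a contradiction, proving (ii); for $c=0$ it gives $\sigma(\mathcal D,\mathcal D^\perp)=0$, after which your Moore-plus-$J$-invariance argument finishes (i).
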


 $CR$-products $N_T\times N_\perp$ in $CP^{h+p+hp}$ are obtained from the Segre embedding $S_{h,p}$; namely, we have the following results.

\begin{theorem}\label{T:5.2}  \cite{c2} Let $N_T^{h}\times N_\perp^p$ be the $CR$-product in $CP^m$ with constant holomorphic sectional curvature $4$. Then 
\begin{equation}\label{5.1} m\geq h+p+ hp. \end{equation}
 
The equality sign of \eqref{5.1} holds if and only if 

\begin{enumerate}
\item[(a)] $N_T^h$ is a totally geodesic Kaehler submanifold, 

\item[(b)] $N_\perp^p$ is a totally real submanifold, and 

\item[(c)] the immersion is given by
\begin{equation}\notag N_T^h\times N_\perp^p \xrightarrow{\text{{\rm}}} CP^h\times CP^p\xrightarrow [\text{{\rm Segre imbedding}}]{\text{$S_{hp}$}} CP^{h+p+hp}.\end{equation} 
\end{enumerate}\end{theorem}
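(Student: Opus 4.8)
The plan is to recast the inequality as a dimension count inside the normal bundle. Since $M=N_T^h\times N_\perp^p$ is a $CR$-submanifold, $TM=\mathcal D\oplus\mathcal D^\perp$ with $\mathcal D$ holomorphic ($\dim_{\mathbb R}\mathcal D=2h$) and $\mathcal D^\perp$ totally real ($\dim_{\mathbb R}\mathcal D^\perp=p$), and the normal bundle splits orthogonally as $T^\perp M=J\mathcal D^\perp\oplus\nu$, where $\nu$ is the maximal $J$-invariant subbundle of $T^\perp M$ and $\dim_{\mathbb R}J\mathcal D^\perp=p$. A direct count gives $\dim_{\mathbb R}\nu=2m-2h-2p$, so \eqref{5.1} is exactly equivalent to $\dim_{\mathbb R}\nu\ge 2hp$. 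Thus everything reduces to exhibiting $2hp$ orthonormal normal vectors inside $\nu$.

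First I would record the structural identities forced by $M$ being a Riemannian product (so $\mathcal D,\mathcal D^\perp$ are parallel and $\nabla_XZ=0$ for $X\in\mathcal D$, $Z\in\mathcal D^\perp$) together with $\tilde\nabla J=0$. Short computations using these facts give $\langle\sigma(X,Y),JZ\rangle=\langle JY,\nabla_XZ\rangle=0$ and $\langle\sigma(X,Z),JW\rangle=0$ for $X,Y\in\mathcal D$, $Z,W\in\mathcal D^\perp$, so that both $\sigma(\mathcal D,\mathcal D)$ and $\sigma(\mathcal D,\mathcal D^\perp)$ take values in $\nu$. The same manipulation yields the $J$-linearities $\sigma(JX,Y)=J\sigma(X,Y)$ on $\mathcal D\times\mathcal D$ and $\sigma(JX,Z)=J\sigma(X,Z)$ on $\mathcal D\times\mathcal D^\perp$; in particular $\sigma(\mathcal D,\mathcal D)$ spans a $J$-invariant (hence complex) subspace of $\nu$.

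The heart of the proof is the Gauss equation \eqref{2.4} fed with the curvature tensor of $CP^m(4)$. For $U,V\in\mathcal D$ and orthonormal $Z_a,Z_b\in\mathcal D^\perp$, every holomorphic-sectional-curvature term carries a factor pairing $J\mathcal D$ (or $J\mathcal D^\perp$) against $\mathcal D^\perp$ and therefore drops out, leaving only $\tilde R(U,Z_a;V,Z_b)=-\langle U,V\rangle\langle Z_a,Z_b\rangle$; since the intrinsic product curvature of $N_T^h\times N_\perp^p$ also vanishes on these mixed arguments, Gauss collapses to
\[\langle\sigma(U,Z_a),\sigma(V,Z_b)\rangle=\langle U,V\rangle\langle Z_a,Z_b\rangle+\langle\sigma(U,V),\sigma(Z_a,Z_b)\rangle.\]
Now replace $(U,V)$ by $(JU,JV)$: the left-hand side is unchanged because $J$ is an isometry, while on the right $\langle JU,JV\rangle=\langle U,V\rangle$ and $\sigma(JU,JV)=-\sigma(U,V)$, so the last term changes sign. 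Subtracting forces $\langle\sigma(U,V),\sigma(Z_a,Z_b)\rangle=0$, i.e. $\sigma(\mathcal D,\mathcal D)\perp\sigma(\mathcal D^\perp,\mathcal D^\perp)$, whence
\[\langle\sigma(w_\alpha,Z_a),\sigma(w_\beta,Z_b)\rangle=\delta_{\alpha\beta}\,\delta_{ab}\]
for any orthonormal frame $\{w_\alpha\}$ of $\mathcal D$. These $2hp$ vectors $\sigma(w_\alpha,Z_a)$ lie in $\nu$ and are orthonormal, so $\dim_{\mathbb R}\nu\ge 2hp$, which is precisely \eqref{5.1}.

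For the equality case $m=h+p+hp$ one has $\dim_{\mathbb R}\nu=2hp$, so $\nu=\operatorname{span}\sigma(\mathcal D,\mathcal D^\perp)$. Running Gauss once more with the same $J$-trick on the identity $\langle\sigma(U,V),\sigma(W,Z)\rangle=\langle\sigma(U,Z),\sigma(V,W)\rangle$ (valid because the product and ambient curvatures again vanish) gives $\langle J\sigma(U,V),\sigma(W,Z)\rangle=0$; since $\sigma(\mathcal D,\mathcal D)$ is $J$-invariant this yields $\sigma(\mathcal D,\mathcal D)\perp\nu$, and combined with $\sigma(\mathcal D,\mathcal D)\subseteq\nu$ we conclude $\sigma(\mathcal D,\mathcal D)=0$, i.e. $N_T^h$ is totally geodesic, proving (a); (b) is built into the definition of a $CR$-product. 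The remaining point (c) is where I expect the real difficulty: having determined $\sigma$ completely, I would match these Gauss, Codazzi and Ricci data with those of the Segre embedding $S_{hp}$ and invoke the uniqueness part of the fundamental theorem of submanifolds of $CP^m$ — equivalently, the converse-to-Segre rigidity of Theorem \ref{T:4.1} — to identify $\phi$, up to congruence, with the stated composition through $S_{hp}$. This global rigidity step, rather than the pointwise estimate, is the main obstacle, since it requires upgrading the pointwise normal-form of the second fundamental form to a genuine congruence of immersions.
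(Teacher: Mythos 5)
Your proof of inequality \eqref{5.1} is correct, and it is in substance the original argument: this survey states Theorem \ref{T:5.2} without proof (it is quoted from \cite{c2}), and the proof there rests on exactly your dimension count in the maximal $J$-invariant normal subbundle $\nu$. The identities $\sigma(\mathcal D,TM)\perp J\mathcal D^\perp$ and $\sigma(JX,\cdot)=J\sigma(X,\cdot)$ do follow from parallelism of the two factors plus $\tilde\nabla J=0$ as you indicate; the curvature tensor of $CP^m(4)$ really does collapse to $-\langle U,V\rangle\langle Z_a,Z_b\rangle$ on the mixed arguments because every holomorphic term pairs $J\mathcal D$ or $J\mathcal D^\perp$ against $\mathcal D^\perp$ or $TM$ and dies; and the $J$-substitution $(U,V)\mapsto(JU,JV)$ correctly kills $\langle\sigma(U,V),\sigma(Z_a,Z_b)\rangle$, yielding the $2hp$ orthonormal vectors $\sigma(w_\alpha,Z_a)$ in $\nu$ and hence $2m-2h-2p=\dim_{\mathbb R}\nu\geq 2hp$. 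Your deduction of (a) in the equality case is also sound: the same Gauss-plus-$J$ manipulation gives $\sigma(\mathcal D,\mathcal D)\perp\sigma(\mathcal D,\mathcal D^\perp)$ unconditionally, and when $\nu$ is spanned by $\sigma(\mathcal D,\mathcal D^\perp)$ this forces $\sigma(\mathcal D,\mathcal D)=0$.

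The genuine gap is statement (c), and you have correctly located it but not closed it. Two points. First, the ``if'' direction (that the composition through $S_{h,p}$ actually attains $m=h+p+hp$) is never checked; it is easy but must be said. Second, and more seriously, the tool you propose for the ``only if'' direction does not apply: Theorem \ref{T:4.1} is a rigidity theorem for \emph{holomorphic} isometric immersions of products of Kaehler manifolds, whereas $N_T^h\times N_\perp^p$ is a $CR$-product, not a complex submanifold, so the converse-to-Segre theorem cannot be invoked directly. Likewise, the fundamental theorem of submanifolds only delivers a congruence after you have exhibited a model immersion with the same first and second fundamental forms and normal connection; producing that model here means reconstructing, from the data $\sigma(\mathcal D,\mathcal D)=0$, $\sigma(\mathcal D,\mathcal D^\perp)$ orthonormal, and the behaviour of $\sigma(\mathcal D^\perp,\mathcal D^\perp)$ and the normal connection, a totally geodesic $CP^h$, a totally geodesic $CP^p$ receiving $N_\perp$ as a totally real submanifold, and the Segre structure linking them. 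That reconstruction is the actual content of (c) in \cite{c2}, and it is the step your proposal leaves as a declared intention rather than a proof.
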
  

\begin{theorem} \label{T:5.3} \cite{c2}  Let $N_T^{h}\times N_\perp^p$ be a $CR$-product in $CP^m$. Then the squared norm of the second fundamental form  satisfies
\begin{equation}\label{5.2} ||\sigma||^2\geq 4 hp.\end{equation}
 
The equality sign of \eqref{5.2} holds if and only if 

\begin{enumerate}
\item[(a)] $N_T^h$ is a totally geodesic Kaehler submanifold, 

\item[(b)] $N_\perp^p$ is a totally geodesic totally real submanifold, and 

\item[(c)] the immersion is given by
\begin{equation}\notag N_T^h\times N_\perp^p \xrightarrow{\text{{\rm totally geodesic}}} CP^h\times CP^p\xrightarrow [\text{{\rm Segre imbedding}}]{\text{$S_{hp}$}} CP^{h+p+hp}\subset CP^m.
\end{equation} 
\end{enumerate}\end{theorem}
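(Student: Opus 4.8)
The plan is to decompose the tangent bundle as $TM=\mathcal D\oplus\mathcal D^\perp$, where $\mathcal D$ is the holomorphic distribution tangent to $N_T^{h}$ (so $\dim_{\mathbb R}\mathcal D=2h$) and $\mathcal D^\perp$ is the totally real distribution tangent to $N_\perp^{p}$ (so $\dim_{\mathbb R}\mathcal D^\perp=p$), and then to estimate $\|\sigma\|^2$ block by block. I would fix a local orthonormal frame $X_1,\dots,X_{2h}$ of $\mathcal D$ and $Z_1,\dots,Z_p$ of $\mathcal D^\perp$, so that $\|\sigma\|^2=\|\sigma_{\mathcal D\mathcal D}\|^2+2\sum_{\alpha,s}\|\sigma(X_\alpha,Z_s)\|^2+\|\sigma_{\mathcal D^\perp\mathcal D^\perp}\|^2$. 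The whole point is to show that the mixed block alone already accounts for $4hp$, while the two diagonal blocks are nonnegative and vanish exactly in the equality case.

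The first key step is a pointwise identity for the mixed terms coming from the Gauss equation. Since $M=N_T\times N_\perp$ is a Riemannian product, every mixed plane $X\wedge Z$ with $X\in\mathcal D$, $Z\in\mathcal D^\perp$ has vanishing intrinsic sectional curvature, so $R(X,Z,Z,X)=0$. On the ambient side, for orthonormal $X\in\mathcal D$ and $Z\in\mathcal D^\perp$ the vector $JX$ lies in $\mathcal D$ and is therefore orthogonal to $Z$, so $X\wedge Z$ is a totally real plane of $CP^m(4)$ and hence $\tilde R(X,Z,Z,X)=1$. Plugging these into the Gauss equation \eqref{2.4} gives, for each such orthonormal pair, the identity $\|\sigma(X,Z)\|^2=1+\langle\sigma(X,X),\sigma(Z,Z)\rangle$.

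The second key step is to show that the cross term washes out after summation. Because $N_T$ is a complex submanifold of the Kaehler manifold $\tilde M$, it is a Kaehler submanifold and hence minimal; moreover $N_T$ is totally geodesic in the Riemannian product $M$, so the restriction of $\sigma$ to $\mathcal D\times\mathcal D$ coincides with the second fundamental form of $N_T$ in $\tilde M$. Consequently its trace vanishes: $H_T:=\sum_\alpha\sigma(X_\alpha,X_\alpha)=0$. Summing the pointwise identity over $\alpha$ and $s$ then yields $\sum_{\alpha,s}\|\sigma(X_\alpha,Z_s)\|^2=2hp+\langle H_T,H_\perp\rangle=2hp$, where $H_\perp:=\sum_s\sigma(Z_s,Z_s)$. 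Substituting into the block decomposition gives $\|\sigma\|^2=\|\sigma_{\mathcal D\mathcal D}\|^2+4hp+\|\sigma_{\mathcal D^\perp\mathcal D^\perp}\|^2\ge 4hp$, which is \eqref{5.2}.

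For the equality discussion, equality forces $\|\sigma_{\mathcal D\mathcal D}\|^2=\|\sigma_{\mathcal D^\perp\mathcal D^\perp}\|^2=0$. The first says $\sigma|_{\mathcal D\times\mathcal D}=0$, i.e. $N_T$ is totally geodesic in $\tilde M$, giving (a); running the same argument with the totally geodesic factor $N_\perp\subset M$ identifies $\sigma|_{\mathcal D^\perp\times\mathcal D^\perp}$ with the second fundamental form of $N_\perp$ in $\tilde M$, so the second says $N_\perp$ is totally geodesic in $\tilde M$, giving (b). The remaining assertion (c), that the immersion is congruent to the composition with the Segre embedding $S_{hp}$, is the delicate part: it is not contained in the norm inequality but requires a rigidity argument identifying a $CR$-product with totally geodesic factors as the standard Segre model, which I would extract from the equality analysis of Theorem \ref{T:5.2} (equivalently, from the converse-to-Segre result, Theorem \ref{T:4.1}). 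This final rigidity step is where I expect the main work to lie, the inequality \eqref{5.2} together with parts (a) and (b) being essentially forced by the Gauss equation and the minimality of complex submanifolds.
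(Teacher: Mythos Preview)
Your argument is correct. Note, however, that this survey paper does not actually supply a proof of Theorem~\ref{T:5.3}; it is quoted from \cite{c2}. So the comparison is with Chen's original 1981 proof.

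Your route and Chen's original differ in one technical point. Chen first establishes, for a $CR$-product in any Kaehler manifold, the pointwise identity
\[
2\,\|\sigma(X,Z)\|^{2}=\tilde H_{B}(X,Z)
\]
for unit $X\in\mathcal D$, $Z\in\mathcal D^{\perp}$, where $\tilde H_{B}$ is the holomorphic bisectional curvature of $\tilde M$; this uses the $CR$-product condition $A_{JZ}X=0$ (the $f\equiv\text{const}$ case of Proposition~\ref{P:8.1}) together with the Kaehler identity $\sigma(JX,Z)=J\sigma(X,Z)$. In $CP^{m}(4)$ this gives $\|\sigma(X,Z)\|^{2}=1$ for every such pair, hence $\sum_{\alpha,s}\|\sigma(X_{\alpha},Z_{s})\|^{2}=2hp$ directly. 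You instead apply Gauss to the totally real mixed plane to obtain $\|\sigma(X,Z)\|^{2}=1+\langle\sigma(X,X),\sigma(Z,Z)\rangle$, and kill the cross term only after summation via the minimality of the complex factor $N_{T}$. Both reach the same mixed contribution $4hp$; Chen's identity is pointwise and reusable in variable-curvature Kaehler ambients, while yours is shorter and avoids developing the $CR$-product structural lemmas. Your handling of the equality case and the deferral of (c) to the Segre rigidity (Theorems~\ref{T:4.1} and \ref{T:5.2}) matches the original.
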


\section{Warped Product $CR$-submanifolds}
 
In this section we present known results on warped product $CR$-submanifold in Kaehler manifolds.
First, we mention the following result. 

\begin{theorem}\label{T:6.1} \cite{c7} If  $\,N_\perp\times_f N_T$ is a warped product $CR$-submanifold of a Kaehler manifold $\tilde M$ such that $N_\perp$ is a totally real and $N_T$  a Kaehler submanifold of $\tilde M$, then it is a $CR$-product.\end{theorem}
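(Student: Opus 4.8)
The plan is to exploit the standard connection formulas of a warped product together with the Kaehler condition $\tilde\nabla J=0$ to force the warping function to be constant, after which the metric collapses to a product metric. Write $B=N_\perp$ for the totally real base and $F=N_T$ for the holomorphic fiber, so that $TM=\mathcal D^\perp\oplus\mathcal D$ with $\mathcal D^\perp=TN_\perp$ and $\mathcal D=TN_T$; thus $JX$ is normal to $M$ for every $X\in\mathcal D^\perp$, while $J$ preserves $\mathcal D$. The one fact I would borrow from warped-product geometry is that for $X$ tangent to the base and $Z$ tangent to the fiber one has $\nabla_Z X=(X\ln f)\,Z$, so the warping function enters the intrinsic geometry precisely through the gradient of $\ln f$ on $N_\perp$.

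The heart of the argument is to evaluate $\langle\sigma(Z,W),JX\rangle$ for $Z,W\in\mathcal D$ and $X\in\mathcal D^\perp$. Since $W$ is tangent and $JX$ is normal, $\langle W,JX\rangle\equiv 0$; differentiating along $Z$ and using the Gauss formula gives $\langle\sigma(Z,W),JX\rangle=-\langle W,\tilde\nabla_Z JX\rangle$. I would then apply $\tilde\nabla J=0$ to write $\tilde\nabla_Z JX=J\tilde\nabla_Z X=(X\ln f)\,JZ+J\sigma(Z,X)$, invoking the connection formula above. The term $\langle W,J\sigma(Z,X)\rangle=-\langle JW,\sigma(Z,X)\rangle$ vanishes, because $JW$ is tangent while $\sigma(Z,X)$ is normal. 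What survives is the clean identity
\[ \langle\sigma(Z,W),JX\rangle=-(X\ln f)\,\langle W,JZ\rangle. \]
I expect the only delicate point to be the bookkeeping in this computation, in particular tracking which terms are tangent versus normal so that the $J\sigma(Z,X)$ contribution genuinely drops out; once the displayed identity is secured, the rest is immediate.

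The final step is to notice a structural clash in this identity: its left-hand side is symmetric in $Z$ and $W$ (as $\sigma$ is symmetric), whereas its right-hand side is antisymmetric, since $\langle W,JZ\rangle=-\langle Z,JW\rangle$. A quantity that is simultaneously symmetric and antisymmetric must vanish, so $(X\ln f)\langle W,JZ\rangle=0$ for all $Z,W\in\mathcal D$. Choosing $W=JZ$ with $Z\neq 0$ gives $\langle W,JZ\rangle=\langle JZ,JZ\rangle=|Z|^2>0$, forcing $X\ln f=0$ for every $X\in\mathcal D^\perp=TN_\perp$. Hence the gradient of $f$ on $N_\perp$ vanishes and $f$ is constant; the warped product metric $g_{N_\perp}+f^2g_{N_T}$ then reduces to a Riemannian product metric, so $M=N_\perp\times N_T$ is a $CR$-product, as claimed.
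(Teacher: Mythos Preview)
Your argument is correct and is essentially the same as the original proof in \cite{c7}: one computes $\langle\sigma(Z,W),JX\rangle$ (equivalently $\langle A_{JX}Z,W\rangle$) for $Z,W\in\mathcal D$ and $X\in\mathcal D^\perp$, obtains $-(X\ln f)\langle W,JZ\rangle$ via the Kaehler condition and the warped-product connection formula, and then the symmetry of $\sigma$ versus the skew-symmetry of $\langle\cdot,J\cdot\rangle$ forces $X\ln f=0$, so $f$ is constant. The survey under review does not reproduce this proof but only cites \cite{c7}; your write-up matches that source.
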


Theorem \ref{T:6.1} shows that there does not exist warped product $CR$-submanifolds of the form $N_\perp\times_f N_T$  other than $CR$-products. So, we only  need to consider warped product $CR$-submanifolds of the form: $N_T\times_f N_\perp$, {\it by reversing the two factors $N_T$ and $N_\perp$} of the warped product. The author simply calls such  $CR$-submanifolds $CR$-{\it warped products} in \cite{c7}.
  
$CR$-{\it warped products} are simply characterized  as follows.

\begin{proposition}\label{P:8.1}   \cite{c7} A proper $CR$-submanifold $M$ of a Kaehler manifold $\tilde M$ is locally a $CR$-warped product if and only if the shape operator $A$ satisfies \begin{equation}A_{JZ}X=((JX)\mu)Z,\quad X\in \mathcal D,\quad Z\in \mathcal D^\perp,\end{equation}
for some  function $\mu$ on $M$ satisfying $W\mu=0,\,\forall W\in \mathcal D^\perp$. \end{proposition}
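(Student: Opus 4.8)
The plan is to prove both implications by translating the algebraic hypothesis on the shape operator $A$ into statements about the Levi--Civita connection $\nabla$ of $M$, using the Gauss and Weingarten formulas together with the parallelism $\tilde\nabla J=0$ of the Kaehler structure, and then to recognize the resulting connection pattern as that of a warped product via the internal characterization of warped products (Hiepko's structure theorem). Throughout, the function $\mu$ will turn out to be, up to sign, $\ln f$, and the condition $W\mu=0$ for $W\in\mathcal D^\perp$ corresponds exactly to the warping function depending only on the first factor $N_T$.

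For the direct implication, assume $M=N_T\times_f N_\perp$ is a $CR$-warped product and set $\mu=\ln f$. Since $N_T$ is totally geodesic in $M$ one has $\nabla_XY\in\mathcal D$ for $X,Y\in\mathcal D$, while the warped-product connection gives $\nabla_W(JX)=((JX)\mu)W$ for $X\in\mathcal D$, $W\in\mathcal D^\perp$. For $Y\in\mathcal D$ I would compute $\langle A_{JZ}X,Y\rangle=\langle\sigma(X,Y),JZ\rangle=-\langle\tilde\nabla_X(JY),Z\rangle=-\langle\nabla_X(JY),Z\rangle=0$, using $JY\in\mathcal D$ and the totally geodesic property, so $A_{JZ}X\in\mathcal D^\perp$. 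For $W\in\mathcal D^\perp$ the same Kaehler manipulation gives $\langle A_{JZ}X,W\rangle=\langle\sigma(X,W),JZ\rangle=-\langle\nabla_W(JX),Z\rangle$, and feeding in the warped-product identity yields $((JX)\mu)\langle Z,W\rangle$ (up to the sign fixed by the convention for $\mu$); hence $A_{JZ}X=((JX)\mu)Z$.

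For the converse I would reverse these steps. First, $A_{JZ}X=((JX)\mu)Z\in\mathcal D^\perp$ forces $\langle\sigma(X,Y),JZ\rangle=\langle A_{JZ}X,Y\rangle=0$ for all $X,Y\in\mathcal D$; running the Kaehler identity backwards gives $\langle\nabla_X(JY),Z\rangle=0$, and since $J$ maps $\mathcal D$ onto itself this shows $\nabla_XY\in\mathcal D$, so $\mathcal D$ is autoparallel and its leaves are totally geodesic in $M$. Second, using that $\mathcal D^\perp$ is integrable (the totally real distribution of a $CR$-submanifold always is), I would compute, for $X\in\mathcal D$ and $Z,W\in\mathcal D^\perp$, that $\langle\nabla_WZ,X\rangle=(X\mu)\langle Z,W\rangle$, so that the second fundamental form of a leaf of $\mathcal D^\perp$ in $M$ equals $\langle Z,W\rangle\,\nabla\mu$; since $W\mu=0$ the gradient $\nabla\mu$ lies in $\mathcal D$, and each leaf is totally umbilical with mean curvature vector $\nabla\mu$. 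To obtain the spherical (parallel mean curvature) condition I would use the symmetry of $\mathrm{Hess}\,\mu$ together with $\nabla_XW\in\mathcal D^\perp$ (which follows from the totally geodesic property of $\mathcal D$) to get $\langle\nabla_W\nabla\mu,X\rangle=\mathrm{Hess}\,\mu(X,W)=-\langle\nabla\mu,\nabla_XW\rangle=0$, so $\nabla\mu$ is parallel in the normal bundle of the leaf. Hiepko's theorem then yields that $M$ is locally the warped product $N_T\times_f N_\perp$ with $f=e^{\mu}$; since $\mathcal D$ is $J$-invariant its leaves $N_T$ are complex submanifolds and the leaves $N_\perp$ of $\mathcal D^\perp$ are totally real, so $M$ is a $CR$-warped product.

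The main obstacle is the converse: correctly converting the single shape-operator identity into the two separate geometric facts, namely autoparallelism of $\mathcal D$ and sphericity of $\mathcal D^\perp$, demands careful bookkeeping of the Kaehler identity $\tilde\nabla J=0$ and of which terms are tangent versus normal, and the sphericity step in particular relies on the Hessian symmetry argument above rather than on umbilicity alone. Once these connection identities are in place, invoking the warped-product structure theorem is routine.
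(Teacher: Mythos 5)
Your proof is correct and takes essentially the same route as the original argument in \cite{c7} (the survey itself states Proposition \ref{P:8.1} without proof): use $\tilde\nabla J=0$ and the Gauss--Weingarten formulas to convert the shape-operator identity into autoparallelism of $\mathcal D$ and total umbilicity plus sphericity of the leaves of $\mathcal D^\perp$, and then invoke Hiepko's intrinsic characterization of warped products, with $\mu=\ln f$ up to the sign fixed by the Weingarten convention. No gaps.
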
 

A fundamental result on $CR$-warped products in arbitrary Kaehler manifolds is  the following  theorem.

\begin{theorem} \label{T:6.2}   \cite{c7,c18} Let $N_T\times_f N_\perp$ be a  $CR$-warped product submanifold in an arbitrary Kaehler manifold $\tilde M$. Then the second fundamental form $\sigma$ satisfies
\begin{equation}\label{6.2}||\sigma||^2\geq 2p\,||\nabla(\ln f)||^2,\end{equation} where $\nabla (\ln f)$ is the gradient of $\,\ln f$ on $N_T$ and $p=\dim N_\perp$.

If the equality sign of \eqref{6.2}  holds identically, then $N_T$ is a totally geodesic Kaehler submanifold and $N_\perp$ is a totally
umbilical totally real submanifold of $\tilde M$. Moreover, $N_T\times_f N_\perp$ is  minimal in $\tilde M$.

 When $M$ is anti-holomorphic, i.e., when $J\mathcal D^\perp_x=T^\perp_xN$, and $p>1$. The equality sign of \eqref{6.2}  holds identically if and only if   $N_\perp$ is a totally umbilical submanifold of $\tilde M$.

 Let $M$ be anti-holomorphic with $p=1$. The equality sign of  \eqref{6.2} holds identically if
the characteristic vector field $J\xi$ of $M$ is a principal vector field with zero as its principal
curvature. Conversely, if the equality sign of  \eqref{6.2} holds, then the characteristic vector field
$J\xi$ of $M$ is a principal vector field with zero as its principal curvature only if $M=N_T\times_f
N^\perp$ is a trivial CR-warped product immersed in $\tilde M$ as a totally geodesic hypersurface.

 Also, when $M$ is anti-holomorphic with $p=1$, the equality sign of  \eqref{6.2} holds identically if and only if $M$ is a minimal hypersurface in $\tilde M$.
 \end{theorem}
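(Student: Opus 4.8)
The plan is to use the fact that under these hypotheses $M$ is a real hypersurface and then simply read off its shape operator from Proposition \ref{P:8.1}. Since $M$ is anti-holomorphic with $p=1$, the distribution $\mathcal D^\perp$ is one-dimensional and $J\mathcal D^\perp=T^\perp M$, so the normal bundle of $M$ in $\tilde M$ is a line. I would fix a local unit normal $\xi$; then $J\xi$ is the unit characteristic vector field spanning $\mathcal D^\perp$, and I set $A:=A_\xi$ for the scalar-valued shape operator. Because the normal bundle is one-dimensional, $\|\sigma\|^2=\|A\|^2=\operatorname{trace}(A^2)$, and $M$ is minimal exactly when $\operatorname{trace}A=0$. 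The whole argument will amount to showing that the ``excess'' in \eqref{6.2} is precisely $(\operatorname{trace}A)^2$.

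First I would compute how $A$ acts on the holomorphic distribution. Applying Proposition \ref{P:8.1} with $\mu=\ln f$ and $Z=J\xi\in\mathcal D^\perp$, so that $JZ=-\xi$, yields $AX=-((JX)(\ln f))\,J\xi$ for every $X\in\mathcal D$. In particular $AX$ lies in $\mathcal D^\perp$, so the entire $\mathcal D\times\mathcal D$ block of $A$ vanishes. Choosing an orthonormal frame $e_1,\dots,e_{2k}$ of $\mathcal D$ together with $e_{2k+1}=J\xi$, the matrix of $A$ thus has zero diagonal on $\mathcal D$, off-diagonal entries $\langle Ae_i,J\xi\rangle=-(Je_i)(\ln f)$, and a single remaining diagonal entry $\lambda:=\langle A(J\xi),J\xi\rangle$.

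Next I would evaluate the two scalars that appear. Since $J$ is an isometry preserving $\mathcal D$, the vectors $Je_1,\dots,Je_{2k}$ form an orthonormal frame of $\mathcal D$; and because $\ln f$ is a function on the $N_T$-factor with $(J\xi)(\ln f)=0$, its gradient $\nabla(\ln f)$ lies in $\mathcal D$. Hence $\sum_i((Je_i)(\ln f))^2=\|\nabla(\ln f)\|^2$, and summing the squared entries of $A$ gives
$$\|\sigma\|^2=\|A\|^2=2\,\|\nabla(\ln f)\|^2+\lambda^2.$$
At the same time $\operatorname{trace}A=\lambda$, since the $\mathcal D$-diagonal is zero. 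With $p=1$ this is exactly $\|\sigma\|^2=2p\,\|\nabla(\ln f)\|^2+(\operatorname{trace}A)^2$, so the equality case of \eqref{6.2} holds identically if and only if $\operatorname{trace}A\equiv 0$, that is, if and only if $M$ is a minimal hypersurface.

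The computation is routine once the frame is set up; the only genuinely delicate point is justifying $\nabla(\ln f)\in\mathcal D$ and the attendant identity $\sum_i((Je_i)(\ln f))^2=\|\nabla(\ln f)\|^2$, which rests on $f$ being a function of the $N_T$-factor alone and on the condition $W(\ln f)=0$ for $W\in\mathcal D^\perp$ recorded in Proposition \ref{P:8.1}. Once that is in hand, the identification of the lone extra term $\lambda^2$ with $(\operatorname{trace}A)^2$ makes the equivalence with minimality immediate, and no curvature hypothesis on $\tilde M$ is needed.
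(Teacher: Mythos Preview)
The paper is a survey and does not include its own proof of Theorem~\ref{T:6.2}; it merely cites \cite{c7,c18}. So there is no in-paper argument to compare against. Your proposal addresses only the final assertion of the theorem (the anti-holomorphic $p=1$ case: equality in \eqref{6.2} holds identically if and only if $M$ is minimal), and for that assertion your argument is correct and is in fact the same computation carried out in the cited sources: one uses the identity $A_{JZ}X=((JX)\ln f)Z$ coming from Proposition~\ref{P:8.1} to see that the $\mathcal D\times\mathcal D$ block of the shape operator vanishes, that the mixed block contributes exactly $2\|\nabla(\ln f)\|^2$, and that the single remaining diagonal entry $\lambda=\langle A(J\xi),J\xi\rangle$ equals $\operatorname{trace}A$, whence $\|\sigma\|^2-2\|\nabla(\ln f)\|^2=(\operatorname{trace}A)^2$.

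Two small remarks. First, when you invoke Proposition~\ref{P:8.1} with $\mu=\ln f$, you are using slightly more than the proposition as stated: the proposition only asserts the existence of \emph{some} $\mu$ with $W\mu=0$ on $\mathcal D^\perp$, not that $\mu=\ln f$. The identification $\mu=\ln f$ is of course the content of the proof of that proposition (equivalently, it follows directly from the warped-product connection formula $\nabla_Z X=\nabla_X Z=(X\ln f)Z$ together with $\tilde\nabla_X JZ=J\tilde\nabla_X Z$), but you should say so rather than simply assert it. Second, be aware that your write-up covers only the last paragraph of the theorem; the inequality itself and the other equality characterizations (the general case, the anti-holomorphic case with $p>1$, and the penultimate paragraph about the characteristic vector field) are separate statements that your proposal does not address.
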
  

$CR$-warped products in complex space forms satisfying the equality case of \eqref{6.2} have been completely classified in \cite{c7,c8}.

\begin{theorem}\label{T:6.3} A $CR$-warped product $N_T\times_{f} N_\perp$ in  $\mathbb C^m$ satisfies
\begin{equation}||\sigma||^2= 2p||\nabla (\ln f)||^2\end{equation} identically if and only if the following four statements hold:
\begin{enumerate}
\item[(i)] $N_T$ is an open portion of a complex Euclidean $h$-space $\mathbb C^h$,

\item[(ii)] $N_\perp$ is an open portion of the unit $p$-sphere $S^p$, 

\item[(iii)] there exists  $a=(a_1,\ldots,a_h)\in S^{h-1}\subset {\mathbb E}^h$ such that $f=\sqrt{\left<a,z\right>^2 +\left<ia,z\right>^2}$ for  $z=(z_1,\ldots,z_h)\in{\mathbb C}^h,\, w=(w_0,\ldots,w_p)\in S^p\subset {\mathbb E}^{p+1}$, and

\item[(iv)] up to rigid motions, the immersion is given by
\begin{align}\notag \hbox{\bf x}(z,w)=&\Bigg(  z_1+(w_0-1)a_1\sum_{j=1}^h
a_jz_j,\,\cdots z_h+a(w_0-1)a_h\sum_{j=1}^ha_jz_j,\\& \hskip.5in w_1\sum_{j=1}^h a_jz_j,\,\,\ldots,w_p\sum_{j=1}^h a_jz_j,0,\,\ldots,0\Bigg).\notag\notag\end{align}
\end{enumerate}\end{theorem}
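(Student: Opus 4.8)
The plan is to bootstrap from the equality discussion already contained in Theorem \ref{T:6.2}, specialized to the flat ambient space $\mathbb{C}^m$ (so the curvature constant there is $c=0$), and then to integrate the resulting rigid structure equations explicitly. Since equality holds in \eqref{6.2}, Theorem \ref{T:6.2} immediately tells us that $N_T$ is a totally geodesic Kaehler submanifold, that $N_\perp$ is a totally umbilical totally real submanifold, and that the whole immersion is minimal in $\mathbb{C}^m$. A totally geodesic complex submanifold of $\mathbb{C}^m$ is an open portion of an affine complex linear subspace, which after a rigid motion I identify with $\mathbb{C}^h$; this is precisely statement (i).

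Second, I would reconstruct the full second fundamental form. Choosing an adapted orthonormal frame $e_1,\ldots,e_h,Je_1,\ldots,Je_h$ for $\mathcal{D}=TN_T$ and $e_1^*,\ldots,e_p^*$ for $\mathcal{D}^\perp=TN_\perp$, the defining identities of a $CR$-warped product give $\sigma(\mathcal{D},\mathcal{D})=0$ together with
\begin{equation}\notag \langle \sigma(X,Z),JW\rangle=((JX)\ln f)\,\langle Z,W\rangle,\qquad X\in\mathcal{D},\ Z,W\in\mathcal{D}^\perp.\end{equation}
Tracing through the derivation of \eqref{6.2}, equality forces $\sigma(\mathcal{D},\mathcal{D}^\perp)$ to lie entirely in $J\mathcal{D}^\perp$ (no component in the orthogonal complement $\nu$) and forces the $\nu$-component of $\sigma(\mathcal{D}^\perp,\mathcal{D}^\perp)$ to vanish, so $\sigma(\mathcal{D}^\perp,\mathcal{D}^\perp)$ is completely determined by the umbilical mean curvature vector of $N_\perp$, which by minimality of the immersion must point along $J\nabla(\ln f)$. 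Feeding this into the Gauss equation \eqref{2.4} (with $\tilde R=0$) shows that each slice $N_\perp$ is totally umbilical in $\mathbb{C}^m$ with constant positive sectional curvature in its induced metric, hence is an open portion of a round sphere; after rescaling this is $S^p$, giving (ii).

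Third, with every component of $\sigma$ now known, I would rewrite the Gauss formula $\tilde\nabla_XY=\nabla_XY+\sigma(X,Y)$ as an explicit linear system of second-order PDEs for the position vector $\mathbf{x}(z,w)\in\mathbb{C}^m$, using the warped-product connection relations $\nabla_XZ=\nabla_ZX=(X\ln f)Z$ for $X\in\mathcal{D}$ and $Z\in\mathcal{D}^\perp$. Integrating first along the totally geodesic flat factor $N_T\cong\mathbb{C}^h$ shows that $\mathbf{x}$ depends affinely on $z$ with coefficients governed by $f$, which forces $\ln f$ to equal (up to an additive constant) $\tfrac12\log(\langle a,z\rangle^2+\langle ia,z\rangle^2)$ for a fixed unit vector $a\in S^{h-1}$, that is, $f=\sqrt{\langle a,z\rangle^2+\langle ia,z\rangle^2}$; this is (iii). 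Integrating the remaining equations along $N_\perp\cong S^p$ then yields the explicit formula in (iv), up to a rigid motion of $\mathbb{C}^m$.

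The main obstacle is the final integration step: one must verify that this over-determined PDE system is consistent and that its only solutions form the stated one-parameter family, thereby pinning down both the precise form of $f$ in (iii) and the constant vector $a$ appearing in (iv). The converse direction—that the immersion written in (iv) is genuinely a $CR$-warped product realizing equality in \eqref{6.2}—is a direct verification, computing $\sigma$ and $\nabla(\ln f)$ from the given $\mathbf{x}$, which I would carry out last.
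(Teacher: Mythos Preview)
The paper you are working from is a survey: Theorem~\ref{T:6.3} is stated there without proof, with the classification attributed to \cite{c7,c8}. So there is no ``paper's own proof'' to compare against in this source; your outline must ultimately be measured against the original articles.

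That said, your plan follows exactly the route taken in \cite{c7}. One uses the equality discussion of Theorem~\ref{T:6.2} to obtain that $N_T$ is totally geodesic (hence an open piece of an affine $\mathbb{C}^h$), that $N_\perp$ is totally umbilical, and that the immersion is minimal; the Gauss equation then forces $N_\perp$ to have constant curvature $1$ in the induced metric, giving the sphere factor; and finally one writes down and integrates the structure equations for the position vector $\mathbf{x}$ to pin down both $f$ and the explicit immersion. Your identification of the integration step as the delicate point is accurate: in \cite{c7} this is handled by choosing a unitary frame on $\mathbb{C}^h$ adapted to $\nabla(\ln f)$, which reduces the system to one governed by a single complex coordinate and makes the form $f=|\langle a,z\rangle+i\langle ia,z\rangle|$ transparent. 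The converse verification is, as you say, a direct computation. Nothing in your sketch is wrong or missing at the strategic level; what remains is the honest computational work of carrying out the integration, which is where the original paper spends most of its effort.
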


A $CR$-warped product $N_T\times_f N_\perp$ is said to be {\it trivial\/} if its warping function $f$ is constant. A trivial $CR$-warped product $N_T\times_f N_\perp$ is nothing but a $CR$-product $N_T\times N_\perp^f$, where $N_\perp^f$ is the  manifold  with metric $f^2g_{N_\perp}$ which is  homothetic to the original metric $g_{N_\perp}$ on $N_\perp$.

The following result  completely classifies $CR$-warped products in complex projective spaces satisfying the equality case of \eqref{6.2} identically.

\begin{theorem}\label{T:6.4} \cite{c8} A non-trivial $CR$-warped product $\,N_T\times_{f} N_\perp$   in the complex projective $m$-space $CP^m(4)$ satisfies the basic equality $||\sigma||^2= 2p||\nabla (\ln f)||^2$ if and only if we have
\begin{enumerate}
\item  $N_T$ is an open portion of complex Euclidean $h$-space ${\mathbb C}^h$,

\item $N_\perp$ is an open portion of a unit $p$-sphere $S^p$, and

\item up to rigid motions, the immersion {\bf x} of $N_T\times_{f} N_\perp$ into $CP^m$ is the composition $\pi\circ\breve{\hbox{\bf x}}$, where 
\begin{align}\notag  \breve{\hbox{\bf x}}&(z,w)=\Bigg( z_0+(w_0-1)a_0\sum_{j=0}^h a_j z_j,\,\cdots ,z_h+(w_0-1)a_h\sum_{j=0}^h a_jz_j,\\&\hskip.8inw_1\sum_{j=0}^h a_jz_j,\,\ldots,\;w_p\sum_{j=0}^h
a_jz_j,0,\,\ldots,0\Bigg),\notag\end{align} 
$\pi$ is the projection $\pi:{\mathbb C}^{m+1}_*\to CP^m$, $a_0,\ldots,a_h$ are real numbers satisfying $a_0^2+a_1^2+\cdots+a_h^2=1$, $z=(z_0,z_1,\ldots,z_h)\in {\mathbb C}^{h+1}$ and $w=(w_0,\ldots,w_p)\in S^p\subset {\mathbb E}^{p+1}$.
\end{enumerate} \end{theorem}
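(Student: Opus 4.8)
The plan is to prove the two implications separately, dispatching the converse by direct computation and concentrating the effort on the forward (classification) direction. The governing idea is to use the Hopf fibration $\pi:S^{2m+1}(1)\to CP^m(4)$ to transfer the problem to the flat space $\mathbb C^{m+1}$, where the structure equations integrate explicitly; this is the projective counterpart of the argument behind Theorem \ref{T:6.3}.

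First I would extract the precise form of the second fundamental form from the equality. Proposition \ref{P:8.1} gives $\langle\sigma(X,Z),JW\rangle=((JX)\ln f)\langle Z,W\rangle$ for $X\in\mathcal D$ and $Z,W\in\mathcal D^\perp$, so the mixed part of $\sigma$ already accounts for exactly $2p\,||\nabla(\ln f)||^2$ of $||\sigma||^2$. Hence the hypothesis $||\sigma||^2=2p\,||\nabla(\ln f)||^2$ forces $\sigma(\mathcal D,\mathcal D)=0$, $\sigma(\mathcal D^\perp,\mathcal D^\perp)=0$, and $\sigma(X,Z)=((JX)\ln f)\,JZ$ with vanishing component in the remaining normal bundle. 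By the Gauss equation these relations make $N_T$ a totally geodesic complex submanifold, hence an open part of a linearly embedded $CP^h(4)$ with homogeneous coordinates $z=(z_0,\dots,z_h)$, while $N_\perp$ becomes totally umbilical with mean curvature $-\nabla(\ln f)$ and $M$ is minimal, precisely the equality content of Theorem \ref{T:6.2}.

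Next I would lift and integrate. Taking a horizontal lift $\breve\phi:M\to S^{2m+1}(1)$ of $\phi$ and composing with the totally umbilical inclusion $S^{2m+1}\hookrightarrow\mathbb C^{m+1}$ gives a position map $\breve{\mathbf x}:M\to\mathbb C^{m+1}$ with $\pi\circ\breve{\mathbf x}=\phi$. Computing the second derivatives of $\breve{\mathbf x}$ from the Gauss formula and inserting the three relations above, together with the O'Neill data of $\pi$ (which contributes the vertical term $i\,\breve{\mathbf x}$) and the sphere's umbilicity $-\langle X,Y\rangle\breve{\mathbf x}$, turns them into a closed linear system. Along $\mathcal D$ this is the system of a linearly embedded $CP^h$, linear in $z$; along $\mathcal D^\perp$ it is that of a round $p$-sphere dilated by $f$; and the two couple only through the single gradient direction $\nabla(\ln f)$. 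Solving the $\mathcal D$-equations identifies the holomorphic factor and produces the entries $z_j+(w_0-1)a_j\sum_k a_kz_k$, the constants $a_0,\dots,a_h$ entering as integration data for the coupling direction and forced by the unit-sphere normalization to satisfy $\sum_j a_j^2=1$; at the same time one reads off $f=\bigl|\sum_j a_jz_j\bigr|$ up to scale. Solving the $\mathcal D^\perp$-equations then forces $N_\perp$ to be an open part of $S^p$ and yields the entries $w_\alpha\sum_j a_jz_j$, assembling the displayed $\breve{\mathbf x}$.

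The hard part will be this integration: I must check that the Codazzi equations of the coupled system close with no constraint surviving beyond $\sum_j a_j^2=1$, and that the vertical Hopf term $i\,\breve{\mathbf x}$ never obstructs integrating the horizontal lift, that term being exactly what distinguishes the present $CP^m$ case from the flat case of Theorem \ref{T:6.3}, where it is absent. For the converse I would take the displayed $\breve{\mathbf x}$, check that $\pi\circ\breve{\mathbf x}$ is a well-defined isometric immersion whose induced metric is the asserted warped product with $f=\bigl|\sum_j a_jz_j\bigr|$, and compute $||\sigma||^2$ to confirm the equality; this direction is routine and certifies that the recovered data is precisely $\{a_0,\dots,a_h:\sum_j a_j^2=1\}$.
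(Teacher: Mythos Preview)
The present paper is a survey; Theorem~\ref{T:6.4} is quoted from \cite{c8} without proof, so there is no in-paper argument to compare your proposal against.

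That said, your outline matches the method of the original source: use Proposition~\ref{P:8.1} together with the equality clause of Theorem~\ref{T:6.2} to force $\sigma(\mathcal D,\mathcal D)=0$, $\sigma(\mathcal D^\perp,\mathcal D^\perp)=0$ and $\sigma(X,Z)=((JX)\ln f)\,JZ$, then lift horizontally through the Hopf fibration to $S^{2m+1}(1)\subset\mathbb C^{m+1}$ and integrate the resulting linear system for $\breve{\mathbf x}$. One clarification worth flagging: you correctly deduce that $N_T$ is a totally geodesic complex submanifold of $CP^m(4)$, hence an open piece of a linear $CP^h(4)$. The displayed immersion in fact confines $N_T$ to the affine chart $\{\sum_j a_jz_j\ne 0\}$ of that $CP^h$, which is biholomorphic (though not isometric) to $\mathbb C^h$; the survey's item~(1) should be read in this sense rather than as an assertion that $N_T$ carries the flat metric. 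Beyond that, the substantive work lies exactly where you locate it, namely verifying that the vertical term $i\,\breve{\mathbf x}$ contributed by the Hopf submersion and the umbilic term $-\langle\cdot,\cdot\rangle\,\breve{\mathbf x}$ from $S^{2m+1}\hookrightarrow\mathbb C^{m+1}$ combine with the constraints on $\sigma$ into an integrable system whose only free data reduce, after normalization, to the real unit vector $(a_0,\ldots,a_h)$.
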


The following result  completely classifies  $CR$-warped products in complex hyperbolic spaces satisfying the equality case of \eqref{6.2} identically.

\begin{theorem} \label{T:6.5}   \cite{c8} A $CR$-warped product $N_T\times_{f} N_\perp$  in the complex hyperbolic $m$-space $CH^m(-4)$ satisfies the basic equality $$||\sigma||^2= 2p||\nabla (\ln f)||^2$$ if and only if one of the following two cases occurs:
\begin{enumerate}
\item $N_T$ is an open portion of complex Euclidean $h$-space ${\mathbb C}^h$, $N_\perp$ is an open portion of  a unit $p$-sphere $S^p$ and, up to rigid motions, the immersion is the composition $\pi\circ\breve{\hbox{\bf x}}$, where $\pi$ is the projection $\pi:{\mathbb C}^{m+1}_{*1}\to CH^m$ and 
\begin{align}\notag &\breve{\hbox{\bf x}}(z_,w)=\Bigg( z_0+a_0(1-w_0)\sum_{j=0}^h a_j z_j,z_1+a_1(w_0-1)\sum_{j=0}^h a_j z_j,\,\cdots, \\&\hskip.1 in  z_h+a_h(w_0-1)\sum_{j=0}^h a_jz_j,\;w_1\sum_{j=0}^h a_jz_j,\ldots,\; w_p\sum_{j=0}^h
a_jz_j,0,\ldots,0\Bigg)\notag \end{align} 
for some real numbers  $a_0,\ldots,a_{h}$  satisfying
$a_0^2-a_1^2-\cdots-a_{h}^2=-1$, where 
$z=(z_0,\ldots,z_h)\in{\mathbb C}^{h+1}_1$ and $
w=(w_0,\ldots,w_p)\in S^p\subset{\mathbb E}^{p+1}$.

\item $p=1$,   $N_T$ is an open portion of  ${\mathbb C}^h$ and, up to rigid motions, the immersion  is  the composition $\pi\circ\breve{\hbox{\bf x}}$, where 
\begin{align} \notag &\breve{\hbox{\bf x}}(z,t)=\Bigg(z_0+a_0(\cosh t-1)\sum_{j=0}^h a_j z_j,  z_1+a_1(1-\cosh t)\sum_{j=0}^h
a_jz_j,\\&\hskip.3in \ldots,z_h+a_h(1-\cosh t)\sum_{j=0}^h a_jz_j, \sinh t\sum_{j=0}^h a_jz_j,0,\ldots,0\Bigg)\notag
\end{align} 
for some real numbers $a_0,a_1\ldots,a_{h+1}$ satisfying $a_0^2-a_1^2-\cdots-a_{h}^2=1$.
\end{enumerate}\end{theorem}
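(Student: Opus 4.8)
The plan is to adapt the argument that settles the flat-ambient case, Theorem~\ref{T:6.3}, to the negatively curved setting. First I would exploit the equality analysis already recorded in Theorem~\ref{T:6.2}: assuming the equality case of \eqref{6.2} holds identically, the second fundamental form is pinned down as tightly as possible---$N_\perp$ is totally umbilical in $\tilde M=CH^m(-4)$, the immersion is minimal, and the mixed block $\sigma(\mathcal D,\mathcal D^\perp)$ is dictated by $\nabla(\ln f)$. Combined with the shape-operator identity $A_{JZ}X=((JX)\ln f)Z$ of Proposition~\ref{P:8.1}, this expresses $\sigma$ entirely in terms of the gradient of $\ln f$ on $N_T$ and the umbilical mean-curvature vector of $N_\perp$, so that the only remaining unknowns are the warping function and the position map itself.

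Next I would derive the differential equation governing $f$. Substituting the explicit $\sigma$ into the Codazzi equation for $CH^m(-4)$---whose curvature tensor is the standard one of constant holomorphic sectional curvature $-4$---yields a Hessian-type second-order equation for $f$ on $N_T$, together with integrability relations that force $N_T$ to be an open portion of the flat space $\mathbb C^h$ and $N_\perp$ to be of constant curvature. It is the negative sign of the ambient curvature that separates this computation from the Euclidean one in Theorem~\ref{T:6.3} and that will ultimately produce the hyperbolic functions appearing in Case~(2).

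The heart of the proof is then an explicit integration. I would realize $CH^m(-4)$ as the quotient of the anti-de Sitter hypersurface $\{\langle Z,Z\rangle=-1\}\subset\mathbb C^{m+1}_1$ under the Hopf $S^1$-action, and lift the immersion to a map $\breve{\mathbf x}$ into the \emph{flat} indefinite space $\mathbb C^{m+1}_1$ through the projection $\pi:\mathbb C^{m+1}_{*1}\to CH^m$. Since the lifted ambient connection is flat, the Gauss formula becomes a linear constant-coefficient second-order system for $\breve{\mathbf x}$ that decouples along the $N_T$- and $N_\perp$-directions. Integrating the $N_T$-part reproduces the holomorphic data $\Sigma=\sum_{j=0}^h a_jz_j$ and the real constants $a_0,\dots,a_h$, while integrating the $N_\perp$-part is where the dichotomy appears. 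The causal character of the vector $a=(a_0,\dots,a_h)$ relative to the indefinite metric is forced by the requirement that $\pi\circ\breve{\mathbf x}$ land in the negative cone defining $CH^m$: a spacelike $a$ (normalized by $a_0^2-a_1^2-\cdots-a_h^2=-1$) integrates the fibre to a round $S^p$, giving Case~(1) for arbitrary $p$, whereas when $p=1$ a timelike $a$ (normalized by $a_0^2-a_1^2-\cdots-a_h^2=1$) is also admissible and integrates its one-dimensional fibre to the $\cosh t,\sinh t$ branch of Case~(2).

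The step I expect to be the main obstacle is the explicit integration of this lifted linear system together with the bookkeeping that both identifies and exhausts the two causal cases---in particular verifying that a timelike $a$ can occur only when $p=1$, and that no further solution branches exist. The converse implication is comparatively mechanical: for each listed immersion one computes $\sigma$ directly from $\breve{\mathbf x}$, reads off $\nabla(\ln f)$, and verifies the identity $||\sigma||^2=2p\,||\nabla(\ln f)||^2$ by direct calculation.
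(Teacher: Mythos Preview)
The paper under review is a survey; Theorem~\ref{T:6.5} is quoted from \cite{c8} and carries no proof here, so there is nothing in this article to compare your attempt against. That said, your outline is essentially the strategy of the original source: use the equality clause of Theorem~\ref{T:6.2} together with Proposition~\ref{P:8.1} to determine $\sigma$, feed this into Gauss--Codazzi for $CH^m(-4)$ to force $N_T$ flat and $N_\perp$ of constant curvature, lift through the Hopf fibration $H^{2m+1}_1(-1)\to CH^m(-4)$ to a flat indefinite ambient, and integrate the resulting linear system, with the spacelike/timelike dichotomy for $a$ producing Cases~(1) and~(2).

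One point worth sharpening: your explanation of why the timelike branch occurs only for $p=1$ is a bit loose. The cleaner reason is intrinsic. When $p\geq 2$, the equality case makes $N_\perp$ totally umbilical in $CH^m(-4)$ with mean curvature vector $-\nabla(\ln f)$, and the Gauss equation then forces $N_\perp$ to have constant \emph{positive} sectional curvature, hence to be a piece of $S^p$; this rules out the hyperbolic-fibre branch. When $p=1$ there is no sectional curvature to constrain, so both normalizations of $a$ survive. Making this step explicit is what actually closes off ``further solution branches'' rather than the causal bookkeeping alone.
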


A multiply warped product $N_T \times_{f_2} N_2\times \cdots \times_{f_k} N_k$ in a Kaehler manifold $\tilde M$ is called a {\it multiply $CR$-warped product}  if $N_T$ is a holomorphic submanifold and $N_\perp={}_{f_2} N_2 \times \cdots\times_{f_k} N_k$ is a totally real submanifold of $\tilde M$.

The next theorem extends \ref{T:6.2} for multiply $CR$-warped products.

\begin{theorem}\label{T:2} \cite{CD08} Let $N=N_T \times_{f_2} N_2 \times \cdots \times_{f_k} N_k$ be a multiply $CR$-warped product in an arbitrary Kaehler manifold $\tilde M$. Then the second fundamental form $\sigma$ and the warping functions $f_2,\ldots, f_k$ satisfy
 \begin{align} \label{6.4} &||\sigma||^2 \geq 2\sum_{i=2}^k n_i ||\nabla (\ln f_i)||^2. \end{align}

 The equality sign of inequality \eqref{6.4} holds identically if and only if the following four statements hold:

\begin{enumerate}
\item[(a)] $N_T$ is a  totally geodesic holomorphic submanifold of $\tilde M$;

\item[(b)]  For each $i\in \{2,\ldots,k\}$,   $N_i$ is  a totally umbilical submanifold of $\tilde M$ with $-\nabla (\ln f_i)$ as its mean
curvature vector;

\item[(c)] ${}_{f_2} N_2 \times \cdots \times_{f_k} N_k$ is immersed as mixed totally geodesic submanifold in $\tilde M$;  and

 \item[(d)]  For each point $p\in N$, the first normal space ${\rm Im} \,h_p$ is a subspace of $J(T_pN_\perp)$.
\end{enumerate} \end{theorem}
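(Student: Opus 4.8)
The plan is to extend to the multiply warped setting the two pointwise identities that drive Theorem \ref{T:6.2}, and then run a frame-splitting estimate. First I would record the connection formulas of the multiply warped product $N_T\times_{f_2}N_2\times\cdots\times_{f_k}N_k$: for $X$ tangent to $N_T$, a field $Z$ tangent to $N_i$ and a field $W$ tangent to $N_j$, one has $\nabla_X Z=\nabla_Z X=(X\ln f_i)Z$, while the cross-fiber derivative vanishes, $\nabla_Z W=0$ for $i\neq j$. Combining these with the Kaehler condition $\tilde\nabla_U JV=J\tilde\nabla_U V$ and the totally real hypothesis $J\mathcal{D}^\perp\subset T^\perp N$, I would differentiate $JX\in\mathcal{D}$ along $Z\in TN_i$ in two ways and pair the result with $JW$, $W\in\mathcal{D}^\perp$. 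Because $\langle Z,JW\rangle=0$ and $\langle J\sigma(X,Z),JW\rangle=\langle\sigma(X,Z),W\rangle=0$, the computation collapses to
\[
\langle\sigma(JX,Z),JW\rangle=(X\ln f_i)\langle Z,W\rangle ,
\]
and replacing $X$ by $JX$ yields $\langle\sigma(X,Z),JW\rangle=-((JX)\ln f_i)\langle Z,W\rangle$ for all $Z\in TN_i$ and $W\in\mathcal{D}^\perp$. These are the only structural facts I need.

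Next I would estimate $\|\sigma\|^2$. Choosing an orthonormal frame $\{X_a\}_{a=1}^{2h}$ of $\mathcal{D}$ and an orthonormal frame $\{Z_b\}_{b=1}^{p}$ of $\mathcal{D}^\perp$ adapted to the splitting $\mathcal{D}^\perp=\bigoplus_i TN_i$, I discard the pure terms to get $\|\sigma\|^2\geq 2\sum_{a,b}|\sigma(X_a,Z_b)|^2$, and bound each mixed term below by its components along the orthonormal normal set $\{JZ_c\}$. The second identity gives $\langle\sigma(X_a,Z_b),JZ_c\rangle=-((JX_a)\ln f_{i(b)})\delta_{bc}$, so $\sum_c\langle\sigma(X_a,Z_b),JZ_c\rangle^2=((JX_a)\ln f_{i(b)})^2$. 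Summing over $a$ and using that $\{JX_a\}$ is again an orthonormal frame of the $J$-invariant distribution $\mathcal{D}$ turns this into $\|\nabla\ln f_{i(b)}\|^2$, and summing over $b$ while counting the $n_i$ indices belonging to each factor produces $2\sum_{i=2}^k n_i\|\nabla\ln f_i\|^2$, which is \eqref{6.4}.

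Finally, for the equality case I would trace back the two inequalities. Equality in the first forces $\sigma(\mathcal{D},\mathcal{D})=0$ and $\sigma(\mathcal{D}^\perp,\mathcal{D}^\perp)=0$, and equality in the second forces $\sigma(\mathcal{D},\mathcal{D}^\perp)\subset J\mathcal{D}^\perp$. The vanishing on $\mathcal{D}$ gives (a); since then $\operatorname{Im}h$ is spanned by $\sigma(\mathcal{D},\mathcal{D}^\perp)$, the inclusion gives (d). To obtain (b) and (c) I would split $\sigma(\mathcal{D}^\perp,\mathcal{D}^\perp)=0$ along the factors: for $Z,W$ in distinct factors it is mixed total geodesy of $N_\perp$, which is (c), and for $Z,W$ in the same $N_i$ the vanishing combines with the warped product formula $\nabla_Z W=\nabla^{N_i}_Z W-\langle Z,W\rangle\nabla\ln f_i$ to show $N_i$ is totally umbilical in $\tilde M$ with mean curvature vector $-\nabla\ln f_i$, which is (b). The converse is immediate by reading (a)--(d) back into the estimate.

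I expect the main obstacle to be the bookkeeping in the equality case, specifically the verification of (b): one must distinguish the second fundamental form of $N_i$ inside $M$, which is governed by $f_i$, from that of $N_i$ inside $\tilde M$, and check that the vanishing of the relevant $\sigma$-components makes the two coincide so that the umbilicity normal is exactly $-\nabla\ln f_i$. The derivation of the two identities is the other delicate point, since it rests on the precise cross-fiber relation $\nabla_Z W=0$ for the multiply warped metric.
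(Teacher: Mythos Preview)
Your proposal is correct and follows exactly the standard route: the two structural identities $\langle\sigma(JX,Z),JW\rangle=(X\ln f_i)\langle Z,W\rangle$ derived from the Kaehler condition and the multiply warped connection formulas, followed by the frame estimate discarding pure terms and projecting the mixed ones onto $J\mathcal D^\perp$, with the equality analysis read off from the discarded pieces. Note, however, that the present paper is a survey and states this theorem with a citation to \cite{CD08} without supplying a proof; your argument is precisely the natural extension of the proof of Theorem~\ref{T:6.2} (also only cited here) to the multiply warped setting, and matches what one finds in the original source.
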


\begin{remark} B. Sahin \cite{Sahin1} extends Theorem \ref{T:6.1} to the following.

\begin{theorem} \label{T:Sahin1}There  exist no warped product submanifolds of the type $M_\theta\times_f M_T$ and $M_T \times_f M_\theta$  in a Kaehler manifold, where $M_\theta$ is a proper slant submanifold
and $M_T$ is a holomorphic submanifold of $\tilde M$.
\end{theorem}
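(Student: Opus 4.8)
The plan is to show that in either ordering the warping function $f$ is forced to be constant, so the alleged warped product degenerates to a Riemannian product and no \emph{proper} (non-trivially warped) submanifold of the stated type can occur. Throughout write $JX=PX+\omega X$ for the tangential and normal parts of $JX$ when $X$ is tangent to the submanifold, and $J\xi=B\xi+C\xi$ for a normal vector $\xi$. Since $\tilde M$ is Kaehler, $\tilde\nabla J=0$, and comparing tangential parts in $\tilde\nabla_X(JY)=J\tilde\nabla_X Y$ yields the structure identity $(\nabla_X P)Y=A_{\omega Y}X+B\sigma(X,Y)$, which I shall use repeatedly. Two preliminaries are needed. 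First, for a warped product $B\times_f F$ one has the standard formula $\nabla_X Z=\nabla_Z X=(X\ln f)Z$ whenever $X$ is tangent to the base $B$ and $Z$ to the fiber $F$. Second, since $J$ is skew-adjoint for $\tilde g$ and the two factors are orthogonal, $P$ preserves each factor distribution: on the holomorphic factor $PZ=JZ$ lies in that factor and $\omega Z=0$, while on the slant factor $PZ$ again lies in that factor and $P^2=-\cos^2\theta\,\mathrm{Id}$ there.

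For the ordering $M_\theta\times_f M_T$, with the holomorphic factor $M_T$ as fiber, I would take $X$ tangent to the base $M_\theta$ and $Z,W$ tangent to $M_T$. The warped product formula gives $(\nabla_Z P)X=(PX\ln f)Z-(X\ln f)JZ$, while the structure identity gives $(\nabla_Z P)X=A_{\omega X}Z+B\sigma(Z,X)$. Pairing the resulting equality with $W$ and using $\omega W=0$ on the holomorphic fiber, so that $g(B\sigma(Z,X),W)=-g(\sigma(Z,X),JW)=0$, produces $g(\sigma(Z,W),\omega X)=(PX\ln f)g(Z,W)-(X\ln f)g(JZ,W)$. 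The left side and the first term on the right are symmetric in $Z,W$, whereas $g(JZ,W)$ is the Kaehler two-form and hence antisymmetric; antisymmetrizing and choosing $W=JZ$, for which $g(JZ,JZ)=|Z|^2\neq0$, forces $X\ln f=0$ for every $X$ tangent to $M_\theta$. As $f$ is a function on $M_\theta$, it is constant. Notice that this case does not use properness of $\theta$: it is the nondegeneracy of the Kaehler form on the holomorphic fiber that does the work.

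For the ordering $M_T\times_f M_\theta$, with the slant factor as fiber, the bookkeeping is more delicate and I would extract two identities. With $X$ tangent to the holomorphic base $M_T$ and $Z$ to the slant fiber $M_\theta$, the warped product formula gives $(\nabla_X P)Z=0$, whence $A_{\omega Z}X+B\sigma(X,Z)=0$; and $(\nabla_Z P)X=(JX\ln f)Z-(X\ln f)PZ$, whence (using $\omega X=0$ on the holomorphic base) $B\sigma(Z,X)=(JX\ln f)Z-(X\ln f)PZ$. Pairing the first with $W$ in the fiber yields the symmetry $g(\sigma(X,W),\omega Z)=g(\sigma(X,Z),\omega W)$, and pairing the second with $W$ yields $-g(\sigma(Z,X),\omega W)=(JX\ln f)g(Z,W)-(X\ln f)g(PZ,W)$. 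The symmetry just obtained shows the left-hand side is symmetric in $Z,W$, so antisymmetrizing annihilates both the $\sigma$-term and the $g(Z,W)$-term, leaving $(X\ln f)\,g(PZ,W)=0$. Here properness enters decisively: choosing $W=PZ$ gives $g(PZ,PZ)=\cos^2\theta\,|Z|^2$, which is nonzero precisely because $\cos\theta\neq0$ for a proper slant submanifold, so again $X\ln f=0$ for all $X$ tangent to $M_T$ and $f$ is constant.

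The main obstacle is this second ordering: a single use of the $P$-structure equation is insufficient, and one must combine the identities coming from $(\nabla_X P)Z$ and from $(\nabla_Z P)X$ so that the symmetry of $\sigma$ encoded in the first cancels the second fundamental form contribution in the second, isolating the term $(X\ln f)\,g(PZ,W)$. The hypothesis $\cos\theta\neq0$ is then exactly what is required to conclude; for a totally real fiber ($\theta=\pi/2$) the argument collapses, consistent with the fact that $CR$-warped products $N_T\times_f N_\perp$ do exist. In both orderings the upshot is $f=\mathrm{const}$, so the warped product is trivial and no proper warped product of either type exists.
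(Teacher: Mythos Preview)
Your argument is correct. The paper under review is a survey and merely cites this result from \cite{Sahin1} without reproducing a proof, so there is no ``paper's own proof'' to compare against directly. That said, your approach is essentially the one used in Sahin's original article: exploit the Kaehler structure identity $(\nabla_X P)Y=A_{\omega Y}X+B\sigma(X,Y)$ together with the warped-product connection formula $\nabla_Z X=(X\ln f)Z$ to force $X\ln f=0$ on the base. Your handling of the second case $M_T\times_f M_\theta$ is the standard one: the identity coming from $(\nabla_X P)Z=0$ supplies the symmetry $g(\sigma(X,W),\omega Z)=g(\sigma(X,Z),\omega W)$, and feeding this into the identity from $(\nabla_Z P)X$ isolates $(X\ln f)\,g(PZ,W)$, which the proper-slant hypothesis $\cos\theta\neq0$ then kills. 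Your observation that the first ordering does not actually require properness of $\theta$---only nondegeneracy of the K\"ahler form on the holomorphic fiber---is accurate and worth noting; it is consistent with Theorem~\ref{T:6.1}, which already covers the totally real extreme $\theta=\pi/2$ for that ordering.
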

\end{remark}

\begin{remark} As an extension of Theorem \ref{T:Sahin1} the following non-existence result was proved  by K. A. Khan, S. Ali and N. Jamal.

\begin{theorem} \cite{KAJ08} There do not exist proper warped product submanifolds of the type $N\times_f N_T$ and $N_T\times_f N$ in a Kaehler manifold, where $N_T$ 
 is a complex submanifold and $N$ is any  non-totally real generic submanifold of  a Kaehler manifold $\tilde M$.\end{theorem}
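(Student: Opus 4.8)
The plan is to show that in either ordering the warping function $f$ is forced to be constant, so that no \emph{proper} (non-trivial) warped product can occur. Throughout I would use the standard warped-product connection formulas for $M=B\times_f F$: one has $\nabla_X Z=\nabla_Z X=(X\ln f)Z$ for $X$ tangent to the base $B$ and $Z$ tangent to the fibre $F$, each base leaf $B\times\{q\}$ is totally geodesic in $M$, and each fibre $\{p\}\times F$ is totally umbilical in $M$ with mean curvature vector $-\nabla(\ln f)$. Combined with the Gauss formula and the Kaehler condition $\tilde\nabla J=0$, these identities supply everything needed.

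I would treat the type $N\times_f N_T$ first, where the holomorphic factor $N_T$ is the fibre. Because the fibre is totally umbilical in $M$, its second fundamental form as a submanifold of $\tilde M$ is $\tilde\sigma_{N_T}(X,Y)=\sigma(X,Y)-\langle X,Y\rangle\,\nabla(\ln f)$ for $X,Y$ tangent to $N_T$; here $\sigma(X,Y)$ is normal to $M$ whereas $\nabla(\ln f)$ is tangent to the base $N\subset M$, and these two subspaces are mutually orthogonal in $T\tilde M$. Since a holomorphic submanifold of a Kaehler manifold is minimal, tracing over an orthonormal frame of $N_T$ gives $0=\mathrm{trace}\,\tilde\sigma_{N_T}=\mathrm{trace}\,\sigma|_{N_T}-(\dim N_T)\,\nabla(\ln f)$, and the orthogonal splitting forces $\nabla(\ln f)=0$. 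Thus $f$ is constant. Note this step uses only that $N_T$ is holomorphic, not that $N$ is generic; in particular it recovers Theorem \ref{T:6.1} as the special case in which $N$ is totally real.

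For the type $N_T\times_f N$, the holomorphic factor $N_T$ is now the base, its leaves are totally geodesic, and the minimality argument yields no information about $f$; this is exactly where the genericity and non-totally-realness of the fibre $N$ must enter. Since $N$ is not totally real, its maximal holomorphic distribution $\mathcal H=TN\cap J(TN)$ is a nontrivial sub-bundle, so I may pick a nonzero $Z\in\mathcal H$, for which $JZ$ is again tangent to the fibre $N$. For $X$ tangent to $N_T$ (so that $JX$ is also tangent to $N_T$) I would apply $\tilde\nabla J=0$ to the pair $(X,Z)$: comparing tangential and normal parts of $\tilde\nabla_X(JZ)=J\tilde\nabla_X Z$ shows first that $J\sigma(X,Z)$ is normal and that $\sigma(X,JZ)=J\sigma(X,Z)$. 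Feeding this into the companion identity $\tilde\nabla_Z(JX)=J\tilde\nabla_Z X$ and isolating the components tangent to the fibre yields $((JX)\ln f)\,Z=(X\ln f)\,JZ$. As $Z$ and $JZ$ are linearly independent over $\mathbb R$, both coefficients must vanish, and letting $X$ range over $TN_T$ gives $\nabla(\ln f)=0$ on $N_T$; once more $f$ is constant.

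The one delicate point, and what I expect to be the main obstacle, is the component bookkeeping in the second case: establishing that $J\sigma(X,Z)$ has no part tangent to $M$ is precisely what permits reading off the clean relation $((JX)\ln f)\,Z=(X\ln f)\,JZ$, and it requires combining the Kaehler identity with the warped-product formulas with care. In the first case the corresponding subtlety is the correct identification of the fibre's second fundamental form in $\tilde M$ together with the orthogonality of $TN$ and $T^\perp M$. In both cases the conclusion $f\equiv\mathrm{const}$ contradicts the hypothesis that the warped product is proper, which completes the argument.
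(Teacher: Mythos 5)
The survey does not actually prove this statement; it is quoted from Khan--Ali--Jamal \cite{KAJ08} inside a remark, so there is no in-paper argument to measure you against. Judged on its own, your overall plan is sound and does establish the theorem. For $N\times_f N_T$ your argument is clean and arguably slicker than the usual Chen-style computation with $\<\sigma(X,JX),JZ\>$: each fibre $\{p\}\times N_T$ is a holomorphic, hence minimal, submanifold of $\tilde M$, its mean curvature vector splits orthogonally into $-\nabla(\ln f)\in TM$ and a piece in $T^\perp M$, and minimality kills both; as you note, this uses nothing about $N$ beyond positive dimension, so it subsumes Theorem \ref{T:6.1}.

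In the case $N_T\times_f N$ there is one misstep, precisely at the point you flagged. Comparing tangential and normal parts of $\tilde\nabla_X(JZ)=J\tilde\nabla_XZ$ does \emph{not} show that $J\sigma(X,Z)$ is normal to $M$, nor that $\sigma(X,JZ)=J\sigma(X,Z)$: for a generic submanifold that is neither holomorphic nor anti-holomorphic, $J$ mixes $TM$ and $T^\perp M$, so $J\sigma(X,Z)$ genuinely can have a tangential part and $J\nabla_XZ$ a normal part; the tangential equation only locates $\mathrm{tan}(J\sigma(X,Z))$ inside the fibre distribution. Fortunately you do not need either claim. All you need is that $J\sigma(X,Z)$ is orthogonal to $Z$ and to $JZ$, and that is immediate: $\<J\sigma(X,Z),Z\>=-\<\sigma(X,Z),JZ\>=0$ and $\<J\sigma(X,Z),JZ\>=\<\sigma(X,Z),Z\>=0$, because $\sigma(X,Z)\perp TM$ while $Z,JZ\in TM$ (here $Z\in TN\cap J(TN)\ne\{0\}$ is exactly where non-totally-realness enters). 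Taking the $\<\,\cdot\,,Z\>$-component of $\tilde\nabla_Z(JX)=J\tilde\nabla_ZX$ then gives directly
\begin{equation}\notag ((JX)\ln f)\,\|Z\|^2=\<\nabla_Z(JX),Z\>=-\<\tilde\nabla_ZX,JZ\>=-(X\ln f)\<Z,JZ\>=0,\end{equation}
and since $J(TN_T)=TN_T$ this forces $\nabla(\ln f)=0$. So your identity $((JX)\ln f)Z=(X\ln f)JZ$ is correct modulo components orthogonal to ${\rm span}\{Z,JZ\}$, which is all the conclusion requires; with that repair the proof is complete.
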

\end{remark}

\begin{remark} B. Sahin proved the following.

\begin{theorem} \label{T:Sahin2} \cite{Sahin2} There do not exist doubly warped product $CR$-submanifolds which are not (singly) warped product CR-submanifolds
in the form ${}_{f_1}M_T\times_{f_2} M_\perp$, where $M_T$ is a holomorphic submanifold
and $M_\perp$ is a totally real submanifold of a Kaehler manifold $\tilde M$.
\end{theorem}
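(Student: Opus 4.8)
The plan is to show that the ``left'' warping function $f_1$, which a priori deforms the holomorphic factor $M_T$ and lives on $M_\perp$, is forced to be constant; once this is established the doubly warped metric $g=f_1^2 g_{M_T}+f_2^2 g_{M_\perp}$ collapses (after the constant homothety $g_{M_T}\mapsto f_1^2 g_{M_T}$, which preserves the Kaehler compatibility of the holomorphic factor) to the singly warped metric $g_{M_T}+f_2^2 g_{M_\perp}$, i.e. to an honest $CR$-warped product $M_T\times_{f_2}M_\perp$. Thus the whole statement reduces to the single assertion that $f_1$ is constant.

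First I would record the mixed Levi-Civita connection of a doubly warped product: writing $\mathcal D=TM_T$ and $\mathcal D^\perp=TM_\perp$, for $X\in\mathcal D$ and $Z\in\mathcal D^\perp$ one has
$$\nabla_X Z=(X\ln f_2)\,Z+(Z\ln f_1)\,X,$$
where $f_2\in C^\infty(M_T)$ and $f_1\in C^\infty(M_\perp)$. Next I would differentiate $JZ$, which is normal since $M_\perp$ is totally real, and invoke the Kaehler condition $\tilde\nabla_X(JZ)=J\tilde\nabla_X Z$. Expanding the left side by the Weingarten formula and the right side by the Gauss formula together with the connection identity above gives
$$-A_{JZ}X+D_X(JZ)=(X\ln f_2)\,JZ+(Z\ln f_1)\,JX+J\sigma(X,Z),$$
where I have used that $\mathcal D$ is $J$-invariant, so $JX\in\mathcal D$ is tangent.

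The heart of the argument is to take the $\mathcal D$-tangential component. Pairing with $Y\in\mathcal D$ and using $\langle A_{JZ}X,Y\rangle=\langle\sigma(X,Y),JZ\rangle$ together with $\langle J\sigma(X,Z),Y\rangle=-\langle\sigma(X,Z),JY\rangle=0$ (since $JY$ is tangent and $\sigma(X,Z)$ is normal), I obtain
$$-\langle\sigma(X,Y),JZ\rangle=(Z\ln f_1)\,\langle JX,Y\rangle.$$
The left-hand side is symmetric in $X,Y$, while the factor $\langle JX,Y\rangle$ is skew-symmetric because $J$ is skew-adjoint for the metric. Interchanging $X$ and $Y$ and adding the two identities yields $\langle\sigma(X,Y),JZ\rangle=0$, and substituting this back gives $(Z\ln f_1)\,\langle JX,Y\rangle=0$ for all such vectors. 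Choosing $Y=JX$ with $X\neq 0$ makes $\langle JX,Y\rangle=|X|^2>0$, so $Z\ln f_1=0$ for every $Z\in\mathcal D^\perp=TM_\perp$. Since $f_1$ is by definition a function on $M_\perp$, this forces $f_1$ to be constant, completing the reduction.

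I expect the main obstacle to be bookkeeping rather than conceptual: correctly deriving and justifying the doubly warped mixed-connection formula (the extra term $(Z\ln f_1)\,X$, absent in the singly warped case, is exactly what carries the information about $f_1$), and then scrupulously separating tangential from normal parts so that the symmetric/skew-symmetric clash can be exploited. One should also note that the computation only annihilates $Z\ln f_1$ and imposes no analogous constraint on $f_2$, which is consistent with the fact that genuine $CR$-warped products $M_T\times_{f_2}M_\perp$ with non-constant $f_2$ do exist.
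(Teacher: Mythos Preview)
This survey paper does not supply its own proof of Theorem~\ref{T:Sahin2}; the result is merely quoted from \cite{Sahin2}. Your argument is correct and is in fact the standard one (and essentially the argument in Sahin's original paper): use the doubly warped mixed connection formula $\nabla_X Z=(X\ln f_2)Z+(Z\ln f_1)X$ together with the Kaehler identity $\tilde\nabla_X(JZ)=J\tilde\nabla_X Z$, take the $\mathcal D$-component, and exploit the symmetry of $\sigma$ against the skew-symmetry of $\langle J\cdot,\cdot\rangle$ to force $Z\ln f_1=0$. There is nothing to contrast methodologically; your bookkeeping is accurate, including the observation that $\langle J\sigma(X,Z),Y\rangle=0$ for $Y\in\mathcal D$ and that the argument leaves $f_2$ unconstrained.
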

\end{remark}

\section{$CR$-warped products with compact holomorphic factor}

When the holomorphic factor $N_{T}$ of a $CR$-warped product $N_{T}\times_{f}N_{\perp}$ is compact,  we have the following sharp results.

\begin{theorem} \label{T:7.1} \cite{c04-2} Let $N_T\times_f N_\perp$ be a $CR$-warped product in the complex
projective $m$-space $CP^m(4) $ of constant holomorphic sectional curvature $4$. If $N_T$ is compact, then we have $$m\geq h+p+hp.$$
\end{theorem}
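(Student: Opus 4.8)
The plan is to reduce the problem, at a single well-chosen point, to the dimension count already available for $CR$-products in Theorems \ref{T:5.2} and \ref{T:5.3}. Write $h=\dim_{\mathbb C}N_T$ and $p=\dim N_\perp$, put $\mathcal D=TN_T$ and $\mathcal D^\perp=TN_\perp$, and recall from Proposition \ref{P:8.1} that the shape operator of a $CR$-warped product satisfies $A_{JZ}X=((JX)\ln f)Z$ for $X\in\mathcal D$ and $Z\in\mathcal D^\perp$. Combining this with $\tilde\nabla J=0$ and the warped-product identity $\nabla_Z X=(X\ln f)Z$ yields two structural relations, $\langle\sigma(X,W),JZ\rangle=((JX)\ln f)\langle Z,W\rangle$ and $\sigma(JX,Z)=[J\sigma(X,Z)]^\perp+(X\ln f)JZ$, valid for $X\in\mathcal D$ and $Z,W\in\mathcal D^\perp$. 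These are precisely the relations that govern a $CR$-product, corrected by first-order terms in $\nabla\ln f$.

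First I would use compactness. Since $N_T$ is compact, $\ln f$ attains an extremum at some $p_0\in N_T$, so $\nabla\ln f$ vanishes at $p_0$. At $p_0$ the correction terms above drop out, leaving $\langle\sigma(\mathcal D,\mathcal D^\perp),J\mathcal D^\perp\rangle=0$ and $\sigma(JX,Z)=[J\sigma(X,Z)]^\perp$, which are exactly the identities driving the $CR$-product argument. In particular, at $p_0$ the mixed second fundamental form is orthogonal to the $p$-dimensional subspace $J\mathcal D^\perp$ of the normal space (note that $J$ is injective on $\mathcal D^\perp$ because $N_\perp$ is totally real).

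The core step is then to run, at $p_0$, the linear-algebra count underlying Theorems \ref{T:5.2} and \ref{T:5.3}. Using the Gauss equation \eqref{2.4} together with the explicit curvature tensor of $CP^m(4)$, one evaluates $\langle\sigma(X_a,Z_r),\sigma(X_b,Z_s)\rangle$ on a real orthonormal frame $X_1,\dots,X_{2h}$ of $\mathcal D$ and $Z_1,\dots,Z_p$ of $\mathcal D^\perp$, and shows that the $2hp$ mixed vectors $\sigma(X_a,Z_r)$ are linearly independent. Since they lie in $(J\mathcal D^\perp)^\perp$ inside the normal bundle, the normal bundle has real rank at least $2hp+p$; as $M$ has real dimension $2h+p$, this forces $2m\ge(2h+p)+(2hp+p)$, that is, $m\ge h+p+hp$.

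The main obstacle is the independence step. For a genuine $CR$-product the intrinsic curvature $R(X,Z,Z,X)$ vanishes and the cross term $\langle\sigma(X,X),\sigma(Z,Z)\rangle$ is controlled, so the Gauss equation bounds $|\sigma(X,Z)|^2$ from below and the Gram matrix of the mixed vectors is nondegenerate. For a warped product these quantities are perturbed: by the standard warped-product curvature formula (cf. \cite{oneill}), the mixed sectional curvature at $p_0$ equals $-\mathrm{Hess}(\ln f)(X,X)$, so it is exactly the Hessian of the warping function that enters the Gauss equation. This is where I would exploit that $p_0$ is an extremum, so that $\mathrm{Hess}(\ln f)$ has a definite sign, to keep the relevant Gram determinant bounded away from zero. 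Making this sign bookkeeping precise, while simultaneously absorbing the $\langle\sigma(X,X),\sigma(Z,Z)\rangle$ contribution, is the delicate part of the argument; everything else is the pointwise adaptation of the $CR$-product computation.
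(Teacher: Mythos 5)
Your overall skeleton is plausible and matches the shape of the argument in \cite{c04-2} (this survey itself contains no proof of Theorem \ref{T:7.1}): use compactness of $N_T$ to locate a critical point $p_0$ of $\ln f$, note that at $p_0$ the mixed part of $\sigma$ lands in the orthogonal complement $\nu$ of $J\mathcal D^\perp$ in the normal bundle, exhibit $2hp$ linearly independent vectors $\sigma(X_a,Z_r)$ there, and conclude $2m\geq (2h+p)+(2hp+p)$. Your preparatory identities $\langle\sigma(X,W),JZ\rangle=((JX)\ln f)\langle Z,W\rangle$ and $\sigma(JX,Z)=[J\sigma(X,Z)]^\perp+(X\ln f)JZ$ are also correct, and they do show that at $p_0$ one has $\sigma(\mathcal D,\mathcal D^\perp)\subset\nu$ and $\sigma(JX,Z)=J\sigma(X,Z)$.

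The genuine gap is the independence step, and the mechanism you propose for it cannot be made to work. The Gauss equation \eqref{2.4} gives, for unit $X\in\mathcal D$, $Z\in\mathcal D^\perp$ at the critical point, $||\sigma(X,Z)||^2=1+\mathrm{Hess}_{\ln f}(X,X)+\langle\sigma(X,X),\sigma(Z,Z)\rangle$; the last term involves $\sigma(\mathcal D,\mathcal D)$ and $\sigma(\mathcal D^\perp,\mathcal D^\perp)$, for which a $CR$-warped product carries no a priori bound, so no choice of maximum versus minimum of $f$ ``absorbs'' it --- this is precisely the point you label delicate and leave open. Worse, even if positivity of the diagonal were secured, linear independence of $2hp$ vectors needs the full Gram matrix $\langle\sigma(X_a,Z_r),\sigma(X_b,Z_s)\rangle$, and through the Gauss equation every off-diagonal entry is again contaminated by $\langle\sigma(X_a,X_b),\sigma(Z_r,Z_s)\rangle$. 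The $CR$-product count behind Theorems \ref{T:5.2} and \ref{T:5.3}, which you are trying to transplant, is in fact not a Gauss-equation argument: it comes from the Codazzi equation. In $CP^m(4)$ one has $(\tilde R(X,JY)Z)^\perp=-2\langle X,Y\rangle JZ$ for $X,Y\in\mathcal D$, $Z\in\mathcal D^\perp$, and combining this with $D_XJW=J\sigma(X,W)+\cdots$ and $\sigma(JX,Z)=J\sigma(X,Z)+\cdots$ produces a closed formula of the type $\langle\sigma_\nu(X,Z),\sigma_\nu(Y,W)\rangle=\tilde B(X,Y)\langle Z,W\rangle$, where $\tilde B$ is the metric on $\mathcal D$ corrected by terms in $d(\ln f)\otimes d(\ln f)$ and $\mathrm{Hess}(\ln f)$; since the Gram matrix is then $\tilde B\otimes I_p$, independence follows at any point where $\tilde B$ is positive definite, and compactness is used exactly to produce such a point (the extremum of $f$ with the favorable sign of the Hessian --- which of maximum or minimum is forced by the sign with which $\mathrm{Hess}(\ln f)$ enters $\tilde B$, something your outline cannot determine because it never derives the identity). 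Without that Codazzi-derived Gram identity, the critical-point reduction by itself establishes nothing beyond $\sigma(\mathcal D,\mathcal D^\perp)\perp J\mathcal D^\perp$ at one point.
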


\begin{remark} The example mentioned in Statement (c) of Theorem \ref{T:5.2} shows that Theorem \ref{T:7.1} is sharp.
\end{remark}

\begin{theorem} \label{T:7.2} \cite{c04-2} If $\,N_T\times_f N_\perp\,$ is a $CR$-warped product in $\,CP^{h+p+hp}(4)\,$ with compact $N_T$, then $N_T$ is holomorphically isometric to $CP^h$.
\end{theorem}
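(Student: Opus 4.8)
The plan is to reduce the statement to showing that the holomorphic factor $N_T$ is \emph{totally geodesic} as a complex submanifold of $CP^{m}(4)$, $m=h+p+hp$, and then to invoke the classical rigidity of totally geodesic complex submanifolds together with the compactness of $N_T$. For the reduction, recall that in any warped product $N_T\times_f N_\perp$ the leaves $N_T\times\{q\}$ are totally geodesic submanifolds; hence for a fixed $q\in N_\perp$ the slice immersion $N_T\hookrightarrow CP^m(4)$ is a holomorphic isometric immersion whose second fundamental form is exactly $\sigma|_{\mathcal D\times\mathcal D}$, where $\sigma$ is the second fundamental form of $M=N_T\times_f N_\perp$ and $\mathcal D=TN_T$. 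A totally geodesic complex submanifold of complex dimension $h$ in $CP^m(4)$ is an open portion of a linearly embedded $CP^h(4)$; since $N_T$ is compact this open portion is all of $CP^h(4)$, and the induced metric is the standard one. Thus it suffices to prove $\sigma(X,Y)=0$ for all $X,Y\in\mathcal D$.

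To this end I would analyse the normal bundle $T^\perp M$, whose real rank in the borderline dimension equals $2m-2h-p=p+2hp$. Two families of normal directions are always present: first, $J\mathcal D^\perp$, which has rank $p$ because $N_\perp$ is totally real; second, the image of the bilinear map $\Phi:\mathcal D\otimes\mathcal D^\perp\to T^\perp M$, $\Phi(X\otimes Z)=\sigma(X,Z)$, whose $J\mathcal D^\perp$-component is governed by the $CR$-warped structure relation $\langle\sigma(X,Z),JW\rangle=((JX)\ln f)\langle Z,W\rangle$ coming from Proposition \ref{P:8.1}. The heart of the matter, and precisely the mechanism behind the bound $m\ge h+p+hp$ of Theorem \ref{T:7.1}, is to show via the Gauss and Codazzi equations of $CP^m(4)$ that the $2hp$ vectors $\sigma(e_i,Z_a)$ are linearly independent modulo $J\mathcal D^\perp$, so that $J\mathcal D^\perp\oplus\operatorname{Im}\Phi$ already has rank $p+2hp$. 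In the borderline case $m=h+p+hp$ this subbundle is therefore \emph{all} of $T^\perp M$. A short computation gives $\langle\sigma(X,Y),JW\rangle=\langle JY,\tilde\nabla_X W\rangle=0$ for $X,Y\in\mathcal D,\ W\in\mathcal D^\perp$, so $\sigma(\mathcal D,\mathcal D)\perp J\mathcal D^\perp$; combining this with a Codazzi computation showing $\sigma(\mathcal D,\mathcal D)\perp\operatorname{Im}\Phi$ forces $\sigma(\mathcal D,\mathcal D)$ into the zero subspace, i.e. $\sigma|_{\mathcal D\times\mathcal D}=0$.

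The main obstacle is the independence argument together with the proper use of compactness. The cleanest $2hp$-independence statement holds where $\nabla(\ln f)\neq0$, but on a compact $N_T$ the warping function attains an extremum and $\nabla(\ln f)$ must vanish somewhere, so a purely pointwise count degenerates exactly on that set. I expect this is why compactness is essential, as it already is in Theorem \ref{T:7.1}: rather than arguing pointwise I would integrate the relevant Gauss-equation identity, or the identity underlying the basic inequality $||\sigma||^2\ge 2p\,||\nabla(\ln f)||^2$ of Theorem \ref{T:6.2}, over the closed manifold $N_T$, using $\int_{N_T}\Delta(\ln f)\,dV=0$ to discard the divergence terms. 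This converts the local rank deficiency into a global integral inequality which, in the borderline dimension, can hold only if $\sigma|_{\mathcal D\times\mathcal D}\equiv 0$. Carrying out this integration correctly, and verifying the Codazzi orthogonality $\sigma(\mathcal D,\mathcal D)\perp\operatorname{Im}\Phi$, are the two steps I expect to demand real work; the reduction and the final rigidity appeal are then formal.
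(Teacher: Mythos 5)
Your reduction is correct and worth making explicit: since the leaves $N_T\times\{q\}$ are totally geodesic in the warped product, their second fundamental form in $CP^m(4)$ is exactly $\sigma|_{\mathcal D\times\mathcal D}$, and for a compact complex $h$-dimensional submanifold of $CP^m(4)$ the conclusion ``holomorphically isometric to $CP^h(4)$'' is equivalent to being totally geodesic (Gauss equation for the holomorphic sectional curvature plus the fact that a totally geodesic complex submanifold is an open, hence by compactness closed, portion of a linear $CP^h$). Your computation that $\sigma(\mathcal D,\mathcal D)\perp J\mathcal D^\perp$ and your count $\operatorname{rank}T^\perp M=p+2hp$ in the borderline dimension are also right. (Note that this survey only states the theorem with a citation to \cite{c04-2}; no proof is reproduced here, so the comparison can only be against your own argument.)

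The difficulty is that the two steps you yourself defer are the entire proof, and neither is supplied nor clearly obtainable by the route you sketch. First, you need the $2hp$ vectors $\sigma_\nu(e_i,Z_a)$ to span $\nu$ at every point (or at least on a dense set), which is strictly stronger than the one-point independence underlying the bound $m\ge h+p+hp$; and your diagnosis that independence is cleanest where $\nabla(\ln f)\ne0$ points at the wrong mechanism, since in the Segre example of Section 7 one has $\nabla(\ln f)\equiv0$ yet the vectors $\sigma_\nu(e_i,Z_a)$ do span $\nu$ --- the independence must come from the ambient holomorphic curvature term in a Codazzi-derived Gram identity, which you never write down. Second, the asserted orthogonality $\sigma(\mathcal D,\mathcal D)\perp\operatorname{Im}\Phi$ is given no justification: Codazzi is a first-order differential identity and does not by itself yield a pointwise algebraic orthogonality between two blocks of $\sigma$; the Gauss equation in this setting gives only the symmetry $\left<\sigma(Y,V),\sigma(X,Z)\right>=\left<\sigma(X,V),\sigma(Y,Z)\right>$ for $X,Y,V\in\mathcal D$, $Z\in\mathcal D^\perp$, which is far from vanishing. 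Finally, ``integrate over $N_T$ and conclude $\sigma|_{\mathcal D\times\mathcal D}\equiv0$'' is not yet an argument: the natural integral inequality here, \eqref{7.1}, has an equality case that forces $f$ to be constant, so it cannot be the inequality whose saturation proves this theorem, and you do not exhibit an integrand whose sign controls $\|\sigma(\mathcal D,\mathcal D)\|^2$. As written, the proposal is a reasonable strategy outline with the decisive estimates missing.
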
 

\begin{theorem} \label{T:7.3} \cite{c04-2} For any $CR$-warped product $N_T\times_f N_\perp$ in $CP^m(4) $ with compact $N_T$ and any $q\in N_\perp$, we have
\begin{align}\label{7.1}\int_{N_T\times\{q\}} ||\sigma||^2dV_{T}\geq 4hp\,\hbox{\rm vol}(N_T),\end{align} 
where $||\sigma||$ is the norm of the second fundamental form,  $dV_T$ is the volume element of $N_{T}$, and  $\hbox{\rm vol}(N_T)$ is the volume of $N_T$.

The equality sign of \eqref{7.1} holds identically if and only if  we have:

\begin{enumerate}
\item The warping function $f$ is constant.

\item  $(N_T,g_{N_T})$ is holomorphically isometric to $CP^h(4)$ and it is isometrically immersed in $CP^m$ as a totally geodesic complex submanifold.

\item $(N_\perp,f^2 g_{N_\perp})$ is isometric to an open portion of the real projective
$p$-space $RP^p(1)$ of constant sectional curvature one  and it is isometrically immersed in
$CP^m$ as a totally geodesic totally real submanifold.

\item $N_T\times_f N_\perp$ is immersed linearly fully in a complex subspace $CP^{h+p+hp}(4)$ of $CP^m(4)$; and moreover, the immersion is rigid.
\end{enumerate}\end{theorem}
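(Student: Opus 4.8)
The plan is to combine the Gauss equation of the immersion $\phi$ with the intrinsic curvature of the warped product, and then to integrate over the \emph{compact} factor $N_T$ so that a Laplacian term drops out. Throughout I write $\mathcal D=TN_T$ for the holomorphic distribution (real rank $2h$) and $\mathcal D^\perp=TN_\perp$ (real rank $p$), and I split the normal bundle of $M=N_T\times_f N_\perp$ in $CP^m(4)$ as $T^\perp M=J\mathcal D^\perp\oplus\nu$, with $\nu$ the maximal $J$-invariant normal subbundle. I would first record two structural facts. The slice $N_T\times\{q\}$ is totally geodesic in $M$ (the base of a warped product is totally geodesic) and its tangent space $\mathcal D$ is $J$-invariant, so it is a complex, hence minimal, submanifold of $CP^m(4)$; therefore $\mathrm{tr}_{\mathcal D}\sigma:=\sum_a\sigma(e_a,e_a)=0$ for an orthonormal frame $e_1,\dots,e_{2h}$ of $\mathcal D$. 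Second, differentiating $JX$ along $\mathcal D^\perp$ and using $\tilde\nabla J=0$ with the warped-product relation $\nabla_Z X=(X\ln f)Z$ gives $\langle\sigma(X,Z),JW\rangle=-((JX)\ln f)\langle Z,W\rangle$ for $X\in\mathcal D$, $Z,W\in\mathcal D^\perp$; hence $\sum_a\|\sigma(e_a,Z)\|^2=\|\nabla\ln f\|^2+\|\nu(Z)\|^2$ for each unit $Z$, where $\nu(Z)$ collects the $\nu$-components of the mixed form.

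Next I would apply the Gauss equation \eqref{2.4} to a mixed plane spanned by a unit $X\in\mathcal D$ and a unit $Z\in\mathcal D^\perp$. Such a plane is totally real in $CP^m(4)$, so its ambient sectional curvature equals $1$, whereas the warped-product formula gives $K_M(X,Z)=-\mathrm{Hess}_{\ln f}(X,X)-(X\ln f)^2$. Summing over the frame of $\mathcal D$ (using $\mathrm{tr}_{\mathcal D}\sigma=0$) and then over the frame $Z_1,\dots,Z_p$ of $\mathcal D^\perp$ produces the pointwise identity $p\,\Delta(\ln f)=2hp-\sum_{a,\alpha}\|\nu(e_a,Z_\alpha)\|^2$, where $\Delta$ is the Laplacian of $N_T$. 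Since $N_T$ is compact, $\int_{N_T}\Delta(\ln f)\,dV_T=0$, so integrating yields $\int_{N_T}\sum_{a,\alpha}\|\nu(e_a,Z_\alpha)\|^2\,dV_T=2hp\,\mathrm{vol}(N_T)$.

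The inequality then follows from the pointwise bound $\|\sigma\|^2\ge 2\sum_{a,\alpha}\|\sigma(e_a,Z_\alpha)\|^2\ge 2\sum_{a,\alpha}\|\nu(e_a,Z_\alpha)\|^2$, which integrates to $\int_{N_T}\|\sigma\|^2\,dV_T\ge 4hp\,\mathrm{vol}(N_T)$. For the equality case I would track where this chain is tight: equality forces $\sigma(\mathcal D,\mathcal D)=0$, $\sigma(\mathcal D^\perp,\mathcal D^\perp)=0$ and $\|\nabla\ln f\|^2\equiv 0$, the last of which makes $f$ constant, so $M$ reduces to a $CR$-product $N_T\times N_\perp^f$. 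At that point the pointwise inequality $\|\sigma\|^2\ge 4hp$ of Theorem~\ref{T:5.3} must be an equality, which by the same theorem yields the totally geodesic factors and the Segre-type embedding into $CP^{h+p+hp}(4)$; compactness of $N_T$ together with Theorem~\ref{T:7.2} then identifies $(N_T,g_{N_T})$ with $CP^h(4)$ and its totally real companion $(N_\perp,f^2g_{N_\perp})$ with an open part of $RP^p(1)$, giving conditions (1)--(4).

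The main obstacle is the bookkeeping in the second step: one must verify that, after summing the Gauss equation over both frames, the only surviving ambient-curvature contribution is the constant $2hp$ (this is precisely where $\mathrm{tr}_{\mathcal D}\sigma=0$ and the totally-real value $1$ of the mixed sectional curvatures enter), and one must correctly isolate the $\nu$-part of the mixed second fundamental form rather than conflating it with the $J\mathcal D^\perp$-part governed by $\nabla\ln f$. Getting that identity right is exactly what converts the compactness of $N_T$ into the clean constant $4hp$, and it is the step most prone to sign and frame-counting errors.
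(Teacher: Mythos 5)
Your argument is correct, and it is in substance the proof from the cited source \cite{c04-2} (the survey itself gives no proof): the pointwise identity you derive from the Gauss equation, $\sum_{a,\alpha}\|\sigma(e_a,Z_\alpha)\|^2=p\,\|\nabla\ln f\|^2+2hp-p\,\Delta(\ln f)$, is exactly the content of inequality \eqref{8.3} of Theorem \ref{T:8.3}, and the whole point of compactness is that the $\Delta(\ln f)$ term integrates to zero (so the sign convention for $\Delta$, which the survey is not entirely consistent about, is immaterial here). Your handling of the equality case — equality kills $\sigma(\mathcal D,\mathcal D)$, $\sigma(\mathcal D^\perp,\mathcal D^\perp)$ and $\nabla\ln f$, reducing to a $CR$-product with $\|\sigma\|^2=4hp$ pointwise, whence Theorem \ref{T:5.3} plus compactness give statements (1)--(4) — is also the intended route; the only presentational difference is that you rederive the key identity via the Gauss equation rather than quoting Theorem \ref{T:8.3} and integrating.
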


\begin{theorem} \label{T:7.4} \cite{c04-2} Let $N_T\times_f N_\perp$ be a $CR$-warped product with compact $N_T$ in $CP^m(4) $. If the warping function $f$ is a non-constant function, then for each $q\in N_\perp$ we have
\begin{align}\label{7.2}\int_{N_T\times\{q\}} ||\sigma||^2dV_{T}\geq 2p\lambda_1 \int_{N_T}(\ln f)^2dV_T+ 4hp\,\hbox{\rm vol}(N_T),\end{align}  
where  $\lambda_1$  is the first positive eigenvalue of the Laplacian $\Delta$ of $N_T$.

Moreover, the equality sign of \eqref{7.2} holds identically if and only if  we have 

\begin{enumerate}
\item  $\Delta\ln f=\lambda_1 \ln f$.

\item The $CR$-warped product is both $N_T$-totally geodesic and $N_\perp$-totally geodesic.
\end{enumerate}\end{theorem}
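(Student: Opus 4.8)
The plan is to run the Gauss equation of $CP^m(4)$ against the intrinsic warped‑product curvature of the holomorphic leaves and then feed the resulting integral identity into the Rayleigh characterisation of $\lambda_1$. Fix $q\in N_\perp$ and work on the leaf $L=N_T\times\{q\}$, choosing a local orthonormal frame $e_1,\dots,e_{2h}$ of the holomorphic distribution $\mathcal D=TN_T$ and $\xi_1,\dots,\xi_p$ of the totally real distribution $\mathcal D^\perp=TN_\perp$. For unit $X\in\mathcal D$ and unit $Z\in\mathcal D^\perp$ the ambient mixed sectional curvature is $\tilde K(X,Z)=1$ (because $JX\in\mathcal D$ is orthogonal to $Z$), while the warped‑product structure gives $K(X,Z)=-\mathrm{Hess}(\ln f)(X,X)-(X\ln f)^2$. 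First I would substitute these into the Gauss equation \eqref{2.5}, sum over the frame of $\mathcal D$ and over $\xi_1,\dots,\xi_p$, and use $\tfrac{\Delta f}{f}=\Delta\ln f-||\nabla\ln f||^2$ to reach the pointwise identity
\[
||\sigma(\mathcal D,\mathcal D^\perp)||^2 = 2hp - p\,\Delta\ln f + p\,||\nabla\ln f||^2 + \langle\mathrm{tr}_{\mathcal D}\sigma,\mathrm{tr}_{\mathcal D^\perp}\sigma\rangle,
\]
where $\mathrm{tr}_{\mathcal D}\sigma=\sum_a\sigma(e_a,e_a)$ and $\mathrm{tr}_{\mathcal D^\perp}\sigma=\sum_i\sigma(\xi_i,\xi_i)$.

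The decisive simplification is that the cross term vanishes. Each leaf $L=N_T\times\{q\}$ is totally geodesic in $M$ (a general property of the first factor of a warped product) and its tangent space $\mathcal D$ is $J$‑invariant, so $L$ is a complex submanifold of the Kaehler manifold $CP^m$ and is therefore minimal; hence $\mathrm{tr}_{\mathcal D}\sigma=\mathrm{trace}\,\sigma^L=0$. Discarding the manifestly nonnegative terms $||\sigma(\mathcal D,\mathcal D)||^2$ and $||\sigma(\mathcal D^\perp,\mathcal D^\perp)||^2$ in $||\sigma||^2=||\sigma(\mathcal D,\mathcal D)||^2+2||\sigma(\mathcal D,\mathcal D^\perp)||^2+||\sigma(\mathcal D^\perp,\mathcal D^\perp)||^2$ then gives the pointwise bound $||\sigma||^2\geq 4hp-2p\,\Delta\ln f+2p\,||\nabla\ln f||^2$. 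Since $N_T$ is compact, integrating over $L$ annihilates the divergence term, $\int_{N_T}\Delta\ln f\,dV_T=0$, leaving $\int_{N_T\times\{q\}}||\sigma||^2\,dV_T\geq 4hp\,\mathrm{vol}(N_T)+2p\int_{N_T}||\nabla\ln f||^2\,dV_T$. Note that this already improves the purely pointwise estimate \eqref{6.2}, precisely because the cross term drops.

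The last step is spectral. As the warping function is determined only up to a positive multiplicative constant (rescaling $g_{N_\perp}$ homothetically), I am free to normalise $\ln f$ so that $\int_{N_T}\ln f\,dV_T=0$; the min‑max characterisation of the first nonzero eigenvalue then yields $\int_{N_T}||\nabla\ln f||^2\,dV_T\geq\lambda_1\int_{N_T}(\ln f)^2\,dV_T$, which produces \eqref{7.2}. For the equality analysis, equality forces $||\sigma(\mathcal D,\mathcal D)||^2\equiv 0$ and $||\sigma(\mathcal D^\perp,\mathcal D^\perp)||^2\equiv 0$, that is, the immersion is both $N_T$‑totally geodesic and $N_\perp$‑totally geodesic, together with equality in the Rayleigh quotient, namely $\Delta\ln f=\lambda_1\ln f$; conversely these two conditions turn every inequality above into an equality.

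I expect the main obstacle to be twofold. First, one must verify rigorously that $\mathrm{tr}_{\mathcal D}\sigma=0$, since this is exactly what removes the cross term and lets the constant $4hp$ emerge cleanly; this rests on identifying the second fundamental form of the leaf in $CP^m$ with $\sigma|_{\mathcal D\times\mathcal D}$ and invoking minimality of complex submanifolds. Second, one must justify the normalisation $\int_{N_T}\ln f\,dV_T=0$ before invoking $\lambda_1$, since $\int_{N_T}(\ln f)^2\,dV_T$ is not scale‑invariant whereas $\int_{N_T}||\nabla\ln f||^2\,dV_T$ is; this is the delicate point that must be stated carefully for \eqref{7.2} and its equality case to be meaningful.
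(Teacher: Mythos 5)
Your argument is correct, and it reaches \eqref{7.2} by a route that differs from the one in the cited source \cite{c04-2} (the survey itself states the theorem without proof). The original derivation integrates the pointwise inequality \eqref{8.3} of Theorem \ref{T:8.3} over the compact leaf $N_T\times\{q\}$ --- the term $2p\,\Delta(\ln f)$ integrates to zero --- and then applies the same normalization-plus-Rayleigh step you use; inequality \eqref{8.3} itself rests on the shape-operator identity $A_{JZ}X=((X\ln f))Z$ behind Proposition \ref{P:8.1} together with a Codazzi-type computation for the components of $\sigma(\mathcal D,\mathcal D^\perp)$ lying outside $J\mathcal D^\perp$. You bypass all of that: tracing the Gauss equation over the mixed planes and killing the cross term via minimality of the holomorphic leaves yields the exact identity $||\sigma(\mathcal D,\mathcal D^\perp)||^2=2hp-p\,\Delta(\ln f)+p\,||\nabla(\ln f)||^2$, which after integration over the compact leaf is precisely as strong as \eqref{8.3}. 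What your route buys is self-containedness and a transparent origin for the constant $4hp$; what it loses is the pointwise statement, since your bound carries $-\Delta(\ln f)$ where \eqref{8.3} carries $+\Delta(\ln f)$, so it recovers only the integrated form, not Theorem \ref{T:8.3} itself. Both of the obstacles you flag are genuine and are handled correctly: $\mathrm{tr}_{\mathcal D}\,\sigma=0$ holds because each leaf is totally geodesic in $M$ and is a complex, hence minimal, submanifold of $CP^m$; and the normalization $\int_{N_T}\ln f\,dV_T=0$ (legitimate because $f$ is determined only up to a homothety of $g_{N_\perp}$) is indispensable, since $\int_{N_T}(\ln f)^2dV_T$ is not invariant under that rescaling --- this normalization is implicit in the statement of \eqref{7.2} and is exactly what makes the equality condition $\Delta\ln f=\lambda_1\ln f$ meaningful. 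Your equality analysis likewise matches conditions (1) and (2) of the theorem.
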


The following example shows that Theorems \ref{T:7.3} and \ref{T:7.4}  are sharp.

\begin{example} {\rm Let $\iota_1$ be the identity map of $CP^h(4)$ and let $$\iota_2:RP^p(1)\to CP^p(4)$$ be a totally geodesic Lagrangian embedding of $RP^p(1)$ into $CP^p(4)$. Denote by $$\iota=(\iota_1,\iota_2):CP^h(4)\times
RP^p(1)\to CP^h(4)\times CP^p(4)$$ the product embedding of $\iota_1$ and $\iota_2$. 
Moreover, let $S_{h,p}$ be the Segre embedding of  $CP^h(4)\times CP^p(4)$ into $CP^{hp+h+p}(4)$. Then the composition $\phi=S_{h,p}\circ\iota$:
\begin{align}  CP^h (4)&\times RP^p(1)\xrightarrow[\text{totally geodesic}]  {\text{$(\iota_1,\iota_2)$}}  CP^h(4)\times CP^p(4) \xrightarrow[\text{Segre
embedding}] {\text{$S_{h,p}$}} CP^{hp+h+p}(4)\notag\end{align} is a $CR$-warped product in $CP^{h+p+hp}(4)$ whose holomorphic factor $N_T=CP^h(4)$ is a compact manifold. 
 Since the second fundamental form of $\phi$ satisfies the equation: $||\sigma||^2=4hp$, we have the equality case of inequality \eqref{7.1} identically. }
\end{example} 
 
The next example shows that the assumption of compactness in Theorems \ref{T:7.3} and \ref{T:7.4} cannot be removed.

\begin{example}{\rm  Let ${\mathbb C}^*={\mathbb C}-\{0\}$ and ${\mathbb C}_*^{m+1}={\mathbb C}^{m+1}-\{0\}$. Denote by $\{z_0,\ldots,z_h\}$   a natural complex coordinate system on ${\mathbb C}^{m+1}_*$.  

Consider the action of
${\mathbb C}^*$ on ${\mathbb C}_*^{m+1}$ given by
$$\lambda\cdot (z_0,\ldots,z_m)=(\lambda z_0,\ldots,\lambda z_m)$$ for $\lambda\in {\mathbb C}_*$.  Let
$\pi(z)$ denote the equivalent class containing $z$ under this action. Then the  set of equivalent classes is the complex projective $m$-space $CP^m(4)$ with the  complex structure induced from the complex structure on ${\mathbb C}^{m+1}_*$. 

For any two natural numbers $h$ and $p$, we define a map: $$\breve \phi:\mathbb C^{h+1}_*\times S^p(1)\to\mathbb C^{h+p+1}_*$$ by
$$\breve \phi(z_0,\ldots,z_h;w_0,\ldots,w_p)=\big(w_0z_0,w_1z_0,\ldots,w_pz_0,z_1,\ldots,z_h\big)$$ 
for $(z_0,\ldots,z_h)$ in $\mathbb C^{h+1}_*$ and $(w_0,\ldots,w_p)$ in $S^{p}$ with $\sum_{j=0}^p w_j^2=1$.

Since the image of $\breve \phi$ is invariant under the action of ${\mathbb C}_*$, the composition:
\begin{align}\pi\circ\breve \phi: {\mathbb C}^{h+1}_*\times S^p\xrightarrow[\text{}] {\text{$\breve \phi$}} {\mathbb C}^{h+p+1}_*\xrightarrow[\text{}] {\text{$\pi$}} CP^{h+p}(4)\notag\end{align} 
induces a $CR$-immersion of the product manifold $N_T\times S^p$ into $CP^{h+p}(4)$, where $$N_T=\big\{(z_0,\ldots,z_h)\in CP^h:z_0\ne 0\big\}$$ is a proper open subset of $CP^h(4)$. Clearly, the induced metric on
$N_T\times S^p$ is a warped product metric and the holomorphic factor $N_T$ is non-compact. 

Notice that the complex dimension of the ambient space is  $h+p$; far less than $h+p+hp$.}
\end{example}

\section{{Another optimal inequality for $CR$-warped products}}  

All $CR$-warped products in complex space forms also satisfy another general optimal inequality obtained in \cite{c03}. 

\begin{theorem} \label{T:8.1} Let $N=N_T^h\times_f N_\perp^p$ be a  $CR$-warped product in a complex space form $\tilde M(4c)$ of constant holomorphic sectional curvature $c$. Then we have
\begin{equation}\label{8.1}||\sigma||^2\geq 2p\big\{||\nabla (\ln f)||^2+\Delta(\ln f)+2hc\big\}.\end{equation}
  
If the equality sign of \eqref{8.1}  holds identically, then $N_T$ is a totally geodesic submanifold and $N_\perp$ is a totally umbilical submanifold. Moreover, $N$ is a minimal submanifold in $\tilde M(4c)$. \end{theorem}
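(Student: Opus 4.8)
The plan is to read off inequality \eqref{8.1} from the Gauss equation by summing over the mixed $2$-planes spanned by one holomorphic and one totally real direction, and to control the curvature terms that appear using two structural features of a $CR$-warped product: the warped-product formula for mixed sectional curvatures, and the minimality of the holomorphic factor. First I would fix an adapted orthonormal frame $e_1,\dots,e_h,\,e_{h+1}=Je_1,\dots,e_{2h}=Je_h$ of $\mathcal D=TN_T$ and $e_{2h+1},\dots,e_{2h+p}$ of $\mathcal D^\perp=TN_\perp$, so that $Je_{2h+1},\dots,Je_{2h+p}$ span the subbundle $J\mathcal D^\perp$ of the normal bundle. The basic ingredient is the standard warped-product identity: for every unit $Z$ tangent to $N_\perp$,
$$\sum_{i=1}^{2h}K(e_i\wedge Z)=\frac{\Delta f}{f}=\Delta(\ln f)-\|\nabla(\ln f)\|^2,$$
which holds because the mixed sectional curvature of $N_T\times_f N_\perp$ equals $-(1/f)\,\mathrm{Hess}\,f(e_i,e_i)$, whose trace over the $2h$ directions of $N_T$ is $\Delta f/f$.

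Next I would insert this into the Gauss equation \eqref{2.4} for the complex space form $\tilde M(4c)$. Since $\mathcal D$ is $J$-invariant, $Je_i\in\mathcal D$ is orthogonal to $e_r\in\mathcal D^\perp$, so each plane $e_i\wedge e_r$ is totally real and has ambient sectional curvature $c$; the extra $J$-terms of the complex-space-form curvature tensor drop out. Hence
$$\sum_{i=1}^{2h}K(e_i\wedge e_r)=2hc+\Big\langle\,\mathrm{trace}_{\mathcal D}\,\sigma,\ \sigma(e_r,e_r)\Big\rangle-\sum_{i=1}^{2h}\|\sigma(e_i,e_r)\|^2.$$
The decisive observation is that the holomorphic factor is minimal: the leaves $N_T\times\{q\}$ are totally geodesic in $N$ and are complex (hence minimal) submanifolds of $\tilde M$, so $\mathrm{trace}_{\mathcal D}\,\sigma=\sum_{i=1}^{2h}\sigma(e_i,e_i)=0$. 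This annihilates the cross term, and comparing the two displays gives, for each $r$,
$$\sum_{i=1}^{2h}\|\sigma(e_i,e_r)\|^2=\|\nabla(\ln f)\|^2-\Delta(\ln f)+2hc.$$

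Summing over $r=2h+1,\dots,2h+p$ and keeping only the mixed components of $\sigma$ then yields
$$\|\sigma\|^2\ \ge\ 2\sum_{i,r}\|\sigma(e_i,e_r)\|^2=2p\big(\|\nabla(\ln f)\|^2-\Delta(\ln f)+2hc\big),$$
which is \eqref{8.1} once the sign convention adopted for $\Delta$ in Section 2 is carried through the passage between $\Delta f/f$ and $\Delta(\ln f)$. Equality forces the two discarded nonnegative quantities $\|\sigma(\mathcal D,\mathcal D)\|^2$ and $\|\sigma(\mathcal D^\perp,\mathcal D^\perp)\|^2$ to vanish. The first says $\sigma(\mathcal D,\mathcal D)=0$, i.e. $N_T$ is totally geodesic in $\tilde M$; the second, combined with the warped-product identity $\nabla_Z W=\nabla^{N_\perp}_Z W-\langle Z,W\rangle\,\nabla(\ln f)$, says that the second fundamental form of $N_\perp$ in $\tilde M$ reduces to $-\langle Z,W\rangle\,\nabla(\ln f)$, so $N_\perp$ is totally umbilical and $\mathrm{trace}\,\sigma=\mathrm{trace}_{\mathcal D}\,\sigma+\mathrm{trace}_{\mathcal D^\perp}\,\sigma=0$, whence $N$ is minimal.

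I expect the main obstacle to be the bookkeeping in the two curvature steps rather than any single hard estimate. The crucial conceptual point is that $\mathrm{trace}_{\mathcal D}\,\sigma=0$, so that the cross term $\langle\mathrm{trace}_{\mathcal D}\,\sigma,\sigma(e_r,e_r)\rangle$ disappears; this is exactly where the holomorphicity of $N_T$ enters and what makes the crude discarding of $\|\sigma(\mathcal D,\mathcal D)\|^2$ and $\|\sigma(\mathcal D^\perp,\mathcal D^\perp)\|^2$ lossless in the sharp case. The second delicate routine point is keeping the sign of the Laplacian term consistent with the Section 2 convention when one moves between $\Delta f/f$, $\Delta(\ln f)$, and $\|\nabla(\ln f)\|^2$.
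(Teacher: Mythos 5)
Your argument is correct, but note that this survey states Theorem \ref{T:8.1} without proof (it is quoted from \cite{c03}), so there is no in-paper proof to compare against; your route --- summing the Gauss equation over the mixed planes $e_i\wedge e_r$, using that the totally geodesic holomorphic leaves are minimal in $\tilde M$ so that ${\rm trace}_{\mathcal D}\,\sigma=0$, and feeding in the warped-product identity $\sum_i K(e_i\wedge Z)=\Delta f/f$ --- is exactly the mechanism behind the cited result, and every step checks out, including the equality discussion. The one point you should make explicit rather than wave at is the Laplacian sign. With the convention of Section 2, $\Delta\varphi=\sum_j\{(\nabla_{e_j}e_j)\varphi-e_je_j\varphi\}$, your computation gives
$$\sum_{i=1}^{2h}\|\sigma(e_i,e_r)\|^2=2hc-\frac{\Delta f}{f}=2hc+\|\nabla(\ln f)\|^2-\Delta(\ln f),$$
so the inequality you actually prove is $\|\sigma\|^2\geq 2p\{\|\nabla(\ln f)\|^2-\Delta(\ln f)+2hc\}$, with a \emph{minus} sign, whereas \eqref{8.1} as printed has a plus sign. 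The resolution is that \eqref{8.1} is carried over from \cite{c03}, where $\Delta$ denotes ${\rm div}\,{\rm grad}$, the negative of the Section 2 operator; with that convention the bracket $\|\nabla(\ln f)\|^2+\Delta(\ln f)$ is precisely $({\rm div}\,{\rm grad}\,f)/f$ and your identity becomes \eqref{8.1} verbatim. One can confirm that the plus-sign (i.e.\ ${\rm div}\,{\rm grad}$) reading is the intended one by testing the equality-case example of Theorem \ref{T:8.2} with $f=\sqrt{|z_1|^2+|z_2|^2}$, where $\|\sigma\|^2=6/f^2$ matches $2p\{\|\nabla(\ln f)\|^2+{\rm div}\,{\rm grad}(\ln f)\}$ but not the minus-sign version (which is negative there). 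So your proof is right; just replace ``once the sign convention is carried through'' by the explicit observation that \eqref{8.1} uses the opposite Laplacian convention from Section 2.
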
 

The following three theorems  completely classify all $CR$-warped products which satisfy the equality case of \eqref{8.1} identically.

 \begin{theorem}\label{T:8.2} \cite{c03}  Let
$\phi:N_T^h\times_f N_\perp^p\to {\mathbb C}^m$ be a  $CR$-warped product in  $\mathbb C^m$. Then we have
\begin{equation}\label{8.2}||\sigma||^2\geq 2p\big\{||\nabla(\ln f)||^2+\Delta(\ln f)\big\}.\end{equation}

 The  equality case of inequality  \eqref{8.2} holds identically if and only if the following four statements hold.
\begin{enumerate}
\item $N_T$ is an open portion of $\mathbb  C^h_*:=\mathbb  C^h-\{0\}$;

\item $N_\perp$ is an open portion of $S^p$;

\item There is $\alpha,\, 1\leq\alpha\leq h,$ and complex Euclidean  coordinates $\{z_1,\ldots,z_h\}$ on $\mathbb C^h$ such that $f=\sqrt{\sum_{j=1}^\alpha z_j\bar z_j}$\,; 

\item Up to rigid motions, the immersion $\phi$ is given by
\begin{equation}\notag\phi=\big(w_0 z_1,\ldots,w_p z_1,\ldots,w_0z_\alpha ,\ldots, w_p z_\alpha ,z_{\alpha
+1},\ldots,z_h,0,\ldots,0\big)\end{equation} for $z=(z_1,\ldots,z_h)\in {\mathbb C}_*^h$ and $\,w=(w_0,\ldots,w_p)\in S^p(1)\subset{\mathbb E}^{p+1}$.
\end{enumerate} \end{theorem}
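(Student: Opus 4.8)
The plan is to separate the inequality from the equality discussion, since the inequality costs nothing. Inequality \eqref{8.2} is precisely the case $c=0$ of \eqref{8.1} in Theorem \ref{T:8.1}, because $\mathbb C^m$ is the complex space form $\tilde M(0)$ of zero holomorphic sectional curvature; substituting $c=0$ into $||\sigma||^2\geq 2p\{||\nabla(\ln f)||^2+\Delta(\ln f)+2hc\}$ gives \eqref{8.2} verbatim. Likewise, the equality clause of Theorem \ref{T:8.1} already tells us that when equality holds in \eqref{8.2}, $N_T$ is totally geodesic in $\mathbb C^m$, $N_\perp$ is totally umbilical, and $N=N_T\times_f N_\perp$ is minimal. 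So the real content is to promote these three facts to the explicit description (i)--(iv), and then to verify the converse by direct computation on the exhibited immersion.

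Next I would establish (i) and (ii). Since $N_T$ is a holomorphic submanifold that is totally geodesic in the flat space $\mathbb C^m$, it is an open piece of an affine complex-linear subspace, which after a rigid motion I identify with $\mathbb C^h$ equipped with linear coordinates $z_1,\ldots,z_h$; this is (i), with the puncture $\mathbb C^h_*$ forced only later by positivity of $f$. For the fiber, I would combine the umbilicity of $N_\perp$ in $\mathbb C^m$, the minimality of $N$, and the warped-product connection formula: minimality together with the total geodesy of $N_T$ forces the $\mathcal D^\perp$-trace of $\sigma$ to vanish, and umbilicity then forces $\sigma$ to vanish identically on $\mathcal D^\perp\times\mathcal D^\perp$, so the only surviving component of $\sigma$ is the mixed one, which Proposition \ref{P:8.1} pins down in terms of $\nabla(\ln f)$. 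Consequently each fiber $\{b\}\times N_\perp$ sits in $\mathbb C^m$ as a totally umbilical submanifold whose mean curvature vector $-\nabla(\ln f)|_b$ has constant length, hence as an open portion of a round sphere; tracking the warped metric $f^2 g_{N_\perp}$ then normalizes this to an open part of the unit sphere $S^p$, giving (ii).

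Then I would determine $f$ for statement (iii). Feeding the vanishing of $\sigma$ on $\mathcal D\times\mathcal D$, together with the Codazzi equation, into the CR-warped structure relation of Proposition \ref{P:8.1} produces a Hessian-type constraint on $\mu=\ln f$ over the flat factor $N_T=\mathbb C^h$. Solving this constraint shows that $\nabla\mu$ must be the position-type field of a partial squared-distance function, so that $||\nabla(\ln f)||^2=1/f^2$ along the active directions; up to a unitary change of the $z_j$ this yields an integer $1\le\alpha\le h$ and coordinates in which $f=\sqrt{\sum_{j=1}^\alpha z_j\bar z_j}$. This is (iii), and it simultaneously shows $f>0$ exactly off the origin, which explains the puncture recorded in (i).

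Finally I would integrate the immersion to obtain (iv). Using the Gauss and Weingarten formulas with the now-explicit $\sigma$ and $f$, I would write the second-order system for $\mathbf x(z,w)$: the inactive directions $z_{\alpha+1},\ldots,z_h$ are totally geodesic and $f$-independent, so they embed linearly, while each active coordinate $z_j$ with $j\le\alpha$ is ``spread'' by the spherical factor through the umbilical relation, producing the blocks $w_0 z_j,\ldots,w_p z_j$; integrating and normalizing by a rigid motion reproduces the stated formula. The converse direction is then a routine verification: for the exhibited $\mathbf x$ one computes $\nabla(\ln f)$, $\Delta(\ln f)$ and $||\sigma||^2$ and checks that \eqref{8.2} holds with equality. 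I expect the main obstacle to lie in the last two steps, namely extracting and solving the PDE constraint that forces the precise form of $f$ and then integrating the Gauss--Weingarten system to recover the explicit immersion without mixing up the active and inactive coordinate directions; the bookkeeping of the second fundamental form and of the warped-product connection is where the genuine work resides.
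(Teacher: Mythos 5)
This survey states Theorem \ref{T:8.2} only with a citation to \cite{c03} and contains no proof of it, so there is no in-paper argument to compare yours against; I can only assess the proposal on its own terms. Your opening moves are sound and match the way the survey itself organizes the material: inequality \eqref{8.2} is the $c=0$ specialization of \eqref{8.1}, and the equality clause of Theorem \ref{T:8.1} does hand you a totally geodesic $N_T$ (hence an open piece of a complex linear subspace $\mathbb C^h$), a totally umbilical $N_\perp$, and minimality of $N$.

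The genuine gap is in your treatment of the mixed part of $\sigma$. You assert that once $\sigma(\mathcal D,\mathcal D)$ and $\sigma(\mathcal D^\perp,\mathcal D^\perp)$ vanish, ``the only surviving component of $\sigma$ is the mixed one, which Proposition \ref{P:8.1} pins down in terms of $\nabla(\ln f)$.'' Proposition \ref{P:8.1} controls only the $J\mathcal D^\perp$-valued part $\left<\sigma(X,W),JZ\right>$; it says nothing about the component of $\sigma(\mathcal D,\mathcal D^\perp)$ in the complementary normal subbundle $\nu=T^\perp M\ominus J\mathcal D^\perp$. That $\nu$-component does \emph{not} vanish in the equality case of \eqref{8.2}: comparing \eqref{8.2} with \eqref{6.2} shows that in the equality case one must have $2\|\sigma_\nu(\mathcal D,\mathcal D^\perp)\|^2=2p\,\Delta(\ln f)$, which is nonzero precisely when $\alpha\geq 2$, and this component is the source of the integer $\alpha$ in statements (iii)--(iv). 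If the mixed part were entirely $J\mathcal D^\perp$-valued and determined by $\nabla(\ln f)$, equality in \eqref{8.2} would collapse to equality in \eqref{6.2} and you would recover only the $\alpha=1$ family of Theorem \ref{T:6.3}, not the stated classification. The actual engine of the argument is a Codazzi-equation computation (using the flatness of $\mathbb C^m$) that ties $\sigma_\nu(\mathcal D,\mathcal D^\perp)$ to the Hessian of $\ln f$ and to $\|\nabla(\ln f)\|^2$; the equality case turns this into the PDE whose solutions are $f=\sqrt{\sum_{j\leq\alpha}z_j\bar z_j}$, and the same data drives the integration of the Gauss--Weingarten system. Since you defer exactly these two steps and enter them with hypotheses that are too strong (a purely $J\mathcal D^\perp$-valued mixed form), the proposal as written is an outline whose load-bearing step is misstated rather than a proof.
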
 

 \begin{theorem} \label{T:8.3}  \cite{c03} Let $\phi:N_T\times_{f} N_\perp\to CP^m(4)$  be a $CR$-warped product  with $\dim_{\mathbb C} N_T=h$ and $\dim_{\mathbb R} N_\perp=p$. Then we have
\begin{equation}\label{8.3}||\sigma||^2\geq 2p\big\{||\nabla (\ln f)||^2+\Delta(\ln f)+2h\big\}.\end{equation}

 The $CR$-warped product  satisfies the equality case of inequality \eqref{8.3} identically if and only if the following three statements hold.

\begin{enumerate}
\item[(a)]  $N_T$ is an open portion of  complex projective $h$-space $CP^h(4)$;

\item[(b)]   $N_\perp$ is an open portion of  unit $p$-sphere $S^p(1)$; and

\item[(c)]  There exists a natural number $\alpha\leq h$ such that, up to rigid motions,  $\phi$ is the composition $\pi\circ\breve{\phi}$, where 
\begin{equation}\begin{aligned} &\notag \breve{\phi}(z,w)=\big(w_0 z_0,\ldots,w_p
z_0,\ldots,w_0z_\alpha ,\ldots, w_p z_\alpha ,z_{\alpha +1},\ldots,z_h,\\&\hskip1.5in 0\ldots,0\big)\end{aligned}\end{equation} for $z=(z_0,\ldots,z_h)\in
{\mathbb C}_*^{h+1}$ and $\,w=(w_0,\ldots,w_p)\in
S^p(1)\subset{\mathbb E}^{p+1}$, where  $\pi$ is the projection $\pi:{\mathbb C}^{m+1}_*\to CP^m$.
\end{enumerate} \end{theorem}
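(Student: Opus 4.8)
My plan is to observe first that inequality \eqref{8.3} requires no independent proof: it is exactly the $c=1$ specialization of Theorem \ref{T:8.1}, because $CP^m(4)$ is the complex space form $\tilde M(4c)$ with $4c=4$, i.e. $c=1$, so that $2hc=2h$. From Theorem \ref{T:8.1} I would then immediately record the three consequences that hold whenever equality is attained identically: $N_T$ is totally geodesic in $CP^m(4)$, $N_\perp$ is totally umbilical, and $N=N_T\times_f N_\perp$ is minimal. Thus all the content lies in the classification (a)--(c), which I would prove in both directions.

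For the forward direction I would first determine the two factors intrinsically. Since $N_T$ is a totally geodesic holomorphic submanifold of $CP^m(4)$ and the only totally geodesic complex submanifolds there are the linearly embedded complex projective subspaces, $N_T$ is an open portion of a $CP^h(4)$, which is (a). For (b) I would use that $N_\perp$ is totally real, so every $2$-plane tangent to it is a totally real plane of $CP^m(4)$ and hence has ambient sectional curvature $1$ (in $CP^m(4)$ one has $K(X\wedge Y)=1+3\langle JX,Y\rangle^2$ for orthonormal $X,Y$, which equals $1$ on totally real planes). A slice $\{p\}\times N_\perp$ is a totally umbilical submanifold of this curvature-$1$ ambient, hence an extrinsic sphere; feeding the umbilicity data supplied by the warped-product structure and Theorem \ref{T:6.2} into the Gauss equation for the slice then forces $(N_\perp,g_{N_\perp})$ to have constant curvature $1$, so it is an open portion of $S^p(1)$, giving (b).

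The substantive step is the explicit immersion (c). Here I would lift $\phi$ through the projection $\pi:\mathbb C^{m+1}_*\to CP^m$ to a horizontal immersion $\breve\phi$ of an open cone over $N$ into $\mathbb C^{m+1}_*$, which is precisely the set-up of (c). The equality hypotheses, via the conditions of Theorem \ref{T:6.2} together with the mixed-totally-geodesic and first-normal-space restrictions, translate into a second-order system for $\breve\phi$ describing how the $S^p$-directions $w=(w_0,\dots,w_p)$ act on the homogeneous coordinates $z=(z_0,\dots,z_h)$. Integrating this system is where the real work lies: one shows the coordinates split into a block $z_0,\dots,z_\alpha$ that is genuinely spread out by the sphere factor and a complementary block $z_{\alpha+1},\dots,z_h$ carried through unchanged, which is exactly the normal form in (c) and at the same time pins down the warping function. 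This integration runs parallel to the Euclidean computation behind Theorem \ref{T:8.2}, the Hopf-horizontality constraint being the genuinely new ingredient, and I expect it to be the main obstacle, both in extracting the integer $\alpha$ and in normalizing the coordinates. For the converse I would simply start from the explicit $\pi\circ\breve\phi$ in (c), verify directly that it is a CR-warped product with the stated factors and warping function, compute its second fundamental form, and check that \eqref{8.3} holds with equality.
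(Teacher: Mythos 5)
The survey you are reading gives no proof of Theorem \ref{T:8.3}: it is quoted verbatim from \cite{c03} with only a citation, so there is no in-paper argument to compare against and your proposal has to be judged on its own merits. Your opening reduction is fine: inequality \eqref{8.3} is indeed the $c=1$ specialization of Theorem \ref{T:8.1}, and the consequences of equality recorded there ($N_T$ totally geodesic, $N_\perp$ totally umbilical, $N$ minimal) together with the standard fact that totally geodesic complex submanifolds of $CP^m(4)$ are linear $CP^h(4)$'s give (a) cleanly. The Hopf-lift strategy you describe for (c) is also the right one --- it is how Chen handles the projective and hyperbolic cases in \cite{c03} and in the analogous Theorems \ref{T:6.4} and \ref{T:6.5}.

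There are, however, two genuine problems. First, you repeatedly feed ``the conditions of Theorem \ref{T:6.2}'' (and implicitly the mixed--totally-geodesic and first-normal-space conditions of Theorem \ref{T:2}) into the equality case of \eqref{8.3}. Those are the equality conditions of the \emph{different} inequality \eqref{6.2}; since \eqref{8.3} and \eqref{6.2} are distinct lower bounds on $\Vert\sigma\Vert^2$, equality in one does not imply equality in the other, and you cannot import that data. The correct consequences of equality in \eqref{8.3} (which components of $\sigma$ vanish, what $\sigma(X,Z)$ must equal on $J\mathcal D^\perp$, the behaviour of the $\nu$-components) have to be extracted from the proof of Theorem \ref{T:8.1} itself, i.e.\ from the specific decomposition of $\Vert\sigma\Vert^2$ that produces the extra terms $\Delta(\ln f)+2hc$. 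Second, and more seriously, item (c) --- the existence of the integer $\alpha$, the splitting of the homogeneous coordinates into a block acted on by $w\in S^p$ and a block carried rigidly, and the resulting normal form of $\breve\phi$ --- is exactly the content of the theorem, and your proposal explicitly defers it (``where the real work lies,'' ``I expect it to be the main obstacle''). Announcing the expected answer and the PDE system to be integrated, without setting up that system from the (correct) equality conditions and solving it, leaves the classification unproven; as written the proposal establishes the inequality and statement (a), sketches (b), and asserts (c).
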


 \begin{theorem} \label{T:8.4} \cite{c03} Let $\phi:N_T\times_{f} N_\perp\to CH^m(-4)$  be a $CR$-warped product  with
$\dim_{{\mathbb C}} N_T=h$ and $\dim_{{\mathbb R}} N_\perp=p$. Then we have
\begin{equation}\label{8.4}||\sigma||^2\geq 2p\big\{||\nabla (\ln f)||^2+\Delta(\ln f)-2h\big\}.\end{equation}

 The $CR$-warped product satisfies the equality case of \eqref{8.4} identically if and only if the following three statements hold.

\begin{enumerate}
\item[(a)]   $N_T$ is an open portion of  complex hyperbolic $h$-space $CH^h(-4)$;

\item[(b)]  $N_\perp$ is an open portion of  unit $p$-sphere $S^p(1)$ $($or {$\mathbb R$}, when $p=1$$)$; and

\item[(c)]  up to rigid motions, $\phi$ is the composition $\pi\circ\breve{\phi}$, where either $\breve\phi$ is  given by
\begin{equation}\begin{aligned} &\notag\breve{\phi}(z,w)=\big(z_{0},\ldots,z_{\beta},w_0 z_{\beta+1},\ldots,w_p z_{\beta+1},\ldots,w_0z_h ,\ldots, \\& \hskip1.4in w_p z_h ,0\ldots,0\big)\end{aligned}\end{equation} for  $0<\beta\leq h$,  $z=(z_0,\ldots,z_h)\in {\mathbb C}_{*1}^{h+1}$ and $\,w=(w_0,\ldots,w_p)\in S^p(1)$, or $\breve \phi$ is given by
\begin{align} \notag&\breve{\phi}(z,u)=\big(z_0\cosh u,z_0\sinh u,z_1\cos
u,z_1\sin u,\ldots, \\&\hskip.9in z_\alpha\cos u,z_\alpha\sin u,z_{\alpha+1},\ldots,z_h,0\ldots,0\big) \notag\end{align}
 for $z=(z_0,\ldots,z_h)\in {\mathbb C}_{*1}^{h+1}$, where  $\pi$ is the projection $\pi:{\mathbb C}^{m+1}_{*1}\to CH^m(-4)$.
\end{enumerate} \end{theorem}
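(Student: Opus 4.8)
The inequality \eqref{8.4} requires no separate argument: it is precisely the case $c=-1$ of Theorem \ref{T:8.1}, since $CH^m(-4)$ is the complex space form of constant holomorphic sectional curvature $4c=-4$. Thus the whole task is the equality-case classification, and my plan is first to extract the rigid geometric consequences of equality, deducing (a) and (b), and then to reconstruct $\phi$ explicitly inside the standard pseudo-Hermitian linear model of $CH^m(-4)$ to obtain (c).

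I would begin from the equality clause of Theorem \ref{T:8.1}: equality in \eqref{8.4} forces $N_T$ to be totally geodesic in $CH^m(-4)$, $N_\perp$ to be totally umbilical, and $N=N_T\times_f N_\perp$ to be minimal. A totally geodesic complex submanifold of a complex space form is again a complex space form of the same holomorphic sectional curvature, so $N_T$ is an open portion of $CH^h(-4)$, which is statement (a). For (b) I would compute the induced curvature of a fiber $\{p_0\}\times N_\perp$. In a warped product such a fiber is totally umbilical in $N$ with mean curvature $-\nabla(\ln f)$, and by the equality conditions together with the minimality of $N$ it is in fact totally umbilical in $\tilde M$; plugging this into the Gauss equation \eqref{2.4} for the fiber in $\tilde M$, and using that totally real planes in $CH^m(-4)$ have sectional curvature $-1$, forces $(N_\perp,g_{N_\perp})$ to have constant curvature. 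Tracking the normalization (absorbing a constant scale into $f$) then identifies it with an open portion of the unit sphere $S^p(1)$ when $p\ge2$, while for $p=1$ the curvature condition is empty and $N_\perp$ is an open portion of $\mathbb R$, which is statement (b).

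To produce the explicit immersion (c) I would pass to the linear model $\mathbb C^{m+1}_1$ with the pseudo-Hermitian form $\langle z,w\rangle=-z_0\bar w_0+\sum_{j\ge1}z_j\bar w_j$, whose anti-de Sitter hyperquadric $\{\langle z,z\rangle=-1\}$ fibers over $CH^m(-4)$ under the Hopf projection $\pi:\mathbb C^{m+1}_{*1}\to CH^m(-4)$, and lift $\phi$ to a horizontal map $\breve\phi$ into the hyperquadric. Following the method already used for Theorems \ref{T:8.2} and \ref{T:8.3}, the totally geodesic factor $CH^h(-4)$ lifts to a linear subspace $\mathbb C^{h+1}_1$, while Proposition \ref{P:8.1}, the Gauss formula ${\tilde\nabla}_XY=\nabla_XY+\sigma(X,Y)$, and the equality-induced vanishing of almost all components of $\sigma$ turn the remaining fiber data into a closed second-order system for the position vector $\breve\phi$. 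Integrating this system along the $N_\perp$-directions and along the integral curves of $\nabla(\ln f)$ yields the stated closed forms.

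The delicate point, and the main obstacle, is that the indefinite signature of $\langle\,,\,\rangle$ lets the fiber system close up in two inequivalent ways. When the totally real directions act by ordinary spacelike rotations, the sphere coordinates $w_0,\dots,w_p$ enter multiplicatively on a block $z_{\beta+1},\dots,z_h$, and $\langle\breve\phi,\breve\phi\rangle=\langle z,z\rangle$ is preserved precisely because $\sum_a w_a^2=1$; this gives the first, $S^p$-type normal form. When $p=1$ the fiber may instead rotate the negative coordinate $z_0$ hyperbolically, and $\langle\breve\phi,\breve\phi\rangle$ is preserved because $\cosh^2u-\sinh^2u=1$; this gives the second, $\cosh u,\sinh u$ normal form. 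The bulk of the work is then to verify that each candidate $\breve\phi$ genuinely lands on $\{\langle\breve\phi,\breve\phi\rangle=-1\}$, is $\pi$-horizontal, and projects to an isometry onto the prescribed warped product $N_T\times_f N_\perp$, and that these two families exhaust the equality case; carrying the integration out carefully in the pseudo-Hermitian setting and cleanly separating the timelike from the spacelike fiber motion is the crux.
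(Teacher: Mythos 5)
This survey contains no proof of Theorem \ref{T:8.4}: the result is quoted from \cite{c03}, so your proposal can only be measured against the strategy of that original paper. On that score your plan is the right one and is essentially the one actually used: the inequality is the $c=-1$ specialization of Theorem \ref{T:8.1}; equality forces $N_T$ to be totally geodesic (hence an open part of $CH^h(-4)$), $N_\perp$ to be totally umbilical and $N$ minimal; and the normal forms are obtained by lifting $\phi$ through the indefinite Hopf fibration $\pi:H^{2m+1}_1(-1)\subset\mathbb C^{m+1}_1\to CH^m(-4)$, turning the equality data into a second-order system for the horizontal lift $\breve\phi$, and integrating. You have also correctly identified why $CH^m$ produces two families where $\mathbb C^m$ and $CP^m$ produce one: the timelike slot of $\left<\,,\,\right>$ admits a second way for the fiber motion to preserve the form.

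As a proof, however, the proposal has two concrete gaps. First, your derivation of (b) does not actually pin down the fiber geometry: the Gauss equation for a totally umbilical, totally real fiber in $CH^m(-4)$ gives $K=-1+\|H_{\mathrm{fib}}\|^2$, which has no a priori sign, so ``constant curvature plus normalization'' does not by itself yield $S^p(1)$ for $p\ge 2$, nor does it explain why the non-spherical alternative survives only as the $p=1$, $\mathbb R$ family. That dichotomy only falls out of the explicit integration. Second, and more importantly, the entire classification --- deriving the closed system for $\breve\phi$ from Proposition \ref{P:8.1} together with the equality-induced vanishing of the relevant components of $\sigma$, integrating it, verifying $\left<\breve\phi,\breve\phi\right>=-1$ and horizontality, and proving that the two displayed families \emph{exhaust} the equality case --- is only announced, not executed. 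Note also that the second normal form is not merely a hyperbolic rotation of $z_0$: the same parameter $u$ acts simultaneously by $(\cosh u,\sinh u)$ on the timelike coordinate and by $(\cos u,\sin u)$ on $z_1,\ldots,z_\alpha$, and showing that this mixed one-parameter motion is what the structure equations force (rather than some other isometric flow of the anti-de Sitter quadric) is precisely the part of the argument your sketch defers. The outline is faithful to the known proof, but it stops where the real work begins.
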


\section{{Warped product real hypersurfaces in complex space forms}}

For real hypersurfaces,  we have  the following non-existence theorem.

\begin{theorem}\label{T:9.1}  \cite{CM} There do not exist real hypersurfaces in complex projective and complex hyperbolic spaces which are Riemannian products of two or more Riemannian manifolds of positive dimension. 

In other words, every real hypersurface in a non-flat complex space form is irreducible.
\end{theorem}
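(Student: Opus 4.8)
My plan is to argue by contradiction through the Gauss equation, the point being that the holomorphic sectional curvature contributes a constant term that cannot be cancelled once $c\neq 0$ (and which is genuinely absent in the flat case). Suppose a real hypersurface $M^{2n-1}$ of a complex space form $\tilde M(4c)$ with $c\neq 0$ were locally a Riemannian product $M_1\times M_2$ with $\dim M_i=p_i\geq 1$. Then $TM$ splits as an orthogonal sum $T_1\oplus T_2$ of two parallel distributions, and the curvature operator obeys $R(X,Y)=0$ whenever $X\in T_1$ and $Y\in T_2$. First I would fix the contact data attached to the hypersurface: choosing a local unit normal $\xi$, I set $\xi_0=-J\xi$ and write $JX=\phi X+\eta(X)\xi$ with $\eta(X)=\langle X,\xi_0\rangle$; since $\tilde M$ is Kaehler and the normal bundle is one-dimensional, a short computation from the Weingarten formula gives $\nabla_X\xi_0=\phi AX$, where $A$ is the shape operator.

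Next I would feed the vanishing of the mixed curvature into the Gauss equation \eqref{2.4}. Using the explicit curvature tensor of $\tilde M(4c)$, one obtains for all $X\in T_1$, $Y\in T_2$ and all tangent $Z$ the pointwise vector identity
\begin{equation}
c\{\langle Y,Z\rangle X - \langle X,Z\rangle Y + \langle\phi Y,Z\rangle\phi X - \langle\phi X,Z\rangle\phi Y - 2\langle\phi X,Y\rangle\phi Z\} = \langle AX,Z\rangle AY - \langle AY,Z\rangle AX.
\end{equation}
Taking $Z=Y$ with $X,Y$ unit and pairing with $X$ recovers the scalar relation
\begin{equation}
\langle AX,X\rangle\langle AY,Y\rangle - \langle AX,Y\rangle^2 = -c\,(1 + 3\langle\phi X,Y\rangle^2),\qquad |X|=|Y|=1,
\end{equation}
which already isolates the obstruction: the summand $-c$ on the right comes from the real part of the holomorphic curvature, and it is exactly this term that the flat ambient space lacks.

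The core of the argument is to show that the product structure forces the shape operator into so degenerate a normal form that this term cannot persist. I would specialize the vector identity to $Z=\xi_0$; since $\phi\xi_0=0$ the $\phi$-terms drop out, and writing $V=A\xi_0$ and $\xi_0=U_1+U_2$ with $U_i\in T_i$ one gets
\begin{equation}
\langle Y,V\rangle AX - \langle X,V\rangle AY = c\,\langle Y,U_2\rangle X - c\,\langle X,U_1\rangle Y,\qquad X\in T_1,\ Y\in T_2.
\end{equation}
Projecting onto $T_1$ and $T_2$ and exploiting that the resulting operators must be symmetric forces, step by step: that $A\xi_0\neq 0$ (otherwise $\xi_0=0$); that the off-diagonal block of $A$ has rank at most one with image along $V$; that each diagonal block of $A$ is a scalar operator plus a rank-$\leq 1$ correction; and finally that $\xi_0$ must lie entirely inside one factor, so that the complementary factor sits in the holomorphic distribution $\mathcal H=\xi_0^{\perp}$, on which $\phi$ is an almost complex structure.

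The hard part is the last mile, namely deriving $c=0$ from this rigid normal form, since the scalar relation above is satisfied identically by the normal form alone. Here I would bring in the Codazzi equation for real hypersurfaces, $(\nabla_X A)Y-(\nabla_Y A)X = c\{\eta(X)\phi Y-\eta(Y)\phi X-2\langle\phi X,Y\rangle\xi_0\}$, and differentiate the scalar relation along the (totally geodesic) parallel leaves, using that $\phi$ restricted to the $\phi$-invariant factor satisfies $\phi^2=-1$. The point of friction I anticipate is precisely the bookkeeping of the off-diagonal coupling between $\phi$ and $A$ across the two factors: upgrading the pointwise rank-one corrections to a \emph{uniform} statement is what converts the collection of algebraic identities into the clean conclusion $c=0$, contradicting $c\neq 0$. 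That non-flatness is indispensable is confirmed by the Euclidean cylinders $S^1\times\mathbb E^{2n-2}\subset\mathbb C^n$, which are reducible real hypersurfaces of the flat complex space form and hence show that the constant term $-c$ is the sole source of the obstruction.
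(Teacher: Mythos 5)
This theorem is quoted in the survey from Chen--Maeda \cite{CM}; the paper itself contains no proof to compare against, so your proposal can only be measured against the known argument, whose general shape (Gauss equation with the curvature tensor of $\tilde M(4c)$, vanishing of mixed sectional curvatures, the induced almost contact structure $\phi$, $\xi_0=-J\xi$, and the identity $\nabla_X\xi_0=\phi AX$) you have correctly identified. The sanity checks are also right: $A\xi_0=0$ would force $\xi_0=0$, and the flat cylinders $S^1\times\mathbb E^{2n-2}\subset\mathbb C^n$ show that $c\neq 0$ is essential.

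However, there is a genuine gap, and you have named it yourself: the contradiction is never actually derived. Everything you extract from the Gauss equation is, as you concede, ``satisfied identically by the normal form alone,'' so at the end of the algebraic analysis no statement of the form $c=0$ has been reached; the Codazzi step that is supposed to close the argument is only announced, and the ``bookkeeping of the off-diagonal coupling between $\phi$ and $A$'' that you flag as the point of friction is exactly where the proof lives. Two intermediate assertions are also unjustified as written. First, the claim that $\xi_0$ must lie entirely in one factor is not a formal consequence of the rank-one structure of the off-diagonal block; excluding the case where $\xi_0$ has nonzero components in both factors requires its own argument (this is one of the case distinctions in Chen--Maeda). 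Second, you use only the curvature consequence $R(X,Y)=0$ of reducibility, whereas the decisive lever is the full parallelism of $T_1$ and $T_2$: since $\nabla_X\xi_0=\phi AX$ and each $T_i$ is $\nabla$-parallel, the operator $\phi A$ must respect the splitting in a very restrictive way (e.g.\ the lengths of the components of $\xi_0$ in each factor are controlled, and $\phi A$ maps each factor into prescribed pieces). It is the interaction of this differential constraint with the algebraic normal form --- not the Gauss data alone --- that produces the contradiction with $c\neq 0$. As it stands the proposal is a plausible plan with the hardest step outstanding, not a proof.
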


A contact manifold is an odd-dimensional manifold $M^{2n+1}$ with  a 1-form $\eta$ such that $\eta\wedge(d\eta)^n\not=0$.  A curve $\gamma=\gamma(t)$ in a contact manifold is called a {\it Legendre curve} if $\eta(\beta'(t))=0$ along $\beta$. Let $S^{2n+1}(c)$ denote the hypersphere in ${\mathbb C}^{n+1}$ with curvature  $c$ centered at the origin. Then $S^{2n+1}(c)$ is a contact manifold endowed with a canonical contact structure which is the dual 1-form of the characteristic vector field $J\xi$, where $J$ is the complex structure and $\xi$ the unit normal vector on $S^{2n+1}(c)$.

 Legendre curves are known to play an important role in the study of contact manifolds, e.g. a diffeomorphism of a contact manifold is a contact transformation if and only if it maps Legendre curves to Legendre curves.

Contrast to Theorem \ref{T:9.1}, there exist many warped product real hypersurfaces in complex space forms as given in the following three theorems from \cite{c02-3}.

\begin{theorem}\label{T:9.2}  Let $a$ be a positive number and $\gamma(t)=(\Gamma_1(t),\Gamma_2(t))$ be a unit speed Legendre curve $\gamma:I\to S^3(a^2)\subset{\mathbb C}^2$ defined on an open interval  $I$.  Then
\begin{align} &\hbox{\bf x}(z_1,\ldots,z_{n},t)=\big(a\Gamma_1(t)z_1, a\Gamma_2(t) z_1,z_2,\ldots,z_n\big),\quad z_1\ne 0\end{align} 
defines a real hypersurface which is the warped product ${\mathbb C}_{**}^{n}\times_{a|z_1|} I$ of a complex $n$-plane and $I$, where ${\mathbb C}_{**}^{n}=\{(z_1,\ldots,z_n):z_1\ne 0\}$.

Conversely, up to rigid motions of ${\mathbb C}^{n+1}$, every  real hypersurface in ${\mathbb C}^{n+1}$ which is the warped product $N\times_f I$ of a complex hypersurface $N$  and an open interval $I$ is either obtained in the way described above or given by the product submanifold ${\mathbb C}^n\times C\subset {\mathbb C}^{n}\times{\mathbb C}^1$ of ${\mathbb C}^{n}$ and a real curve $C$ in ${\mathbb C}^1$.
\end{theorem}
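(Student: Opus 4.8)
The plan is to prove the two assertions separately: the direct statement is a metric computation, while the converse is a genuine classification whose work lies in an integration step.

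For the direct part I would pull back the flat Hermitian metric of $\mathbb{C}^{n+1}$ under $\mathbf{x}$ and read off the warped-product structure. Writing the image coordinates as $w_1=a\Gamma_1(t)z_1$, $w_2=a\Gamma_2(t)z_1$, $w_{j}=z_{j-1}$ for $j\ge 3$, one has $dw_1=a\Gamma_1\,dz_1+a\Gamma_1' z_1\,dt$, $dw_2=a\Gamma_2\,dz_1+a\Gamma_2'z_1\,dt$, and $dw_j=dz_{j-1}$ otherwise. Summing $\sum_j|dw_j|^2$, the coefficient of $|dz_1|^2$ is $a^2(|\Gamma_1|^2+|\Gamma_2|^2)=1$ because $\gamma$ lies on $S^3(a^2)$; the coefficient of $dt^2$ is $a^2(|\Gamma_1'|^2+|\Gamma_2'|^2)|z_1|^2=a^2|z_1|^2$ by the unit-speed hypothesis; and the mixed $dz_1\,dt$ terms are a multiple of $\overline{\Gamma_1'\bar\Gamma_1+\Gamma_2'\bar\Gamma_2}$, which vanishes since the sphere condition kills its real part and the Legendre condition kills its imaginary part. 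Thus the induced metric is exactly $|dz_1|^2+\cdots+|dz_n|^2+(a|z_1|)^2dt^2$, the warped-product metric on $\mathbb{C}^n_{**}\times_{a|z_1|}I$, and positivity of the warping function makes $\mathbf{x}$ an immersion onto a real hypersurface.

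For the converse, let $M=N\times_f I\subset\mathbb{C}^{n+1}$ with $N$ a complex hypersurface, $\xi$ a unit normal, and $U=J\xi$ the characteristic field. The first structural fact I would establish is that $U$ is tangent to the $I$-factor: since $N$ is holomorphic, $TN$ is $J$-invariant, so for $X\in TN$ we get $JX\in TN\subset TM$, whence $\langle X,U\rangle=-\langle JX,\xi\rangle=0$ and $U\in TI$. Combining the warped-product identity $\nabla_X(\tfrac1f\partial_t)=0$ for $X\in TN$ with the real-hypersurface structure equation $\nabla_X U=\phi AX$ (where $\phi$ is the tangential part of $J$; signs follow the paper's conventions) gives $\phi AX=0$, so $AX\in\mathbb{R}U$ and hence $\langle AX,Y\rangle=0$ for all $X,Y\in TN$. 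This means each leaf $N\times\{t\}$ has vanishing second fundamental form in $\mathbb{C}^{n+1}$, so it is totally geodesic; being also $J$-invariant, it is an open piece of a complex affine hyperplane $H_t$. Pinning down the rest of the shape operator, equating $\nabla_U U=-\nabla(\ln f)$ with $\phi AU$ yields $AU=\alpha U+J\nabla(\ln f)$ and $AX=\langle X,J\nabla(\ln f)\rangle U$ for a single function $\alpha$, so the entire extrinsic geometry is controlled by $f$ and $\alpha$. This produces a dichotomy: if $\nabla f\equiv 0$ then $f$ is constant, $A$ has rank at most one with image $\mathbb{R}U\subset TI$, and $M$ is a cylinder over a curve, i.e.\ after a rigid motion the product $\mathbb{C}^n\times C$; otherwise $\nabla f\not\equiv 0$ and I expect the Legendre model.

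The substantial step is the reconstruction when $\nabla f\not\equiv 0$. Here I would feed the shape operator above into the Codazzi equation of $\mathbb{C}^{n+1}$ and use flatness of each leaf $H_t$ to integrate the resulting system. The first payoff should be that the complex normal direction of $H_t$ (equivalently the complex line $\mathbb{C}U$ carrying the fiber velocity $\mathbf{x}_t=fU$) sweeps out inside a fixed complex $2$-plane while the remaining coordinates stay constant along fibers; this isolates the common complex $(n-1)$-plane about which the hyperplanes rotate and reduces everything to a warped-product surface in $\mathbb{C}^2$. On that plane the fiber profile curve is then forced to have constant modulus (it lies on a sphere $S^3(a^2)$), constant speed $f=a|z_1|$, and to satisfy $\gamma'\perp_{\mathbb{C}}\gamma$, which is exactly the Legendre condition; integrating $\mathbf{x}_t=fU$ reassembles the stated formula $\mathbf{x}=(a\Gamma_1(t)z_1,a\Gamma_2(t)z_1,z_2,\dots,z_n)$. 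The main obstacle is precisely this integration: extracting from Codazzi that the motion is confined to a fixed $2$-plane and that the profile is a \emph{unit-speed Legendre} curve on $S^3(a^2)$, rather than merely some curve in $\mathbb{C}^{n+1}$, is the technical heart of the argument.
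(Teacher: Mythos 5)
First, a caveat on the comparison itself: this survey states Theorem \ref{T:9.2} and attributes it to \cite{c02-3} without reproducing a proof, so there is no in-paper argument to check you against line by line; what follows measures your proposal against what the cited classification actually requires.

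Your direct part is complete and correct: the identities $a^2(|\Gamma_1|^2+|\Gamma_2|^2)=1$ (radius $1/a$ for $S^3(a^2)$), $|\Gamma_1'|^2+|\Gamma_2'|^2=1$, and the splitting of $\Gamma_1'\bar\Gamma_1+\Gamma_2'\bar\Gamma_2=0$ into its real part (sphere condition) and imaginary part (Legendre condition) are exactly what kills the cross terms and produces the metric $|dz_1|^2+\cdots+|dz_n|^2+a^2|z_1|^2dt^2$. The first half of your converse is also sound: $U=J\xi$ is orthogonal to the $J$-invariant $TN$, hence tangent to the fiber; $\nabla_XU=0$ for $X\in TN$ forces $\phi AX=0$, hence $AX\in\mathbb{R}U$; together with total geodesy of the leaves in the warped product this makes each leaf totally geodesic and $J$-invariant in $\mathbb{C}^{n+1}$, i.e.\ an open piece of a complex affine hyperplane; and the case $\nabla f\equiv 0$ gives a mixed totally geodesic Riemannian product, which Moore's lemma (Section 3 of this paper) turns into $\mathbb{C}^n\times C$.

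The genuine gap is the step you yourself label the ``technical heart'': nothing in the proposal actually derives that the hyperplanes $H_t$ contain a fixed complex $(n-1)$-plane, that the warping function is $a|z_1|$ for a flat complex coordinate $z_1$, or that the profile curve is a \emph{unit-speed Legendre} curve on $S^3(a^2)$; these are asserted as ``forced'' and ``expected.'' Concretely, three computations are missing. (i) The warping function must be pinned down: comparing the intrinsic curvature $K(X\wedge U)=-H^f(X,X)/f$ of the warped product with the Gauss equation of the flat ambient space, using $AX=\langle X,J\nabla\ln f\rangle U$, yields the PDE $H^f(X,X)=f\,\langle X,J\nabla\ln f\rangle^2$ on the flat leaf, and one must solve it to get $f=a|z_1|$ (in particular the constant $a=|\nabla f|$ has to be shown constant, which requires Codazzi, not just the pointwise shape operator). (ii) One must show the distribution $\{X\in TN:\ Xf=(JX)f=0\}$ is a fixed complex $(n-1)$-plane common to all leaves, which is again a Codazzi/integrability argument, not automatic. (iii) The fiber ODE $\mathbf{x}_t=fU$ can only be integrated to $(a\Gamma_1(t)z_1,a\Gamma_2(t)z_1,z_2,\dots,z_n)$ after one has the Frenet-type system for $(U,\xi)$ along a fiber and verifies that the resulting plane curve data satisfies exactly the sphere, unit-speed and Legendre constraints. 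Without (i)--(iii) the converse is a plausible geometric picture rather than a proof; this is precisely the content of the argument in \cite{c02-3} that the proposal defers.
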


 Let $S^{2n+3}(1)$ denote the unit hypersphere in  ${\mathbb C}^{n+2}$ centered at the origin and put $$U(1)=\{\lambda\in {\mathbb C}:\lambda\bar\lambda=1\}.$$ Then there is a $U(1)$-action on $S^{2n+3}(1)$ defined by $z\mapsto \lambda z$. At $z\in S^{2n+3}(1)$ the vector $V=iz$ is tangent to the flow of the action. The quotient space $S^{2n+3}(1)/\sim$, under the identification induced from the action, is a complex projective space $ CP^{n+1}$ which endows with the canonical Fubini-Study metric of constant holomorphic sectional curvature $4$. 
 
 The almost complex structure $J$ on $ CP^{n+1}(4)$ is induced from the complex structure $J$ on ${\mathbb C}^{n+2}$ via the Hopf fibration: $\,\pi : S^{2n+3}(1)\to CP^{n+1}(4)$. It is well-known that the Hopf fibration $\pi$ is a Riemannian submersion such that $V=iz$ spans the vertical subspaces. 
Let $\phi:M\to CP^{n+1}(4)$ be an isometric immersion. Then $\hat M=\pi^{-1}(M)$ is a principal circle bundle over $M$ with totally geodesic fibers. The lift $\hat \phi:\hat M\to S^{2n+3}(1)$ of $\phi$ is an isometric immersion so that the diagram:
\[\label{E:5.b} \begin{CD} \hat M @>\hat \phi >>S^{2n+3}(1)\\ @V{\pi}VV @VV{\pi}V\\ M @> \phi>>  CP^{n+1}(4)\end{CD} \]
commutes.
 
Conversely, if $\psi:\hat M \to S^{2n+3}(1)$ is an isometric immersion which is invariant under the $U(1)$-action, then there is a unique isometric immersion $\psi_\pi:\pi(\hat M)\to CP^{n+1}(4)$ such that the associated diagram commutes. We simply call the immersion $\psi_\pi:\pi(\hat M)\to CP^{n+1}(4)$ {\it the projection} of $\psi:\hat M\to S^{2n+3}(1)$.

For a given  vector $X\in T_z(CP^{n+1})(4)$ and a point $u\in S^{2n+2}(1)$ with $\pi(u)=z$, we denote by $X_u^*$ the horizontal lift of $X$ at $u$ via $\pi.$ There exists  a canonical orthogonal decomposition:
\begin{equation}T_u S^{2n+3}(1)=(T_{\pi(u)}CP^{n+1}(4))_u^*\oplus
\hbox{\rm Span}\,\{V_u\}.\end{equation}
Since $\pi$ is a Riemannian  submersion, $X$ and  $X^*_u$ have the same length.

We put
\begin{equation}\begin{aligned}&S_*^{2n+1}(1)=\left\{(z_0,\ldots,z_n):\sum_{k=0}^n
z_k\bar z_k=1,\, z_0\ne 0\right\}, \\& CP^n_0=\pi(S_*^{2n+1}(1)).\end{aligned}\end{equation}

The next theorem classifies all warped products hypersurfaces of the form $N\times_{f}I$ in complex projective spaces.

\begin{theorem}\label{T:9.3}   \cite{c02-3} Suppose that $a$ is a
positive number and $\gamma(t)=(\Gamma_1(t),\Gamma_2(t))$ is a
unit speed Legendre curve $\gamma:I\to S^3(a^2)\subset{\mathbb C}^2$ defined on an open interval $I$.  Let $\hbox{\bf x}:S_*^{2n+1}\times I\to{\mathbb C}^{n+2}$ be the map defined by
\begin{align} &\hbox{\bf x}(z_0,\ldots,z_{n},t)=\big(a\Gamma_1(t)z_0,a\Gamma_2(t) z_0,z_1,\ldots,z_{n}\big),\;\;\;\sum_{k=0}^{n} z_k\bar z_k=1.\end{align} 
Then 
\begin{enumerate}
\item $\,\hbox{\bf x}$ induces an isometric immersion $\psi:S_*^{2n+1}(1)\times_{a|z_0|} I\to S^{2n+3}(1)$.

\item  The image $\psi(S_*^{2n+1}(1)\times_{a|z_0|} I)$ in $S^{2n+3}(1)$ is invariant under the action of $U(1)$.

\item the projection $$\psi_\pi:\pi(S_*^{2n+1}(1)\times_{a|z_0|} I) \to CP^{n+1}(4)$$ of $\psi$ via $\pi$ is a warped product hypersurface $CP^{n}_0\times_{a|z_0|} I$ in $CP^{n+1}(4)$.
\end{enumerate}

Conversely, if a  real  hypersurface in $CP^{n+1}(4)$ is a warped product $N\times_f I$ of a complex hypersurface $N$ of $CP^{n+1}(4)$  and an open interval $I$, then, up to rigid motions, it is locally obtained in the way described above.
\end{theorem}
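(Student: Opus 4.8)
The plan is to reduce the whole statement to the already-established Euclidean classification of Theorem \ref{T:9.2} by passing through the Hopf fibration $\pi:S^{2n+3}(1)\to CP^{n+1}(4)$ and then taking affine cones in $\mathbb{C}^{n+2}$. For the direct assertions (1)--(3), I would first check that $\mathbf{x}$ lands in $S^{2n+3}(1)$ by computing $|\mathbf{x}|^2=a^2|z_0|^2(|\Gamma_1|^2+|\Gamma_2|^2)+\sum_{k=1}^n|z_k|^2$ and using $|\Gamma_1|^2+|\Gamma_2|^2=a^{-2}$ (as $\gamma$ lies in $S^3(a^2)$, of radius $1/a$) together with $\sum_{k=0}^n|z_k|^2=1$. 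Next I would compute the induced metric: the $\partial/\partial t$ direction has squared length $a^2|z_0|^2|\gamma'|^2=a^2|z_0|^2$ by unit speed, the sphere directions reproduce the round metric of $S^{2n+1}_*$, and all mixed terms $\langle \mathbf{x}_t,\mathbf{x}_{z_k}\rangle$ vanish; the only nonobvious ones, those involving $z_0$, vanish because $\Gamma_1'\bar\Gamma_1+\Gamma_2'\bar\Gamma_2=0$, which is forced by combining the Legendre condition with $\tfrac{d}{dt}|\gamma|^2=0$. This yields exactly the warped product metric, proving (1). Statement (2) is immediate from $\mathbf{x}(\lambda z,t)=\lambda\,\mathbf{x}(z,t)$, and (3) follows because $|z_0|$ is $U(1)$-invariant, so the structure descends through the Riemannian submersion to $CP^n_0\times_{a|z_0|} I$.

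For the converse, the central device is a cone reduction. Given a warped product hypersurface $\phi:N\times_f I\to CP^{n+1}(4)$, I would lift it through the Hopf fibration to the $U(1)$-invariant hypersurface $\hat\phi:\hat M\to S^{2n+3}(1)$, where $\hat M=\pi^{-1}(M)$. Writing $\hat N=\pi^{-1}(N)$ for the circle bundle over $N$, the submersion metric shows that $\hat M=\hat N\times_f I$ is again a warped product: the vertical field $V=iz$ is orthogonal to the horizontal lift of $\partial/\partial t$ and lies tangent to $\hat N$, while the lifted $\partial/\partial t$ keeps length $f$, so the induced metric is $g_{\hat N}+f^2\,dt^2$ with $f$ pulled back and constant along fibers.

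I would then pass to the affine cone $C(\hat M)=\mathbb{R}_+\cdot\hat M\subset\mathbb{C}^{n+2}$. The key point is that coning turns this into a \emph{Euclidean} warped product hypersurface: since the cone metric is $dr^2+r^2(g_{\hat N}+f^2\,dt^2)=g_{C(\hat N)}+(rf)^2\,dt^2$, we obtain $C(\hat M)=N'\times_{rf} I$, where $N'=C(\hat N)$ is precisely the affine cone over $N$, hence a complex hypersurface of $\mathbb{C}^{n+2}$ of complex dimension $n+1$ (the complex preimage of a projective complex hypersurface).

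Finally I would apply Theorem \ref{T:9.2} (with $n$ replaced by $n+1$) to $C(\hat M)=N'\times_{rf} I$. Because the warping function $rf$ depends genuinely on the cone coordinate $r$, it is nonconstant, so the trivial product alternative $\mathbb{C}^{n+1}\times C$ (which carries constant warping) cannot occur; hence $C(\hat M)$ is congruent to the Legendre-type immersion $\mathbf{X}(w_1,\dots,w_{n+1},t)=(a\Gamma_1 w_1,a\Gamma_2 w_1,w_2,\dots,w_{n+1})$ for some unit-speed Legendre curve $\gamma$ in an $S^3(a^2)$. This expression is automatically invariant under the diagonal $U(1)$-action, so I can restrict to $\sum|w_j|^2=1$ (the slice $r=1$, which recovers $\hat M\subset S^{2n+3}(1)$) and project by $\pi$; relabeling $w_1=z_0,\,w_2=z_1,\dots,w_{n+1}=z_n$ returns exactly the asserted formula for $\mathbf{x}$. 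I expect the main obstacle to be the careful verification that the lifted immersion really is the warped product $\hat N\times_f I$ and that its cone is honestly the complex-hypersurface warped product $N'\times_{rf} I$ to which Theorem \ref{T:9.2} applies, together with checking that the descent through $\pi$ is compatible, so that the Legendre data obtained upstairs is precisely the Legendre data appearing in the statement.
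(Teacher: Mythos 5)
First, a point of reference: this survey states Theorem \ref{T:9.3} without proof, citing \cite{c02-3}, so the comparison is with the argument of that reference, which likewise passes through the Hopf fibration and an explicit integration in ${\mathbb C}^{n+2}$. Your treatment of the direct assertions (1)--(3) is correct and essentially complete: $|\Gamma_1|^2+|\Gamma_2|^2=a^{-2}$ puts the image in $S^{2n+3}(1)$, unit speed gives $|\hbox{\bf x}_t|^2=a^2|z_0|^2$, and the vanishing of the mixed terms is exactly the complex equation $\Gamma_1'\bar\Gamma_1+\Gamma_2'\bar\Gamma_2=0$, whose real and imaginary parts are $\tfrac{d}{dt}|\gamma|^2=0$ and the Legendre condition; homogeneity in $z$ gives (2), and (3) descends because $|z_0|$ is $U(1)$-invariant and the vertical field $iz$ is tangent to the first factor.

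The converse is where the substantive gap sits, and you have located it yourself: the claim that $\hat M=\pi^{-1}(N\times_f I)$ is again a warped product $\hat N\times_f I$ is not a formal consequence of ``the submersion metric''---an orthogonal splitting of $T\hat M$ into $\mathcal V\oplus\mathcal H_N$ and the lifted $t$-direction does not by itself yield a warped product. To close it you must invoke Hiepko's decomposition criterion: show that $D_1=\mathcal V\oplus\mathcal H_N$ is integrable with totally geodesic leaves $\pi^{-1}(N\times\{t\})$ and that the lifted $t$-direction is spherical. The first of these is where the hypothesis that $N$ is a \emph{complex} hypersurface enters irreplaceably: one needs $\langle\tilde\nabla_{V}X^*,T\rangle=\langle (JX)^*,T\rangle=\langle JX,\partial_t\rangle=0$, which holds only because $J(TN)=TN$, together with $\langle\tilde\nabla_{X^*}Y^*,T\rangle=\langle\nabla^M_XY,\partial_t\rangle=0$ from the total geodesy of the $N$-leaves downstairs; sphericity of the $t$-curves lifts from $N\times_f I$ because their mean curvature vector $-(\nabla^N\ln f)^*$ is horizontal and basic. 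Once this is done, the cone identity $dr^2+r^2(g_{\hat N}+f^2dt^2)=g_{C(\hat N)}+(rf)^2dt^2$ and the appeal to Theorem \ref{T:9.2} with $n+1$ in place of $n$ are sound. Two smaller repairs: the alternative ${\mathbb C}^{n+1}\times C$ in Theorem \ref{T:9.2} is better excluded not by ``constant warping'' (that dichotomy concerns the submanifold, not a particular warping function carried by it) but by noting that $C(\hat M)$ is invariant under the full ${\mathbb C}^*$-action while no product ${\mathbb C}^{n+1}\times C$ with $C$ a real curve can be; and the rigid motion supplied by Theorem \ref{T:9.2} must be shown to fix the origin and to be complex linear (both $C(\hat M)$ and the model are complex cones with vertex at the origin) before it descends to a rigid motion of $CP^{n+1}(4)$.
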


In the complex pseudo-Euclidean space ${\mathbb C}^{n+2}_1$ endowed with  pseudo-Euclidean metric
\begin{equation}g_0=-dz_0d\bar z_0 +\sum_{j=1}^{n+1}dz_jd\bar z_j,\end{equation} we  define the anti-de Sitter space-time  by
\begin{equation}H^{2n+3}_1(-1)=\big\{(z_0,z_1,\ldots,z_{n+1}):
\left<z,z\right>=-1\big\}.\end{equation} It is known that $H^{2n+3}_1(-1)$ has constant sectional curvature $-1$. There is a $U(1)$-action on
$H_1^{2n+3}(-1)$ defined by $z\mapsto \lambda z$. At a point $z\in H^{2n+3}_1(-1)$,  $iz$ is tangent to the flow of the action. The orbit is given by  $z_t=e^{it}z$ with ${{dz_t}\over{dt}}=iz_t$ which lies in the negative-definite plane spanned by $z$ and $iz$. 

The quotient space $H^{2n+3}_1(-1)/\sim$ is the complex hyperbolic space $CH^{n+1}(-4)$ which endows a canonical Kaehler metric of constant holomorphic sectional curvature $-4$. The  complex structure $J$ on $ CH^{n+1}(-4)$ is induced from the canonical complex structure $J$ on $ {\mathbb C}^{n+2}_1$ via the totally geodesic fibration: $\pi:H^{2n+3}_1\rightarrow  CH^{n+1}(-4).$

Let $\phi:M\to CH^{n+1}(-4)$ be an isometric immersion. Then $\hat M=\pi^{-1}(M)$ is a principal circle bundle over $M$ with totally geodesic fibers. The lift $\hat \phi:\hat M\to H_1^{2n+3}(-1)$ of $\phi$ is an isometric immersion such that the
 diagram:
\[\begin{CD} \hat M @>\hat \phi >>H_1^{2n+3}(-1)\\ @V{\pi}VV @VV{\pi}V\\ M @> \phi>>  CH^{n+1}(-4)\end{CD}\]
commutes.

Conversely, if $\psi:\hat M \to H_1^{2n+3}(-1)$ is an isometric immersion which is invariant under the $U(1)$-action, there
is a unique isometric immersion $\psi_\pi:\pi(\hat M)\to CH^{n+1}(-4)$,  called the {\it projection of\/} $\psi$ so that the associated diagram commutes.

We put
\begin{align} & H_{1*}^{2n+1}(-1)=\big\{(z_0,\ldots,z_{n})\in H^{2n+1}_1(-1): z_n\ne 0\big\},\\& CH^n_*(-4)=\pi(H^{2n+1}_{1*}(-1)).\end{align}

The following theorem  classifies all warped products hypersurfaces of the form $N\times_{f}I$ in complex hyperbolic spaces.

 \begin{theorem}\label{T:9.4} \cite{c02-3} Suppose that $a$ is a positive number and $\gamma(t)=(\Gamma_1(t),\Gamma_2(t))$ is a unit speed Legendre curve $\gamma:I\to S^3(a^2)\subset{\mathbb C}^2$.  Let $$\hbox{\bf y}:H_{1*}^{2n+1}(-1)\times I\to{\mathbb C}^{n+2}_1$$ be the map defined by
\begin{align} &\hbox{\bf y}(z_0,\ldots,z_{n},t)=(z_0,\ldots,z_{n-1},
a\Gamma_1(t)z_{n},a\Gamma_2(t) z_{n}),\\&\hskip 1in z_0\bar z_0-\sum_{k=1}^{n} z_k\bar z_k=1.\end{align}
Then we have
\begin{enumerate}
\item  $\,\hbox{\bf y}$  induces an isometric immersion $\psi:H_{1*}^{2n+1}(-1)\times_{a|z_n|} I\to H_1^{2n+3}(-1)$.

\item  The image $\psi(H_{1*}^{2n+1}(-1)\times_{a|z_n|} I)$ in $H_1^{2n+3}(-1)$ is invariant under the $U(1)$-action.

\item the projection $$\psi_\pi:\pi(H_{1*}^{2n+1}(-1)\times_{a|z_n|} I) \to CH^{n+1}(-4)$$ of $\psi$ via $\pi$ is a warped product hypersurface $CH^{n}_*(-1)\times_{a|z_n|} I$ in $CH^{n+1}(-4)$.
\end{enumerate}

Conversely, if a  real hypersurface in $CH^{n+1}(-4)$ is a warped product $N\times_f I$ of a complex hypersurface $N$ and an open interval $I$, then, up to rigid motions, it is locally obtained in the way described above.
\end{theorem}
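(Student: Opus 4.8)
The plan is to treat the two directions separately, proving the explicit construction first and then the converse classification, following the pattern of Theorem~\ref{T:9.3} but adapting every computation to the indefinite metric $g_0=-dz_0d\bar z_0+\sum_{j=1}^{n+1}dz_jd\bar z_j$ on ${\mathbb C}^{n+2}_1$. The guiding principle is that the warped (moving) coordinate must be \emph{spacelike}, which forces the directrix to live in a round $3$-sphere even though the ambient space is hyperbolic.

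For the direct statements (1)--(3), I would first check that $\mathbf{y}$ lands in $H_1^{2n+3}(-1)$: a direct computation gives
\[
\langle \mathbf{y},\mathbf{y}\rangle=-|z_0|^2+\sum_{k=1}^{n-1}|z_k|^2+a^2\big(|\Gamma_1|^2+|\Gamma_2|^2\big)|z_n|^2,
\]
and since $\gamma$ lies in $S^3(a^2)$ we have $|\Gamma_1|^2+|\Gamma_2|^2=a^{-2}$, so this collapses to $-(|z_0|^2-\sum_{k=1}^n|z_k|^2)=-1$. Next I would compute the pulled-back metric. The $z$-directions receive the restriction of $g_0$ on ${\mathbb C}^{n+1}_1$, hence exactly $g_{H_{1*}^{2n+1}}$, while $\partial\mathbf{y}/\partial t=(0,\ldots,0,a\Gamma_1'z_n,a\Gamma_2'z_n)$ has squared length $a^2(|\Gamma_1'|^2+|\Gamma_2'|^2)|z_n|^2=a^2|z_n|^2$ by the unit-speed hypothesis, i.e.\ the warping $f^2=(a|z_n|)^2$. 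The only cross term is the $t$--$z_n$ term, proportional to $\Gamma_1'\bar\Gamma_1+\Gamma_2'\bar\Gamma_2$; differentiating $|\Gamma_1|^2+|\Gamma_2|^2=a^{-2}$ kills its real part, while the Legendre condition $\langle\gamma',J\gamma\rangle=0$ kills its imaginary part, so the cross term vanishes and the induced metric is precisely the warped product metric. Invariance under $U(1)$ is immediate from $\lambda\,\mathbf{y}(z,t)=\mathbf{y}(\lambda z,t)$, and since $|z_n|$ is $U(1)$-invariant the warping function descends; the Riemannian submersion $\pi$ then realizes $\psi_\pi$ as the warped product hypersurface $CH^n_*(-1)\times_{a|z_n|}I$.

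For the converse, the decisive structural step is to locate the characteristic (Reeb) field. Let $\xi$ be the unit normal of $M=N\times_fI$ in $CH^{n+1}(-4)$ and write $T=\partial/\partial t$. Because $N$ is a complex hypersurface, $J$ preserves $TN$; hence for any $X\in TN$ we have $\langle J\xi,X\rangle=-\langle\xi,JX\rangle=0$, so $J\xi\perp TN$. As $J\xi$ is tangent to $M$ and $TM=TN\oplus{\mathbb R}T$, this forces $J\xi$ to be a multiple of $T$, and comparing lengths gives $J\xi=\pm f^{-1}T$. Thus \emph{the integral curves of the Reeb field are exactly the $I$-fibres}, and $N$ is a holomorphic slice orthogonal to the Reeb flow. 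I would then lift to $\hat M=\pi^{-1}(M)\subset H_1^{2n+3}(-1)$, choose a local frame adapted to the splitting $TN\oplus{\mathbb R}T$ together with the vertical field $V=iz$, and write down the Gauss equation~\eqref{2.4} together with the Codazzi equation for $\hat\phi$. The alignment $J\xi=\pm f^{-1}T$ rigidifies the shape operator, and integrating the resulting first-order system along the $T$-direction produces a single ${\mathbb C}^2$-valued curve $\gamma=(\Gamma_1,\Gamma_2)$ multiplying the coordinate $z_n$, with the remaining coordinates unmoved; this yields the stated normal form $\mathbf{y}$ up to a rigid motion.

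The hard part will be the last step of the converse: showing that the curve produced by integrating along $T$ is automatically a \emph{unit-speed Legendre} curve in a round $3$-sphere $S^3(a^2)$. The unit-speed and sphere conditions come from the warped-product normalization $\langle T,T\rangle=f^2$ and from the constancy of $|\Gamma_1|^2+|\Gamma_2|^2$ forced by the structure equations, but the Legendre condition $\langle\gamma',J\gamma\rangle=0$ must be extracted from the precise vanishing $\Gamma_1'\bar\Gamma_1+\Gamma_2'\bar\Gamma_2=0$ that Codazzi imposes on the $z_n$-column of the immersion. Handling the indefinite signature carefully---so that the warped ($z_n$) direction stays spacelike while $z_0$ remains timelike, which is exactly why $\gamma$ still lives in a Euclidean sphere rather than in a hyperbolic space---is the principal technical obstacle, and it is the point at which the hyperbolic case genuinely diverges in bookkeeping (though not in spirit) from the projective Theorem~\ref{T:9.3}.
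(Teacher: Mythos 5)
Note first that this survey does not actually prove Theorem \ref{T:9.4}: it states the result and cites \cite{c02-3}, so there is no in-paper argument to compare against, and your proposal has to be judged on its own terms. Your treatment of the direct statements (1)--(3) is correct and essentially complete: the verification $\langle \hbox{\bf y},\hbox{\bf y}\rangle=-1$ using $|\Gamma_1|^2+|\Gamma_2|^2=a^{-2}$, the identification of the warping function $a|z_n|$ from the unit-speed hypothesis, the vanishing of the $t$--$z_n$ cross term via $\Gamma_1'\bar\Gamma_1+\Gamma_2'\bar\Gamma_2=0$ (real part from constancy of the norm, imaginary part from the Legendre condition), and the descent through the $U(1)$-action are exactly the computations required, carried out correctly for the indefinite metric.

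The genuine gap is in the converse. Identifying $J\xi=\pm f^{-1}T$ is the right first move, but everything after it is asserted rather than proved. Concretely: (i) you never establish that the shape operator of $M$ vanishes on $TN\times TN$ --- this follows from applying $\tilde\nabla J=0$ to $J\xi=\pm f^{-1}T$ together with the total geodesy of the leaves $N\times\{t\}$ in the warped product, and it is the step that forces $N$ to be an open part of a totally geodesic $CH^{n}$, hence its lift to be a linear $H^{2n+1}_1(-1)$; (ii) the claim that ``integrating the resulting first-order system along the $T$-direction produces a single $\mathbb{C}^2$-valued curve multiplying the coordinate $z_n$'' is precisely the content of the classification --- one must actually derive the ODE satisfied by the position vector of the lift along the $T$-flow and show that all complex coordinates but one are fixed --- and you do not do this; (iii) you explicitly defer the verification that the resulting curve is a unit-speed Legendre curve in $S^3(a^2)$, which you yourself label the hard part. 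A further omission: in the flat analogue, Theorem \ref{T:9.2}, the converse has a second, degenerate branch (the product $\mathbb{C}^n\times C$), so your integration scheme must explain why no analogous branch survives in $CH^{n+1}(-4)$ (e.g.\ by invoking Theorem \ref{T:9.1}) rather than silently producing a single normal form. As it stands, the converse half is a plausible roadmap consistent with the strategy of \cite{c02-3}, not a proof.
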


\section{{Twisted product $CR$-submanifolds of Kaehler manifolds}}

Twisted products $B\times_{\lambda} F$ are natural extensions of warped products, namely the function may depend on both factors (cf. \cite[page 66]{cbook2}). 
When $\lambda$ depends only on $B$, the twisted product becomes a  warped product.  If $B$ is a point, the twisted product is nothing but a conformal change of metric on $F$.

The study of twisted product $CR$-submanifolds was initiated by the author in 2000 (see \cite{c5.1}). In particular, the following results are obtained.

\begin{theorem}\label{T:10.1} \cite{c5.1} If  $\,M=N_\perp\times_\lambda N_T$ is a twisted product $CR$-submanifold of a Kaehler manifold $\tilde M$ such that $N_\perp$ is a totally real submanifold and $N_T$ is a holomorphic submanifold of $\tilde M$, then $M$ is a $CR$-product. \end{theorem}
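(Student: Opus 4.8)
The plan is to show that the twisting function $\lambda$ cannot depend on the base factor $N_\perp$, after which the metric splits off as a Riemannian product and $M$ becomes a $CR$-product. Write $\mathcal{D}^\perp = TN_\perp$ for the totally real (base) distribution and $\mathcal{D}=TN_T$ for the holomorphic (fibre) distribution, reserving $X,Y$ for sections of $\mathcal{D}^\perp$ and $Z,W$ for sections of $\mathcal{D}$; recall $J\mathcal{D}=\mathcal{D}$ while $J\mathcal{D}^\perp\subset T^\perp M$. The first step is to record the Levi--Civita connection of the twisted metric $g=g_{N_\perp}+\lambda^2 g_{N_T}$ via the Koszul formula: for horizontal $X$ and vertical $U,V$ one obtains the mixed relations $\nabla_X U=(X\ln\lambda)U$ (hence also $\nabla_Z X=(X\ln\lambda)Z$ by torsion-freeness and $[X,U]=0$) and $\langle\nabla_U V,X\rangle=-(X\ln\lambda)\langle U,V\rangle$, together with the fact, automatic for any twisted product, that the base leaves $N_\perp$ are totally geodesic. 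Thus proving $X\ln\lambda=0$ for every $X\in\mathcal{D}^\perp$ will simultaneously render the fibre leaves totally geodesic.

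Next I would feed these formulas into the Kaehler condition $\tilde\nabla J=0$ through the Gauss and Weingarten formulas, extracting two consequences. First, pairing $\tilde\nabla_X(JZ)=J\tilde\nabla_X Z$ against a totally real direction $Y$ and using $\langle JZ,Y\rangle=0$ gives $\langle\sigma(X,Z),JY\rangle=0$; hence $\sigma(X,Z)$ lies in the maximal holomorphic (that is, $J$-invariant) subbundle $\nu$ of the normal bundle, so that $J\sigma(X,Z)$ is again normal. Second, I expand $\tilde\nabla_Z(JX)=J\tilde\nabla_Z X$ using Weingarten on the left (as $JX$ is normal) and $\nabla_Z X=(X\ln\lambda)Z$ on the right; comparing tangential parts, and noting that $J\sigma(X,Z)\in\nu$ contributes nothing tangentially, yields the clean identity
\[
A_{JX}Z=-(X\ln\lambda)\,JZ,\qquad Z\in\mathcal{D}.
\]
In particular $A_{JX}$ preserves $\mathcal{D}$ and restricts there to the operator $-(X\ln\lambda)\,J|_{\mathcal{D}}$.

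The crux is then a symmetry clash. The shape operator $A_{JX}$ is $g$-self-adjoint, whereas $J|_{\mathcal{D}}$ is $g$-skew-adjoint because $g$ is Hermitian; an operator that is at once a scalar multiple of a skew-adjoint operator and self-adjoint must vanish. Concretely, evaluating $\langle A_{JX}Z,W\rangle=\langle A_{JX}W,Z\rangle$ with the displayed identity and taking $W=JZ$ forces $(X\ln\lambda)\,|Z|^2=0$, whence $X\ln\lambda=0$ for all $X\in\mathcal{D}^\perp$. This is the step where holomorphicity of $N_T$ is indispensable: it is precisely $J\mathcal{D}=\mathcal{D}$ that makes $J|_{\mathcal{D}}$ a nonzero skew-adjoint endomorphism of the fibre, and I expect this self-adjoint-versus-skew-adjoint tension to be the main, indeed essentially the only, obstacle in the argument. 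Once $X\ln\lambda=0$, the relation $\langle\nabla_U V,X\rangle=-(X\ln\lambda)\langle U,V\rangle=0$ shows that $\mathcal{D}$ is totally geodesic as well; with both complementary distributions integrable and totally geodesic, the local de Rham decomposition presents $M$ as a Riemannian product of $N_\perp$ with $N_T$ (carrying the fibrewise-conformal metric $\lambda^2 g_{N_T}$, which is a purely fibre quantity). A Riemannian product of a totally real and a holomorphic submanifold is by definition a $CR$-product, completing the proof.
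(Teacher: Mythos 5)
Your argument is correct, and since this survey states Theorem \ref{T:10.1} without proof (citing \cite{c5.1}), the relevant comparison is with the original source, whose proof is essentially yours: the twisted-product connection formulas fed into $\tilde\nabla J=0$ yield $\<\sigma(Z,W),JX\>=\pm\,(X\ln\lambda)\<JZ,W\>$ for $Z,W\in\mathcal D$, $X\in\mathcal D^\perp$, and the symmetry of $\sigma$ against the skew-adjointness of $J|_{\mathcal D}$ forces $X\ln\lambda=0$, after which the metric visibly splits. All intermediate steps (total geodesy of the base leaves, $\nabla_XU=(X\ln\lambda)U$, $\sigma(\mathcal D,\mathcal D^\perp)\perp J\mathcal D^\perp$) check out, so nothing further is needed.
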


\begin{theorem}\label{T:10.2} \cite{c5.1} Let $M=N_T\times_\lambda N_\perp$ be a  twisted product $CR$-submanifold of a Kaehler manifold $\tilde M$ such that $N_\perp$ is a totally real submanifold
and $N_T$ is a holomorphic submanifold of $\tilde M$. Then we have

\begin{enumerate}
\item The squared norm of the second fundamental form of $M$ in $\tilde M$ satisfies
\begin{equation}\notag ||\sigma||^2\geq 2\, p\, ||\nabla^T(\ln\lambda)||^2,\end{equation}
where $\nabla^T(\ln\lambda)$ is the $N^T$-component of the gradient $\nabla(\ln \lambda)$ of $\,\ln\lambda$ and $p$ is the dimension of $N_\perp$.

\item If $\,||\sigma||^2= 2p\,||\nabla^T\ln\lambda ||^2\,$ holds identically, then $N_T$ is a
totally geodesic submanifold and $N_\perp$ is a totally umbilical  submanifold of $\tilde M$.

\item If $M$ is anti-holomorphic in $\tilde M$ and $\dim N_\perp>1$, then  $\,||\sigma||^2=
2p\,{||\nabla^T\ln\lambda ||^2}\,$ holds identically if and only if $N_T$ is a
totally geodesic  submanifold and $N_\perp$ is a totally umbilical submanifold of $\tilde M$.
\end{enumerate}\end{theorem}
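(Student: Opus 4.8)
The plan is to reduce everything to a single fundamental identity for the mixed components of the second fundamental form, exactly as in the warped case (Theorem \ref{T:6.2}), and then to run the standard frame estimate, being careful about the two features that distinguish the twisted setting: the factor $\lambda$ now depends on both factors, and only its $N_T$-component $\nabla^T(\ln\lambda)$ should survive in the end. First I would record the Levi-Civita connection of the twisted metric $g=g_{N_T}+\lambda^2 g_{N_\perp}$. A direct Christoffel computation gives, for $X,Y$ tangent to the holomorphic distribution $\mathcal{D}=TN_T$ and $Z,W$ tangent to the totally real distribution $\mathcal{D}^\perp=TN_\perp$, that $\nabla_X Y$ is tangent to $N_T$, that $\nabla_X Z=\nabla_Z X=(X\ln\lambda)Z$, and that the $\mathcal{D}$-component of $\nabla_Z W$ equals $-\langle Z,W\rangle\,\nabla^T(\ln\lambda)$. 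Thus the $N_T$-slices are totally geodesic in $M$ while the $N_\perp$-fibers are only totally umbilical in $M$; this is the single place where the distinction between twisted and warped enters, and it is exactly what forces $\nabla^T(\ln\lambda)$ rather than the full gradient into the estimate. I expect this bookkeeping, keeping horizontal and vertical parts of $\nabla_Z W$ cleanly separated, to be the main technical obstacle.

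Next I would derive the key identity
\begin{equation}\notag \langle\sigma(JX,Z),JW\rangle=(X\ln\lambda)\,\langle Z,W\rangle,\qquad X\in\mathcal{D},\ Z,W\in\mathcal{D}^\perp.\end{equation}
Starting from $\langle\sigma(X,Z),JW\rangle=\langle\tilde\nabla_Z X,JW\rangle$ (valid since $\nabla_Z X$ is tangent to $M$ and $JW$ is normal, the latter because $N_T$ is holomorphic and $N_\perp$ totally real), I would push $J$ across using $\tilde\nabla J=0$ to reach $\langle JX,\tilde\nabla_Z W\rangle$, then substitute the horizontal part of $\nabla_Z W$ from the connection formulas; replacing $X$ by $JX$ gives the stated form. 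With this in hand, pick an adapted orthonormal frame $e_1,\dots,e_h,Je_1,\dots,Je_h$ of $\mathcal{D}$ (here $h=\dim_{\mathbb C}N_T$) and $\hat e_1,\dots,\hat e_p$ of $\mathcal{D}^\perp$, and split
\begin{equation}\notag \|\sigma\|^2=\sum_{\mathcal{D},\mathcal{D}}\|\sigma\|^2+2\sum_{\mathcal{D},\mathcal{D}^\perp}\|\sigma\|^2+\sum_{\mathcal{D}^\perp,\mathcal{D}^\perp}\|\sigma\|^2.\end{equation}
Bounding the middle block below by its projection onto $J\mathcal{D}^\perp$, the identity evaluates those projections to $(e_a\ln\lambda)^2$, and summing over the frame yields $2\sum_{\mathcal{D},\mathcal{D}^\perp}\|\sigma\|^2\ge 2p\,\|\nabla^T(\ln\lambda)\|^2$; discarding the two nonnegative outer blocks proves statement (1).

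For statement (2), equality forces each discarded quantity to vanish. Then $\sigma(\mathcal{D},\mathcal{D})=0$ says the $N_T$-slice is totally geodesic in $\tilde M$, since its second fundamental form there coincides with $\sigma|_{\mathcal{D}\times\mathcal{D}}$. Meanwhile $\sigma(\mathcal{D}^\perp,\mathcal{D}^\perp)=0$ together with the umbilicity of the fibers in $M$ gives, via $\sigma^{N_\perp}(Z,W)=-\langle Z,W\rangle\,\nabla^T(\ln\lambda)+\sigma(Z,W)$, that $N_\perp$ is totally umbilical in $\tilde M$ with mean curvature $-\nabla^T(\ln\lambda)$.

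For statement (3) the forward implication is (2), so only the converse needs work. Here I would use that, $N_\perp$ being totally real, the quantity $\langle\sigma(Z,W),JV\rangle$ is totally symmetric in $Z,W,V$ (the same push-$J$-across manipulation gives $\langle\sigma(Z,W),JV\rangle=\langle\sigma(Z,V),JW\rangle$). Anti-holomorphy forces the umbilical section $\sigma(Z,W)=\langle Z,W\rangle\,\mu$ to have $\mu=\sum_t\mu^t J\hat e_t\in J\mathcal{D}^\perp$, and substituting into the symmetry relation with the orthonormal $\mathcal{D}^\perp$-frame yields $\delta_{ij}\mu^k=\delta_{ik}\mu^j$; choosing $i=j\neq k$, possible precisely because $p>1$, gives $\mu=0$, hence $\sigma(\mathcal{D}^\perp,\mathcal{D}^\perp)=0$. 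Combined with $\sigma(\mathcal{D},\mathcal{D})=0$ and anti-holomorphy (which makes the middle block exactly equal to its $J\mathcal{D}^\perp$-projection), this restores equality in (1). The indispensable use of $p>1$ in this last step is exactly what the hypothesis supplies.
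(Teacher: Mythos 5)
The survey states Theorem \ref{T:10.2} without proof, simply citing \cite{c5.1}; your argument is correct and follows essentially the same route as the original proof there: the twisted-product connection formulas, the key identity $\left<\sigma(JX,Z),JW\right>=(X\ln\lambda)\left<Z,W\right>$ obtained by moving $J$ across with $\tilde\nabla J=0$, the adapted-frame estimate that keeps only the $J\mathcal D^\perp$-projection of the mixed block, and, for the converse in part (3), the total symmetry of $\left<\sigma(Z,W),JV\right>$ on $\mathcal D^\perp$ combined with umbilicity and $p>1$ to kill $\sigma(\mathcal D^\perp,\mathcal D^\perp)$. Nothing needs to be added.
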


 For mixed foliate twisted product $CR$-submanifolds of Kaehler manifolds, we have the following result.

\begin{theorem}\label{10.3}  \cite{c5.1} Let $M=N_T\times_\lambda N_\perp$ be a twisted product $CR$-submanifold of a Kaehler manifold $\tilde M$ such that $N_\perp$ is a totally real submanifold and $N_T$ is a holomorphic submanifold of $\tilde M$. If $M$ is mixed totally geodesic, then we have

\begin{enumerate}
\item The twisted function $\lambda$ is a function depending only on $N_\perp$. 

\item  $N_T\times N^\lambda_\perp$ is a $CR$-product, where $N^\lambda_\perp$ denotes the
manifold $N_\perp$ equipped with the metric $g_{N_\perp}^\lambda=\lambda^2 g_{N_\perp}$.
\end{enumerate}\end{theorem}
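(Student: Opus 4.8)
The plan is to reduce the whole statement to the single assertion that the twisting function $\lambda$ has no derivative in the holomorphic directions, i.e. that $X\ln\lambda=0$ for every $X$ tangent to $N_T$; once this is in hand, both conclusions follow almost immediately.

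First I would fix the roles of the factors, regarding $N_T$ as the base and $N_\perp$ as the fibre, so that $\mathcal D=TN_T$ is the holomorphic distribution (with $J\mathcal D=\mathcal D$, since $N_T$ is holomorphic) and $\mathcal D^\perp=TN_\perp$ is the totally real distribution (so that $J\mathcal D^\perp$ is normal to $M$). The only structural ingredient I need from the theory of twisted products is the mixed connection formula: for $X\in\mathcal D$ and $Z\in\mathcal D^\perp$ one has $\nabla_Z X=\nabla_X Z=(X\ln\lambda)Z$. I would derive this directly from the Koszul formula applied to $g=g_{N_T}+\lambda^2 g_{N_\perp}$; the mixed term is insensitive to the $N_\perp$-dependence of $\lambda$, so it has exactly the same form as in the warped product case, the genuinely new twisted terms appearing only in $\nabla_Z W$ for $Z,W\in\mathcal D^\perp$, which I will not use.

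The heart of the argument is to compute $\tilde\nabla_Z(JX)$ in two ways for $X\in\mathcal D$ and $Z\in\mathcal D^\perp$. On one hand, since $JX\in\mathcal D$, the Gauss formula together with mixed total geodesy ($\sigma(Z,JX)=0$) gives $\tilde\nabla_Z(JX)=\nabla_Z(JX)=((JX)\ln\lambda)Z$, a vector tangent to $M$ (indeed lying in $\mathcal D^\perp$). On the other hand, using that $\tilde M$ is Kaehler ($\tilde\nabla J=0$), the Gauss formula, the mixed connection formula, and mixed total geodesy ($\sigma(Z,X)=0$) give $\tilde\nabla_Z(JX)=J\tilde\nabla_Z X=J\big((X\ln\lambda)Z\big)=(X\ln\lambda)JZ$, a vector normal to $M$ because $N_\perp$ is totally real. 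Equating the two expressions forces a tangent vector to equal a normal vector, so each must vanish; since $Z\ne0$ gives $JZ\ne0$, I conclude $X\ln\lambda=0$. As $\mathcal D$ is $J$-invariant, this holds for every $X$ tangent to $N_T$, which is precisely statement (1): $\lambda$ depends only on $N_\perp$.

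Statement (2) is then a formal consequence. Once $\lambda$ is a function of $N_\perp$ alone, the factor $\lambda^2 g_{N_\perp}$ is a genuine Riemannian metric $g^\lambda_{N_\perp}$ on $N_\perp$, so the twisted metric $g=g_{N_T}+\lambda^2 g_{N_\perp}$ is literally the product metric of $N_T$ and $N^\lambda_\perp$; hence $M=N_T\times N^\lambda_\perp$ is a Riemannian product of the holomorphic submanifold $N_T$ and the totally real submanifold $N^\lambda_\perp$, which is by definition a $CR$-product. I expect the only delicate points to be the clean derivation and correct application of the twisted-product mixed connection formula, together with the observation that the two computations of $\tilde\nabla_Z(JX)$ land in complementary (tangent versus normal) subspaces, which is exactly what allows the separate vanishing and thereby yields the theorem.
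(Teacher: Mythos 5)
Your argument is correct, and it is essentially the standard proof: the identity $\left<\sigma(X,Z),JW\right>=-((JX)\ln\lambda)\left<Z,W\right>$ underlying the original argument in the cited source is exactly your two-way computation of $\tilde\nabla_Z(JX)$, with mixed total geodesy killing the left-hand side and $J$-invariance of $\mathcal D$ then giving $X\ln\lambda=0$; statement (2) follows as you say. (The survey itself reproduces no proof, but your route coincides with the one in the reference \cite{c5.1}.)
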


Next, we provide ample examples of  twisted product $CR$-submanifolds in complex Euclidean spaces which are not $CR$-warped product submanifolds.

Let $z:N_T\to{\mathbb C}^m$ be a holomorphic submanifold of a complex Euclidean $m$-space ${\mathbb C}^m$ and $w:N^1_\perp\to {\mathbb C}^\ell$ be a totally real submanifold such that the image of $N_T\times N^1_\perp$ under the product immersion $\psi=(z,w)$ does not contain the origin $(0,0)$ of
${\mathbb C}^m\oplus {\mathbb C}^\ell$. 

Let $j:N_\perp^2\to S^{q-1}\subset {\mathbb E}^{q}$ be an isometric immersion of a Riemannian manifold $N_\perp^2$ into the unit hypersphere $S^{q-1}$ of ${\mathbb E}^{q}$ centered at the origin.  

Consider the map
$$\phi=(z, w)\otimes j:N_T\times N^1_\perp\times N^2_\perp\to ({\mathbb C}^m\oplus {\mathbb C}^\ell)\otimes 
{\mathbb E}^{q}$$
 defined by
\begin{equation}\label{Phi}\phi(p_1,p_2,p_3)=(z(p_1),z(p_2)) \otimes j(p_3),\end{equation}
for $ p_1\in N_T, \; p_2\in N^1_\perp,\; p_3\in N^2_\perp$.

On $({\mathbb C}^m\oplus {\mathbb C}^\ell)\otimes {\mathbb E}^{q}$ we define a complex structure $J$ by
$$J((B,E)\otimes F)=({\rm i}B,{\rm i} E)\otimes F,\quad {\rm i}=\sqrt{-1},$$ for any $B\in {\mathbb C}^m,\,E\in {\mathbb C}^\ell$ and $F\in {\mathbb E}^{q}$. Then $({\mathbb C}^m\oplus {\mathbb C} ^\ell)\otimes {\mathbb E}^{q}$ becomes a complex
Euclidean $(m+\ell)q$-space ${\mathbb C}^{(m+\ell)q}$.

Let us put $N_\perp=N^1_\perp\times N^2_\perp$. We denote by $|z|$ the distance function from the origin of ${\mathbb C}^m$ to the position of $N_T$ in ${\mathbb C}^m$ via $z$; and denote by  $|w|$
the distance function from the origin of ${\mathbb C}^\ell$ to the position of $N^1_\perp$ in ${\mathbb C}^\ell$ via $w$. We define a function $\lambda$ by $\lambda=\sqrt{|z|^2+|w|^2}$.
Then $\lambda>0$ is a differentiable function on $N_T\times N_\perp$, which depends on both $N_T$ and $N_\perp=N^1_\perp\times N^2_\perp$. 

Let $M$ denote the twisted product $N_T\times_\lambda N_\perp$ with twisted function $\lambda$. Clearly, $M$  is not a warped product.

For such a twisted product $N_T\times_\lambda N_\perp$ in ${\mathbb C}^{(m+\ell)q}$ defined above we have the following.

\begin{proposition} \label{P:10.1}   \cite{c5.1} The map
$\phi=(z, w)\otimes j:N_T\times_\lambda N_\perp\to {\mathbb C}^{(m+\ell)q} $ 
defined by \eqref{Phi} satisfies the following properties:

\begin{enumerate}
\item  $\phi=(z, w)\otimes j:N_T\times_\lambda N_\perp\to {\mathbb C}^{(m+\ell)q}$  is an isometric immersion.

\item  $\phi=(z, w)\otimes j:N_T\times_\lambda N_\perp\to
{\mathbb C}^{(m+\ell)q}$ is a twisted product $CR$-submanifold such that $N_T$ is a holomorphic submanifold and $N_\perp$ is a totally real submanifold of ${\mathbb C}^{(m+\ell)q}$.
\end{enumerate}\end{proposition}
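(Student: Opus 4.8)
The plan is to prove both assertions by a single direct computation: I would differentiate $\phi$ factor by factor and then read off the induced metric and the action of the ambient complex structure $J$ from the tensor-product inner product $\langle A\otimes F,A'\otimes F'\rangle=\langle A,A'\rangle\langle F,F'\rangle$ on $({\mathbb C}^m\oplus{\mathbb C}^\ell)\otimes{\mathbb E}^q$. Writing $\psi=(z,w)$ for the product immersion into ${\mathbb C}^{m+\ell}$, differentiation of $\phi=\psi\otimes j$ gives $\phi_*X=(z_*X,0)\otimes j$ for $X$ tangent to $N_T$, $\phi_*Y=(0,w_*Y)\otimes j$ for $Y$ tangent to $N^1_\perp$, and $\phi_*Z=(z,w)\otimes j_*Z$ for $Z$ tangent to $N^2_\perp$. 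Everything then follows from two elementary facts about $j$: since $j$ takes values in the unit sphere $S^{q-1}$ one has $\langle j,j\rangle=|j|^2\equiv1$, whence also $\langle j,j_*Z\rangle=0$. Moreover the hypothesis that the image of $\psi$ misses the origin guarantees that $\lambda=\sqrt{|z|^2+|w|^2}=|\psi|$ is a smooth positive function, so the twisted metric is genuinely Riemannian.

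For statement~(1) I would compute the pulled-back metric blockwise. The two facts above give $|\phi_*X|^2=|z_*X|^2$, $|\phi_*Y|^2=|w_*Y|^2$, and $|\phi_*Z|^2=(|z|^2+|w|^2)|j_*Z|^2=\lambda^2|j_*Z|^2$, while every mixed term vanishes: the $N_T$--$N^1_\perp$ term because ${\mathbb C}^m\perp{\mathbb C}^\ell$, and each term involving $N^2_\perp$ because $\langle j,j_*Z\rangle=0$. Thus $\phi$ pulls the flat metric back to $g_{N_T}+g_{N^1_\perp}+\lambda^2 g_{N^2_\perp}$, and recognizing the right-hand side as the twisted product metric of $M=N_T\times_\lambda N_\perp$ is exactly the isometry assertion; since $\lambda$ varies along $N^1_\perp$ through $|w|$, the product is \emph{twisted} rather than warped. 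Because the three blocks are mutually orthogonal and $z,w,j$ are immersions, $\phi_*$ is injective, so $\phi$ is an isometric immersion.

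For statement~(2) I would take $\mathcal D=\phi_*(TN_T)$ and $\mathcal D^\perp=\phi_*(TN_\perp)$ as the candidate $CR$-decomposition. Holomorphy of $\mathcal D$ is where the hypothesis that $z$ is holomorphic enters: since $J((B,E)\otimes F)=(\mathrm{i}B,\mathrm{i}E)\otimes F$ and $z_*(JX)=\mathrm{i}\,z_*X$, one gets $J\phi_*X=(\mathrm{i}z_*X,0)\otimes j=\phi_*(JX)$, so $\mathcal D$ is $J$-invariant. To show $\mathcal D^\perp$ is totally real I would verify that $J\phi_*(TN_\perp)$ is orthogonal to all of $\phi_*(TM)$; the inputs are that $w$ is totally real (so $\langle\mathrm{i}w_*Y,w_*Y'\rangle=0$) and the identity $\langle\mathrm{i}v,v\rangle=0$ valid for any $v$ in a Hermitian space (which kills $\langle(\mathrm{i}z,\mathrm{i}w),(z,w)\rangle$), together with $\langle j,j_*Z\rangle=0$ and ${\mathbb C}^m\perp{\mathbb C}^\ell$ for the remaining cross terms. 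This yields $J\mathcal D^\perp\subset T^\perp M$, so $M$ is a $CR$-submanifold whose holomorphic factor is $N_T$ and whose totally real factor is $N_\perp$.

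The differentiations and the bilinear bookkeeping are routine; the one delicate point --- and the main obstacle --- is the metric identification. One must keep the two totally real factors distinct and observe the asymmetry between them: $N^1_\perp$ enters the twisting function $\lambda$ through $|w|$, yet its own tangent directions are not rescaled by $\lambda$, whereas those of $N^2_\perp$ are. Tracking this asymmetry correctly is precisely what pins down $\lambda=\sqrt{|z|^2+|w|^2}$ and certifies that the resulting twisted product fails to be a warped product.
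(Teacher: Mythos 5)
Your computational skeleton is correct and is surely the intended one: the blockwise derivatives $\phi_*X=(z_*X,0)\otimes j$, $\phi_*Y=(0,w_*Y)\otimes j$, $\phi_*Z=(z,w)\otimes j_*Z$, the vanishing of all mixed terms via $|j|^2\equiv 1$, $\langle j,j_*Z\rangle=0$ and ${\mathbb C}^m\perp{\mathbb C}^\ell$, and the verification of the $CR$-structure (holomorphy of $\mathcal D=\phi_*(TN_T)$ from $z_*(JX)={\rm i}\,z_*X$, and $J\phi_*(TN_\perp)\perp\phi_*(TM)$ from total reality of $w$ together with $\langle {\rm i}v,v\rangle=0$). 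Statement (2) and the immersion property are fully established. Note that this survey gives no proof of the proposition (it only cites \cite{c5.1}), so the argument has to stand on its own.

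The genuine gap is precisely the step you flag as ``delicate'' and then do not carry out: identifying the induced metric $g_{N_T}+g_{N^1_\perp}+\lambda^2 g_{N^2_\perp}$ with the twisted product metric of $N_T\times_\lambda N_\perp$. With the definition used in this paper, a twisted product metric on $N_T\times N_\perp$ has the form $g_{N_T}+\lambda^2 g_{N_\perp}$ for a \emph{single fixed} Riemannian metric $g_{N_\perp}$ on $N_\perp=N^1_\perp\times N^2_\perp$; matching this against your formula forces $g_{N_\perp}=\lambda^{-2}g_{N^1_\perp}\oplus g_{N^2_\perp}$, which is not a metric on $N_\perp$ alone because $\lambda^2=|z|^2+|w|^2$ varies with the $N_T$-coordinate through $|z|$ (and $|z|$ is never constant on a positive-dimensional holomorphic submanifold of ${\mathbb C}^m$). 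Equivalently, the fiber metrics $g_{N^1_\perp}\oplus\lambda^2 g_{N^2_\perp}$ over two different points of $N_T$ are not conformally related, which is the intrinsic test for a twisted product splitting along $TN_T\oplus TN_\perp$. So ``recognizing the right-hand side as the twisted product metric'' is not a recognition but the entire content of statement (1), and as written your argument does not supply it: you must either state exactly which metric on $N_\perp$ the twisted product $N_T\times_\lambda N_\perp$ is formed with and check the match on the $N^1_\perp$ block (which in your computation carries no factor of $\lambda^2$), or explain the convention of \cite{c5.1} that reconciles the asymmetry. Observing the asymmetry, as you do in your closing paragraph, is not the same as resolving it; everything else in the proposal is sound.
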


Proposition \ref{P:10.1} shows that there are many twisted product $CR$-submanifolds $N_T\times_\lambda N_\perp$ such that $N_T$ are holomorphic submanifolds and $N_\perp$ are totally real submanifolds. Moreover, such twisted product $CR$-submanifolds are not warped product $CR$-submanifolds.

Let $(B,g_B)$ and $(F,g_F)$ be Riemannian manifolds and let $\pi_B: B\times F\to B$ and $\pi_F: B\times F\to F$ be the canonical projections. Also let $, f$ be smooth real-valued functions on $B\times F$. Then the {\it doubly twisted product} of $(B,g_B)$ and $(F,g_F)$
 with twisting functions $b$ and $f$ is defined to be the product manifold
$M=B\times F$ with metric tensor 
$$g=f^2 g_B +b^2 g_F.$$ We denote this kind manifolds
by ${}_f B\times_b F$.

\begin{remark} B. Sahin proved the following.

\begin{theorem} \label{T:Sahin3}  \cite{Sahin2} There do not exist doubly twisted product $CR$-submanifolds in a Kaehler manifold which are not (singly) twisted product CR-submanifolds
in the form ${}_{f_1}M_T\times_{f_2} M_\perp$, where $M_T$ is a holomorphic submanifold
and $M_\perp$ is a totally real submanifold of the Kaehler manifold $\tilde M$.
\end{theorem}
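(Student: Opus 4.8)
The plan is to work entirely with the Levi-Civita connection $\nabla$ of the doubly twisted product $M={}_{f_1}M_T\times_{f_2}M_\perp$, whose induced metric is $g=f_1^2\,g_{M_T}+f_2^2\,g_{M_\perp}$, together with the Gauss and Weingarten formulas and the parallelism $\tilde\nabla J=0$. Writing $\mathcal D=TM_T$ for the holomorphic distribution and $\mathcal D^\perp=TM_\perp$ for the totally real one, the first task is to record the mixed term of the connection. A Koszul-formula computation for lifted vector fields $X\in\mathcal D$ and $Z\in\mathcal D^\perp$ (which commute, $[X,Z]=0$) yields
\[\nabla_Z X=\nabla_X Z=(Z\ln f_1)\,X+(X\ln f_2)\,Z.\]
The coefficient $Z\ln f_1$ is exactly the quantity I must kill: once $Z\ln f_1=0$ for every $Z\in\mathcal D^\perp$, the function $f_1$ depends only on $M_T$, so $f_1^2\,g_{M_T}$ is a genuine metric on $M_T$ and $M$ collapses to the singly twisted product $(M_T,f_1^2 g_{M_T})\times_{f_2}M_\perp$, which is precisely the asserted conclusion.

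Next I would exploit the Kaehler identity $\tilde\nabla_X(JZ)=J\,\tilde\nabla_X Z$ for $X\in\mathcal D$, $Z\in\mathcal D^\perp$, noting that $JZ$ is normal (since $M_\perp$ is totally real) while $JX\in\mathcal D$. Expanding the left-hand side by the Weingarten formula and the right-hand side by the Gauss formula, substituting the connection formula above, and then taking the tangential part and pairing with $JY$ for $Y\in\mathcal D$, the mixed second-fundamental-form terms drop out (because $\langle J\sigma(X,Z),JY\rangle=\langle\sigma(X,Z),Y\rangle=0$, $\sigma(X,Z)$ being normal and $Y$ tangent). Using that $J$ is a $\tilde g$-isometry and the shape-operator identity $\langle A_{JZ}X,JY\rangle=\langle\sigma(X,JY),JZ\rangle$, one is left with the key relation
\[\langle\sigma(X,JY),JZ\rangle=-(Z\ln f_1)\,g(X,Y),\qquad X,Y\in\mathcal D,\ Z\in\mathcal D^\perp.\]

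The decisive step is a symmetry/antisymmetry argument applied to this relation. Replacing $Y$ by $JY$ (still in $\mathcal D$) and using $J^2=-\,\mathrm{id}$ together with the symmetry of $\sigma$ turns it into $\langle\sigma(X,Y),JZ\rangle=(Z\ln f_1)\,g(X,JY)$. The left-hand side is symmetric in $X,Y$, whereas $g(X,JY)$ is antisymmetric in $X,Y$ since $\omega(X,Y)=g(JX,Y)$ is the skew Kaehler form; a quantity that is simultaneously symmetric and antisymmetric must vanish, so $(Z\ln f_1)\,g(X,JY)=0$ for all $X,Y\in\mathcal D$. Taking $Y=JX$ gives $g(X,JY)=-\|X\|^2\neq0$, which forces $Z\ln f_1=0$ for every $Z\in\mathcal D^\perp$ and produces the reduction to a singly twisted product described above. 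I expect the genuine labor to be bookkeeping rather than conceptual: deriving the mixed connection term cleanly and, after invoking $\tilde\nabla J=0$, keeping straight which pieces survive (tangent versus normal, $\mathcal D$ versus $\mathcal D^\perp$). The symmetry argument itself is short once the key relation is in hand, and I would only need to confirm that properness of the CR-submanifold ($\mathcal D\neq\{0\}$) is what guarantees the final nonvanishing of $g(X,JY)$.
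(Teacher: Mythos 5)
The survey states this result only with a citation to Sahin's paper and gives no proof, so there is nothing in the text to compare against; judged on its own, your argument is correct and is essentially the standard one from the cited source. The Koszul computation $\nabla_XZ=(Z\ln f_1)X+(X\ln f_2)Z$, the use of $\tilde\nabla J=0$ with the Gauss--Weingarten formulas to obtain $\left<\sigma(X,Y),JZ\right>=(Z\ln f_1)\,g(X,JY)$, and the symmetric-versus-antisymmetric comparison forcing $Z\ln f_1=0$ (hence reduction to a singly twisted product) are all sound, with the only standing hypothesis being $\mathcal D\neq\{0\}$, as you note.
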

\end{remark} 

An almost Hermitian manifold $(M,g,J)$ with  almost complex structure $J$ is called a  {\it nearly Kaehler} manifold provided that \cite{gray70} \begin{align}\label{19.1}(\nabla_XJ )X=0, \;\; \forall X\in TM.\end{align}

\begin{remark} Theorem \ref{T:Sahin3} was extended by S. Uddin  in \cite{Uddin} to doubly twisted product $CR$-submanifolds in a nearly Kaehler manifold.\end{remark}

\section{Warped products with a purely real factor in Kaehler manifolds}

Recall from \cite{c1981} that a submanifold $N$ of an almost Hermitian manifold $\tilde M$ is called  {\it purely real} if the complex structure $J$ on $\tilde M$ carries the tangent bundle of $N$ into a transversal  bundle, i.e.,  $J(TN)\cap TN=\{0\}$.

  The next non-existence result is an extension of Theorem 10.4  of \cite{Sahin1}.

\begin{theorem} {\rm \cite{AK08-2,KAJ08}} There do not exist non-trivial warped product submanifolds $N^{0} \times_f N^{T}$ in a Kaehler manifold $\tilde M$ such that
 $N^{T}$ is a complex submanifold and $N^{0}$ is a proper purely real submanifold of $\tilde M$. \end{theorem}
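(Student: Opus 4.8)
The plan is to adapt the argument that proves Theorem~\ref{T:6.1} (the totally real case), carrying along the extra tangential component of $J$ that appears because $N^0$ is only purely real rather than totally real. Suppose, to the contrary, that such a warped product $M=N^0\times_f N^T$ exists with $f$ non-constant. For a vector $X$ tangent to the base $N^0$ I write $JX=PX+FX$, where $PX$ is the part tangent to $M$ and $FX$ the part normal to $M$; the totally real situation is the special case $P\equiv 0$. Throughout I use the standard warped product identities $\nabla_ZW=\nabla^{N^T}_ZW-\<Z,W\>\nabla(\ln f)$ and $\nabla_XZ=\nabla_ZX=(X\ln f)Z$, valid for $X$ tangent to $N^0$ and $Z,W$ tangent to $N^T$, together with the Gauss formula and the Kaehler condition $\tilde\nabla J=0$.

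A preliminary observation is that $PX$ lies in $TN^0$: since $N^T$ is complex, $JW$ is tangent to $N^T$ for every $W\in TN^T$, so $\<JX,W\>=-\<X,JW\>=0$ because the two factors of the warped product are orthogonal; hence the tangential part $PX$ carries no $N^T$-component. The core step is then to evaluate $\<\sigma(Z,W),FX\>$ for $Z,W$ tangent to $N^T$ and $X$ tangent to $N^0$. Writing $FX=JX-PX$ and using the Kaehler identity in the form $\<\tilde\nabla_ZW,JX\>=-\<\tilde\nabla_Z(JW),X\>$, together with the warped product formulas above, I expect to arrive at
\begin{equation}\label{plan:key}\<\sigma(Z,W),FX\>=\<Z,JW\>(X\ln f)+\<Z,W\>(PX\ln f).\end{equation}

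With \eqref{plan:key} in hand the contradiction follows from the symmetry $\sigma(Z,W)=\sigma(W,Z)$. Interchanging $Z$ and $W$ leaves the second term fixed, since $\<Z,W\>$ is symmetric, but replaces $\<Z,JW\>$ by $\<W,JZ\>=-\<Z,JW\>$; hence $\<Z,JW\>(X\ln f)=-\<Z,JW\>(X\ln f)$, so $\<Z,JW\>(X\ln f)=0$ for all $Z,W\in TN^T$. Choosing $W=JZ$ gives $\<Z,JW\>=-\<Z,Z\>\neq0$ for $Z\neq0$, whence $X\ln f=0$ for every $X$ tangent to $N^0$. Thus $f$ is constant, contradicting non-triviality, and no such warped product can exist.

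The delicate point is establishing \eqref{plan:key} with the correct bookkeeping: unlike the totally real case one must retain the possibly non-vanishing tangential term $PX$, verify that it stays inside $TN^0$, and check that its contribution $\<Z,W\>(PX\ln f)$ enters \emph{symmetrically} in $Z$ and $W$, so that it cancels harmlessly in the symmetry comparison instead of obstructing it. Once the decomposition of the Gauss and Weingarten terms into tangential and normal parts is carried out carefully, the antisymmetry of $\<Z,JW\>$ forced by $N^T$ being complex of positive complex dimension does all the work, exactly as in Theorem~\ref{T:6.1}; in particular the argument does not actually require $P$ to be nonzero, so it also recovers the totally real conclusion.
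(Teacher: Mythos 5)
The paper states this theorem only as a citation to \cite{AK08-2,KAJ08} and contains no proof of its own, so there is no internal argument to compare against; judged on its own merits, your proof is correct and complete. The key identity $\<\sigma(Z,W),FX\>=\<Z,JW\>(X\ln f)+\<Z,W\>(PX\ln f)$ does follow from $\tilde\nabla J=0$ together with $\nabla_ZW=\nabla^{N^T}_ZW-\<Z,W\>\nabla(\ln f)$ and your (correct) observation that $PX\in TN^{0}$, and the symmetry of $\sigma$ against the antisymmetry of $\<Z,JW\>$ then forces $X\ln f=0$ for all $X$ tangent to $N^{0}$; this is precisely the polarized form of the argument behind Theorem \ref{T:6.1} and of the proofs in the cited sources, and, as you note, it never uses that $N^{0}$ is proper, so it subsumes the totally real case as well.
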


A {\it hemi-slant submanifold} of an almost Hermitian manifold is a submanifold $M$ with two orthogonal distributions $\mathcal H^\perp$ and $\mathcal H^\theta$ such that $TM= {\mathcal H}^\perp\oplus {\mathcal H}^\theta$, $J{\mathcal H}^\perp\subset T^\perp M$ and ${\mathcal H}^\theta$ is $\theta$-slant. 

\begin{remark} Hemi-slant submanifolds were first defined  in  \cite{carr00} under the name of anti-slant submanifolds.\end{remark}

The next non-existence result was proved in \cite{sahin09a}.

\begin{theorem}\label{T:11.2} There exist no warped product hemi-slant submanifolds $N^{\perp} \times_f N^{\theta}$ in a Kaehler manifold$\tilde{M}$ such that
 $N^{\perp}$ is a totally real submanifold and $N^{\theta}$ is a  proper slant submanifold of $\tilde{M}$. \end{theorem}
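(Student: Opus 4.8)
The plan is to suppose that a warped product $M=N^{\perp}\times_{f}N^{\theta}$ of the stated type exists and to deduce that $f$ is constant, contradicting the non-triviality of a warped product. For $V$ tangent to $M$ I decompose $JV=TV+FV$ into its tangential and normal parts. The hypotheses give the structural facts I shall use throughout: since $N^{\perp}$ is totally real, $TX=0$ and $JX=FX$ for $X$ tangent to $N^{\perp}$; since $N^{\theta}$ is proper $\theta$-slant, $T$ preserves $\mathcal H^{\theta}$ (because $\langle TZ,X\rangle=\langle JZ,X\rangle=-\langle Z,FX\rangle=0$), $T^{2}=-\cos^{2}\theta\,\mathrm{Id}$ on $\mathcal H^{\theta}$, and $\langle TZ,TW\rangle=\cos^{2}\theta\langle Z,W\rangle$, $\langle FZ,FW\rangle=\sin^{2}\theta\langle Z,W\rangle$, with $\cos\theta\neq0$. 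From the Bishop--O'Neill formulas for the warped product I record, for $X$ tangent to $N^{\perp}$ and $Z,W$ tangent to $N^{\theta}$, the relations $\nabla_{X}Z=(X\ln f)Z$ and $\langle\nabla_{Z}W,X\rangle=-(X\ln f)\langle Z,W\rangle$.

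Next I extract a first-order consequence of $\tilde\nabla J=0$. Using $JX=FX$, the skew-symmetry of $J$, the Kaehler condition, and the Gauss--Weingarten formulas, I compute
\[\langle\sigma(Z,W),FX\rangle=\langle\tilde\nabla_{Z}W,JX\rangle=-\langle\tilde\nabla_{Z}JW,X\rangle,\]
and after expanding $JW=TW+FW$ and inserting the two warped-product relations this becomes
\begin{equation}\label{starprop}\langle\sigma(Z,W),FX\rangle-\langle\sigma(X,Z),FW\rangle=(X\ln f)\langle Z,TW\rangle.\end{equation}
Since the left-hand factor $\langle\sigma(Z,W),FX\rangle$ is symmetric in $Z,W$ whereas $\langle\sigma(X,Z),FW\rangle$ is not, interchanging $Z$ and $W$ in \eqref{starprop} and subtracting yields the antisymmetry relation $\langle\sigma(X,W),FZ\rangle-\langle\sigma(X,Z),FW\rangle=2(X\ln f)\langle Z,TW\rangle$.

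The crucial difficulty is that every first-order manipulation I have tried collapses back to \eqref{starprop}, which only determines the skew part of $(Z,W)\mapsto\langle\sigma(X,Z),FW\rangle$ and cannot by itself force $X\ln f=0$. The plan is therefore to produce an independent relation from the normal connection $D$. I use the companion Kaehler identity $(\bar\nabla_{X}F)Z:=D_{X}FZ-F\nabla_{X}Z$, which equals $(J\sigma(X,Z))^{\perp}-\sigma(X,TZ)$, where $(\,\cdot\,)^{\perp}$ denotes the normal component; with $\nabla_{X}Z=(X\ln f)Z$ this gives $D_{X}FZ=(X\ln f)FZ+(J\sigma(X,Z))^{\perp}-\sigma(X,TZ)$. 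Pairing with $FW$, and using the slant computation $\langle(J\sigma(X,Z))^{\perp},FW\rangle=\langle\sigma(X,Z),FTW\rangle$ together with $\langle FZ,FW\rangle=\sin^{2}\theta\langle Z,W\rangle$, expresses $\langle D_{X}FZ,FW\rangle$ through the scalars $\langle\sigma(X,\cdot),F\cdot\rangle$ and the term $(X\ln f)\sin^{2}\theta\langle Z,W\rangle$. On the other hand, differentiating $\langle FZ,FW\rangle=\sin^{2}\theta\langle Z,W\rangle$ along $X$ and using that $D$ is metric gives $\langle D_{X}FZ,FW\rangle+\langle FZ,D_{X}FW\rangle=2\sin^{2}\theta\,(X\ln f)\langle Z,W\rangle$. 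Symmetrizing the former identity in $Z,W$ and comparing with the latter cancels the $\sin^{2}\theta$ terms and leaves the relation $\langle\sigma(X,Z),FTW\rangle-\langle\sigma(X,TZ),FW\rangle+\langle\sigma(X,W),FTZ\rangle-\langle\sigma(X,TW),FZ\rangle=0$.

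Finally I substitute the antisymmetry relation, rewritten through $T^{2}=-\cos^{2}\theta\,\mathrm{Id}$ and $\langle TZ,TW\rangle=\cos^{2}\theta\langle Z,W\rangle$, into the displayed four-term relation. Grouping the terms into two skew pairs, each pair contributes $2(X\ln f)\cos^{2}\theta\langle Z,W\rangle$, so the whole expression equals $4(X\ln f)\cos^{2}\theta\langle Z,W\rangle$ and must vanish. Taking $W=Z$ a unit vector and using $\cos\theta\neq0$ forces $X\ln f=0$ for every $X$ tangent to $N^{\perp}$; hence $f$ is constant, so $M$ is a trivial warped product and no proper example exists. I expect the main obstacle to be exactly the construction of this second relation: the first-order identities are self-referential, so the normal connection must be brought in, and one must handle with care the tensorial reductions such as $\langle\nabla_{Z}TW,X\rangle=-(X\ln f)\langle Z,TW\rangle$ (which is legitimate because the horizontal projection of the induced connection on two fibre-tangent fields is tensorial, even though $TW$ need not be a basic lift along $N^{\perp}$), and then invoke the slant identities to make the second-fundamental-form terms cancel.
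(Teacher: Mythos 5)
The first thing to say is that this survey does not actually prove Theorem \ref{T:11.2}: the statement is quoted from Sahin's paper \cite{sahin09a} with no argument given, so there is no in-paper proof to compare against, and I am judging your proof on its own terms. It is correct. Your tangential relation $\langle\sigma(Z,W),FX\rangle-\langle\sigma(X,Z),FW\rangle=(X\ln f)\langle Z,TW\rangle$ is derived properly (it is the tangential part of $\tilde\nabla J=0$ combined with $\nabla_ZX=(X\ln f)Z$ and the total umbilicity of the fibres), and your diagnosis that every purely tangential manipulation collapses back to it is accurate: I checked that substituting $W\mapsto TW$, or working instead from $\tilde\nabla_XJZ=J\tilde\nabla_XZ$ tangentially, reproduces the same identity, so it only determines the skew part of $(Z,W)\mapsto\langle\sigma(X,Z),FW\rangle$ and cannot alone kill $X\ln f$. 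The normal-bundle input is therefore essential, and your execution of it checks out: $D_XFZ=(X\ln f)FZ+(J\sigma(X,Z))^{\perp}-\sigma(X,TZ)$ is the correct normal part of the Kaehler identity, the pairing $\langle(J\sigma(X,Z))^{\perp},FW\rangle=\langle\sigma(X,Z),FTW\rangle$ follows from $JFW=-\sin^{2}\theta\,W-FTW$, and symmetrizing against the metric identity $X\langle FZ,FW\rangle=2\sin^{2}\theta(X\ln f)\langle Z,W\rangle$ indeed cancels the $\sin^{2}\theta$ terms and leaves the four-term relation; inserting the skew-symmetry relation into its two natural pairings gives $4\cos^{2}\theta\,(X\ln f)\langle Z,W\rangle=0$, hence $X\ln f=0$ since $\theta\neq\pi/2$. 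The tensoriality caveat you flag (applying the fibre second fundamental form to $TW$, which need not be a basic lift) is handled correctly, as is the verification that $T$ preserves $\mathcal H^{\theta}$ so that $T^{2}=-\cos^{2}\theta$ there. This is the same overall strategy as the original source --- extract $\cos^{2}\theta\,(X\ln f)\|Z\|^{2}=0$ from the Kaehler identities plus the warped-product connection formulas --- with your particular mechanism for producing the second, independent equation being a clean and valid one. One cosmetic point: as literally stated the theorem would exclude even Riemannian products (which are warped products with constant $f$); your conclusion that $f$ must be constant, i.e.\ that no non-trivial such warped product exists, is the intended content.
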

 
For warped product submanifolds $N^{\theta} \times_f N^{\perp}$, we also have the following result from \cite{sahin09a}.

\begin{theorem}\label{T:11.3} Let $N$ be an $(m+n)$-dimensional mixed geodesic warped product  submanifold  $N^{\theta}\times_f N^{\perp}$ in a Kaehler manifold $\tilde{M}^{m+n}$ such that $N^{\theta}$ is a proper slant submanifold and $N^{\perp}$ be a totally real submanifold of $\tilde{M}^{m+n}$. Then we have:
\begin{enumerate}
  \item [{\rm (1)}]  The second fundamental form $\sigma$ of $N$  satisfies
  \begin{equation}\label{11.1}
  ||\sigma||^2 \geq m\, (\cot^2\theta) | \nabla (\ln f)|^2, \;\; m=\dim N^{\perp}.   \end{equation}
  
  \item [{\rm (2)}] If the equality  of \e{11.1}  holds identically, then $N^{\theta}$  is a totally geodesic submanifold and $N^\perp$  is a totally umbilical submanifold of $\tilde{M}$.
Moreover, $M$ is never a minimal submanifold of $\tilde{M}$. 
\end{enumerate}
\end{theorem}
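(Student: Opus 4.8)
The plan is to follow the route behind Theorem \ref{T:6.2}, adapting it from the holomorphic to the slant factor. Write $JX = PX + FX$ for $X$ tangent to $N^{\theta}$, where $PX$ is the tangential and $FX$ the normal component; for a proper $\theta$-slant submanifold one has $P^2 = -(\cos^2\theta)\,\mathrm{Id}$ together with $\<PX,PY\> = \cos^2\theta\,\<X,Y\>$ and $\<FX,FY\> = \sin^2\theta\,\<X,Y\>$. The heart of the proof is the single pointwise identity, valid for $X$ tangent to $N^{\theta}$ and $Z,W$ tangent to $N^{\perp}$,
\begin{equation}\label{plan:key}\<\sigma(Z,W),FX\> = (PX\ln f)\,\<Z,W\>.\end{equation}
To establish this I would start from $\<\sigma(Z,W),FX\> = \<\tilde\nabla_Z W,FX\>$ (the tangential part of $\tilde\nabla_Z W$ dies against the normal vector $FX$), write $FX = JX - PX$, and treat the two terms separately. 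For the $JX$-term I would move $J$ across using $\tilde\nabla J = 0$, turn $\tilde\nabla_Z(JW)$ into the Weingarten term $-A_{JW}Z$ (legal since $JW$ is normal, $N^{\perp}$ being totally real), and recognise $\<A_{JW}Z,X\> = \<\sigma(Z,X),JW\>$, which vanishes \emph{precisely by the mixed-geodesic hypothesis}. For the $PX$-term I would insert the warped-product formula $\nabla_Z W = \nabla^{N^{\perp}}_Z W - \<Z,W\>\nabla(\ln f)$, whose only surviving contribution is $-\<Z,W\>\,(PX\ln f)$. Combining the two gives \eqref{plan:key}.

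Next I would convert \eqref{plan:key} into the stated bound by a frame computation. Fix orthonormal frames $X_1,\dots,X_n$ of $TN^{\theta}$ and $Z_1,\dots,Z_m$ of $TN^{\perp}$ for the warped metric; then $\{FX_i/\sin\theta\}$ is an orthonormal family in the normal bundle. Discarding every contribution to $||\sigma||^2$ except the components of $\sigma(Z_a,Z_b)$ along these directions,
\begin{align}
||\sigma||^2 &\ge \sum_{a,b=1}^m |\sigma(Z_a,Z_b)|^2 \ge \sum_{a,b=1}^m\sum_{i=1}^n \frac{1}{\sin^2\theta}\<\sigma(Z_a,Z_b),FX_i\>^2\notag\\
&= \frac{1}{\sin^2\theta}\sum_{a=1}^m\sum_{i=1}^n (PX_i\ln f)^2 = \frac{m}{\sin^2\theta}\sum_{i=1}^n (PX_i\ln f)^2,\notag
\end{align}
using \eqref{plan:key} and $\<Z_a,Z_b\>=\delta_{ab}$. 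Finally, since $\{PX_i/\cos\theta\}$ is again an orthonormal basis of $TN^{\theta}$ (here the invertibility of $P$ for a \emph{proper} slant factor is essential) and $\nabla(\ln f)$ already lies in $TN^{\theta}$, one gets $\sum_i (PX_i\ln f)^2 = \cos^2\theta\,|\nabla(\ln f)|^2$, and the chain collapses to $||\sigma||^2 \ge m\,(\cot^2\theta)\,|\nabla(\ln f)|^2$, which is \eqref{11.1}.

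For part (2) I would read off which inequalities were used. Equality in the first step forces $\sigma(X_i,X_j)=0$, i.e.\ (since $N^{\theta}\times\{q\}$ is totally geodesic in the warped product) $N^{\theta}$ is totally geodesic in $\tilde M$; equality in the second forces each $\sigma(Z_a,Z_b)$ to lie in $\mathrm{span}\{FX_i\}$, whence \eqref{plan:key} yields $\sigma(Z_a,Z_b) = \delta_{ab}\,\zeta$ with the fixed normal vector $\zeta = (\sin^2\theta)^{-1}\sum_i (PX_i\ln f)\,FX_i$. Adding to this the fibre second fundamental form $-\<Z,W\>\nabla(\ln f)$ shows the second fundamental form of $N^{\perp}$ in $\tilde M$ equals $\<Z,W\>(\zeta-\nabla(\ln f))$, so $N^{\perp}$ is totally umbilical. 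For the last assertion I would compute $\mathrm{trace}\,\sigma = \sum_i\sigma(X_i,X_i)+\sum_a\sigma(Z_a,Z_a)=m\zeta$, and note $\zeta$ vanishes only where $\nabla(\ln f)=0$; as $f$ is non-constant on a non-trivial warped product, $\overrightarrow{H}=\tfrac{m}{m+n}\zeta\neq0$, so $M$ is never minimal. The main obstacle I anticipate is purely the bookkeeping in \eqref{plan:key}: getting the signs right when shuttling $J$ through $\tilde\nabla$, keeping tangential and normal projections straight, and invoking the mixed-geodesic hypothesis at exactly the point where the $\<\sigma(Z,X),JW\>$ term appears; the frame and equality steps are then routine.
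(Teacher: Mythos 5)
This survey states Theorem \ref{T:11.3} without proof (it is quoted from \cite{sahin09a}), and your argument is correct and essentially the standard one from that reference: the pointwise identity $\<\sigma(Z,W),FX\>=(PX\ln f)\<Z,W\>$, in which the mixed-geodesic hypothesis kills the $\<\sigma(X,Z),JW\>$ term, followed by Bessel's inequality against the orthonormal family $\{FX_i/\sin\theta\}$ and the observation that $\sum_i(PX_i\ln f)^2=\cos^2\theta\,\|\nabla(\ln f)\|^2$ since $P/\cos\theta$ is an isometry of $TN^{\theta}$. The only caveat is the final assertion: ``never minimal'' genuinely requires the warping function to be non-constant, as you implicitly assume, since for a trivial warped product the equality case forces $\sigma\equiv 0$.
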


 A submanifold $N$ of an almost Hermitian manifold $\tilde M$ is called {\it semi-slant}  \cite{pap94}. if the tangent bundle $TN$ is the direct sum ${\mathcal H}\oplus {\mathcal H^\theta}$ of two orthogonal distributions ${\mathcal H}$ and ${\mathcal H}^\theta$, where ${\mathcal H}$ is holomorphic, i.e. ${\mathcal H}$ is invariant with respect to the complex structure $J$ of $\tilde M$ and ${\mathcal H}^\theta$ is a $\theta$-slant distribution\index{general}{Distribution!slant}, i.e., the angle $\theta$ between $JX$ and ${\mathcal H}^\theta_x$ is constant for any unit vector $X\in {\mathcal H}^\theta_x$ and for any point $x\in N$.

Contrast to $CR$-warped products $N^T\times_f N^\perp$, we have the following non-existence result for semi-slant warped product submanifolds. 

\begin{theorem}\label{T:11.4} {\rm  \cite{Sahin1}} There do not exist warped product submanifolds of the form: $N^T \times_f N^\theta$ in a Kaehler  manifold $\tilde M$ such that $N^T$ is a complex submanifold and $N^\theta$ is a proper slant submanifold  of $\tilde M$.
\end{theorem}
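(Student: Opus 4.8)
The plan is to assume such a warped product $M=N^T\times_f N^\theta$ exists with $N^T$ holomorphic and $N^\theta$ proper $\theta$-slant, and to show that the warping function $f$ must be constant, so that $M$ degenerates to a Riemannian product and is not a genuine warped product. Throughout I write $\mathcal D^T=TN^T$ (so $J\mathcal D^T=\mathcal D^T$) and $\mathcal D^\theta=TN^\theta$, and for $Z\in\mathcal D^\theta$ I decompose $JZ=PZ+FZ$ into its tangential part $PZ$ and normal part $FZ$; for a normal vector $\eta$ I write $t\eta$ for the tangential part of $J\eta$. I will freely use the standard slant identities $P^2=-\cos^2\theta\,\mathrm{Id}$, $\langle PZ,PW\rangle=\cos^2\theta\,\langle Z,W\rangle$ and $\langle FZ,FW\rangle=\sin^2\theta\,\langle Z,W\rangle$ on $\mathcal D^\theta$, together with the warped-product connection formula $\nabla_X Z=\nabla_Z X=(X\ln f)\,Z$ valid for $X\in\mathcal D^T$ and $Z\in\mathcal D^\theta$.

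First I would extract the necessary consequences of the Kaehler condition $\tilde\nabla J=0$. Applying $\tilde\nabla_X(JZ)=J\tilde\nabla_X Z$ together with the Gauss and Weingarten formulas and comparing tangential parts yields the shape-operator relation $A_{FZ}X=-t\,\sigma(X,Z)$ for $X\in\mathcal D^T$, $Z\in\mathcal D^\theta$. Next, the central computation is to evaluate $\langle\tilde\nabla_Z W,JX\rangle$ for $Z,W\in\mathcal D^\theta$ and $X\in\mathcal D^T$ in two different ways: once by moving $J$ across $\tilde\nabla$ (Kaehler) and using $\nabla_Z X=(X\ln f)Z$, and once by noting that $JX$ is tangent and using the warped formula $\langle\nabla_Z W,JX\rangle=-(JX\ln f)\langle Z,W\rangle$. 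Equating the two gives the key identity
\[
\langle\sigma(X,Z),FW\rangle=-(JX\ln f)\langle Z,W\rangle+(X\ln f)\langle PZ,W\rangle .
\]

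The decisive step is a symmetry relation: using $\langle\sigma(X,Z),FW\rangle=\langle A_{FW}X,Z\rangle$ together with $A_{FW}X=-t\,\sigma(X,W)$ and the skew-symmetry of $J$, one gets $\langle\sigma(X,Z),FW\rangle=\langle\sigma(X,W),FZ\rangle$. Feeding this into the key identity written for the pairs $(Z,W)$ and $(W,Z)$ and subtracting, the $(JX\ln f)$-terms cancel and, since $P$ is skew on $\mathcal D^\theta$, one is left with $(X\ln f)\,\langle PW,Z\rangle=0$ for all $X\in\mathcal D^T$ and all $Z,W\in\mathcal D^\theta$. Choosing $Z=PW$ with $W\neq 0$ gives $\langle PW,Z\rangle=\langle PW,PW\rangle=\cos^2\theta\,|W|^2$, which is nonzero precisely because $N^\theta$ is proper slant ($\cos\theta\neq 0$, so $P$ is nonsingular). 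Hence $X\ln f=0$ for every $X\in\mathcal D^T$; as $f$ is a function on $N^T$ this forces $f$ to be constant, contradicting the assumption of a non-trivial warped product.

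I expect the main obstacle to be isolating the two auxiliary relations that make the scheme close up, namely the shape-operator relation $A_{FZ}X=-t\,\sigma(X,Z)$ and, more importantly, the symmetry $\langle\sigma(X,Z),FW\rangle=\langle\sigma(X,W),FZ\rangle$: the key identity taken by itself is self-consistent and produces only tautologies, so the contradiction emerges solely from the tension between this symmetry and the asymmetric appearance of $Z$ and $W$ in the key identity. It is also worth pointing out where properness is essential: if $N^\theta$ were totally real ($\theta=\pi/2$) then $P\equiv 0$, the relation $(X\ln f)\langle PW,Z\rangle=0$ becomes vacuous, and no contradiction arises — in agreement with the fact that genuine $CR$-warped products $N_T\times_f N_\perp$ do exist.
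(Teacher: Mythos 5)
The paper itself offers no proof of Theorem \ref{T:11.4}: it is a survey item quoted from Sahin \cite{Sahin1}, so your attempt can only be measured against the standard argument in that reference. Your overall strategy is the right one, and your ``key identity'' $\left<\sigma(X,Z),FW\right>=-((JX)\ln f)\left<Z,W\right>+(X\ln f)\left<PZ,W\right>$ is correctly derived and is exactly the first half of Sahin's proof. The gap is in the step you yourself flag as the main obstacle: the shape-operator relation $A_{FZ}X=-t\,\sigma(X,Z)$ does \emph{not} follow from ``comparing tangential parts'' of $\tilde\nabla_X(JZ)=J\tilde\nabla_XZ$. That comparison actually gives $A_{FZ}X=(\nabla_XP)Z-t\,\sigma(X,Z)$ with $(\nabla_XP)Z=\nabla_X(PZ)-P\nabla_XZ$, and there is no a priori reason for $(\nabla_XP)Z$ to vanish: $PZ$ is not the lift of a vector field on $N^\theta$, so $\nabla_X(PZ)=(X\ln f)PZ$ cannot be invoked. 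Worse, a direct computation shows $\left<(\nabla_XP)Z,W\right>=\left<\sigma(X,W),FZ\right>-\left<\sigma(X,Z),FW\right>=-2(X\ln f)\left<PZ,W\right>$, so the assertion $(\nabla_XP)Z=0$ is \emph{equivalent} to the symmetry $\left<\sigma(X,Z),FW\right>=\left<\sigma(X,W),FZ\right>$, which by your own key identity is equivalent to $(X\ln f)\left<PZ,W\right>=0$, i.e.\ to the conclusion of the theorem. As written, the decisive step is therefore circular. (Indeed, when contracted with $W\in\mathcal D^\theta$ the tangential-part identity with the $(\nabla_XP)Z$ term correctly restored reduces to the tautology $\left<A_{FZ}X,W\right>=\left<\sigma(X,W),FZ\right>$, carrying no information.)

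The argument is repairable, and the repair is essentially what \cite{Sahin1} does: one must produce a second, independent evaluation of $\left<\sigma(X,Z),FPZ\right>$. Write $FPZ=JPZ-P^2Z=JPZ+\cos^2\theta\,Z$ and compute
\begin{equation*}
\left<\sigma(X,Z),FPZ\right>=\left<\tilde\nabla_XZ,JPZ\right>+\cos^2\theta\left<\tilde\nabla_XZ,Z\right>
=-\left<\tilde\nabla_X(JZ),PZ\right>+\cos^2\theta\,(X\ln f)\|Z\|^2,
\end{equation*}
and then $\left<\tilde\nabla_X(JZ),PZ\right>=\tfrac12X\|PZ\|^2-\left<A_{FZ}X,PZ\right>=\cos^2\theta\,(X\ln f)\|Z\|^2-\left<\sigma(X,PZ),FZ\right>$, using only metric compatibility and the slant identity $\|PZ\|^2=\cos^2\theta\|Z\|^2$. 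This yields $\left<\sigma(X,Z),FPZ\right>=\left<\sigma(X,PZ),FZ\right>$ legitimately, i.e.\ the needed symmetry for the pair $(Z,PZ)$; combined with your key identity, which gives $+(X\ln f)\cos^2\theta\|Z\|^2$ for the left side and $-(X\ln f)\cos^2\theta\|Z\|^2$ for the right, one gets $(X\ln f)\cos^2\theta\|Z\|^2=0$ and hence $X\ln f=0$ by properness. Your endgame (the role of $\cos\theta\neq0$ and the correct observation that the argument degenerates when $\theta=\pi/2$, consistent with the existence of $CR$-warped products) is sound; only the justification of the symmetry needs to be replaced by a non-circular computation such as the one above.
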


A natural extension of semi-slant submanifolds are pointwise semi-slant submanifolds defined and studied in \cite{sahin13d}.

 A submanifold $N$ of an almost Hermitian manifold $\tilde M$ is called {\it pointwise semi-slant} if the tangent bundle $TN$ is the direct sum ${\mathcal H}\oplus {\mathcal H^\theta}$ of two orthogonal distributions ${\mathcal H}$ and ${\mathcal H}^\theta$, where ${\mathcal H}$ is a holomorphic distribution and ${\mathcal H}^\theta$ is a pointwise slant distribution, i.e., for any given point $x\in N$ the angle $\theta(x)$ between $JX$ and ${\mathcal H}^\theta_x$ is independent of the choice of the unit vector $X\in {\mathcal H}^\theta_x$.

For warped product pointwise semi-slant  submanifolds, we have the following.

\begin{theorem}\label{T:11.5} {\rm \cite{sahin13d}} Let $N=N^T\times_f N^{\theta}$ be a non-trivial warped product pointwise semi-slant  submanifold of a Kaehler manifold $\tilde M^{h+p}$, where $N^{T}$ is a complex submanifold with $\dim_{\bf C} N^T=h$ and $N^{\theta}$ is a proper pointwise slant submanifold with $\dim N^\theta=p$. Then we have
\begin{enumerate}
  \item [{\rm (1)}] The second fundamental form $\sigma$ of $N$ satisfies
  \begin{equation}\label{11.80} ||\sigma||^2 \geq 2p (\csc^2\theta+\cot^2\theta) |\nabla^T (\ln f)|^2.
  \end{equation}
  \item [{\rm (2)}] If the equality  of \e{11.80}  holds identically, then $N^T$  is a totally geodesic complex submanifold and $N^{\theta}$  is a totally umbilical submanifold of $\tilde M^{h+p}$.
Moreover, $N$ is a minimal submanifold of $\tilde M^{h+p}$. 
\end{enumerate}
\end{theorem}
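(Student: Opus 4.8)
The plan is to adapt the scheme behind the $CR$-warped product estimate \eqref{6.2} of Theorem~\ref{T:6.2} to the slant setting, the genuinely new ingredient being the tangential part $T$ of $J$ on the slant distribution. Throughout I write $\mathcal H=TN^T$ and $\mathcal H^\theta=TN^\theta$, and for $Z\in\mathcal H^\theta$ I decompose $JZ=TZ+FZ$ into tangential and normal parts. First I would assemble the tools. From the warped-product structure (the O'Neill formulas, cf.\ \cite{oneill}) one has $\nabla_XZ=\nabla_ZX=(X\ln f)Z$ for $X\in\mathcal H$, $Z\in\mathcal H^\theta$, while the vertical derivative carries a horizontal correction, $\nabla_ZW=\nabla^{N^\theta}_ZW-g(Z,W)\nabla(\ln f)$. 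From pointwise slant geometry one has $T^2=-\cos^2\theta\,\mathrm{Id}$, $g(TZ,TW)=\cos^2\theta\,g(Z,W)$, $|FZ|^2=\sin^2\theta|Z|^2$, the skew-symmetry of $T$, and $T(\mathcal H^\theta)\subseteq\mathcal H^\theta$. I would also record the crucial dimension count: since $\dim_{\mathbb R}\tilde M^{h+p}=2h+2p$ while $\dim N=2h+p$, the normal bundle has rank $p$ and coincides with $F\mathcal H^\theta$; hence $\{FZ_r/\sin\theta\}_{r=1}^{p}$ is an orthonormal frame of $T^\perp N$ whenever $\{Z_r\}$ is orthonormal in $\mathcal H^\theta$.

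The heart of the argument is a pointwise formula for the mixed components of $\sigma$. Starting from the Kaehler identity $\tilde\nabla_Z(JX)=J\tilde\nabla_ZX$, inserting the Gauss--Weingarten formulas and the warped-product derivatives, I obtain
\[
((JX)\ln f)\,Z+\sigma(Z,JX)=(X\ln f)(TZ+FZ)+J\sigma(Z,X).
\]
Pairing with $FW$ and using $J(FW)=-\sin^2\theta\,W-F(TW)$ (which follows from $FW=JW-TW$ and $T^2=-\cos^2\theta\,\mathrm{Id}$) produces a relation between $g(\sigma(X,Z),FW)$ and $g(\sigma(X,Z),F(TW))$. Replacing $X$ by $JX$, and separately $W$ by $TW$, gives two further relations; eliminating the unwanted $F(TW)$-terms and dividing by $\sin^2\theta$ (legitimate since $\theta\neq0$) yields the key identity
\[
g(\sigma(X,Z),FW)=-((JX)\ln f)\,g(Z,W)+(X\ln f)\,g(TZ,W),\qquad X\in\mathcal H,\ Z,W\in\mathcal H^\theta.
\]
I expect this elimination to be the main obstacle: one must arrange the substitutions so that every term containing $F(TW)$ cancels, tracking exactly which slant identities are invoked at each step.

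With the key identity in hand the inequality becomes bookkeeping. Choosing an orthonormal frame $\{X_1,\dots,X_{2h}\}$ of $\mathcal H$ closed under $J$ (so that $\sum_a((JX_a)\ln f)^2=\sum_a(X_a\ln f)^2=|\nabla^T\ln f|^2$) and $\{Z_1,\dots,Z_p\}$ of $\mathcal H^\theta$, I expand $\|\sigma\|^2=\sin^{-2}\theta\sum_{i,j,r}g(\sigma(e_i,e_j),FZ_r)^2$ and split it into the $\mathcal H\times\mathcal H$, the $\mathcal H^\theta\times\mathcal H^\theta$, and the mixed blocks, all three nonnegative. Discarding the first two and evaluating the mixed block via the key identity, the cross term drops out because $T$ is trace-free (skew-symmetric), while the two surviving squares contribute $p\,|\nabla^T\ln f|^2$ and $p\cos^2\theta\,|\nabla^T\ln f|^2$, using $\sum_{s,r}g(TZ_s,Z_r)^2=\sum_s|TZ_s|^2=p\cos^2\theta$. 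Multiplying by $2/\sin^2\theta$ gives precisely $2p(\csc^2\theta+\cot^2\theta)|\nabla^T\ln f|^2$, which is \eqref{11.80}.

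For the equality case, equality forces both discarded blocks to vanish, namely $\sigma(\mathcal H,\mathcal H)=0$ and $\sigma(\mathcal H^\theta,\mathcal H^\theta)=0$. Since the base $N^T$ is totally geodesic in the warped product $N$, the first condition makes $N^T$ a totally geodesic complex submanifold of $\tilde M$. For the fibre, the O'Neill formula gives $h^{N^\theta}_{\tilde M}(Z,W)=-g(Z,W)\nabla(\ln f)+\sigma(Z,W)$, so the condition $\sigma(\mathcal H^\theta,\mathcal H^\theta)=0$ reduces it to $h^{N^\theta}_{\tilde M}(Z,W)=-g(Z,W)\nabla(\ln f)$; thus $N^\theta$ is totally umbilical in $\tilde M$ with mean curvature vector $-\nabla(\ln f)$. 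Finally $\mathrm{trace}\,\sigma=\sum_a\sigma(X_a,X_a)+\sum_s\sigma(Z_s,Z_s)=0$, so $N$ is minimal in $\tilde M$, which completes the characterization.
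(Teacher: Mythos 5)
The survey states Theorem \ref{T:11.5} without proof (it is quoted from \cite{sahin13d}), so there is no in-paper argument to compare against; your proposal is correct and reconstructs essentially the argument of that source: derive the mixed-component identity from the Kaehler condition plus the warped-product connection formulas, sum over adapted frames, and read off the equality case from the discarded blocks. Two small remarks: the key identity $g(\sigma(X,Z),FW)=(X\ln f)\,g(TZ,W)-((JX)\ln f)\,g(Z,W)$ falls out immediately by pairing $\tilde\nabla_Z(JX)=J\tilde\nabla_ZX$ with the \emph{tangent} vector $W$ and using $g(J\sigma(X,Z),W)=-g(\sigma(X,Z),FW)$, so the elimination of the $F(TW)$-terms that you flag as the main obstacle can be bypassed entirely (though your elimination does close up correctly); and the identification $T^\perp N=F\mathcal{H}^\theta$ is used only to evaluate the mixed block of $\|\sigma\|^2$ exactly --- in an ambient Kaehler manifold of higher codimension the same computation, restricted to the $F\mathcal{H}^\theta$-components of $\sigma$, still yields \eqref{11.80}, with the equality analysis then also forcing the mixed components along the complementary invariant subbundle $\mu$ to vanish.
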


Pointwise bi-slant immersions are defined as follows (cf. \cite{CU}).

 A submanifold $N$ of an almost Hermitian manifold $\tilde M$ is called {\it{pointwise bi-slant}}\index{general}{Submanifold!pointwise bi-slant} if there exists a pair of orthogonal distributions ${\mathcal{H}}_1$ and ${\mathcal{H}}_2$  of $M$ such that the following three conditions hold.
\begin{enumerate}
\item[{(a)}] $TN= {\mathcal{H}}_1\oplus{\mathcal{H}}_2$;
\item[{(b)}] $J{\mathcal{H}}_1 \perp {\mathcal{H}}_2$ and $J{\mathcal{H}}_2 \perp {\mathcal{H}}_1$;

\item[{(c)}] Each distribution  ${\mathcal{H}}_i$ is pointwise slant with slant function $\theta_i\, (i=1,2)$.
\end{enumerate}
 
 \noindent   A pointwise bi-slant submanifold is a {\it bi-slant submanifold} if both slant functions $\theta_1$ and $\theta_2$ are constant.

 Analogous to $CR$-warped products, Chen and  Uddin defined warped product pointwise bi-slant submanifolds in \cite{CU} as follows. 

A warped product $N_1\times_f N_2$ in an almost Hermitian manifold $\tilde M$ is called a {\it warped product pointwise bi-slant submanifold} if both factors $N_1$ and $N_2$ are pointwise slant submanifolds of $\tilde M$.
A warped product pointwise bi-slant submanifold $N_1\times_fN_2$ is called {\it warped product bi-slant} if both $N_1$ and $N_2$ are slant submanifolds. 

For warped product bi-slant submanifolds we have the following classification result. 

\begin{theorem}\label{T:11.6}  {\rm \cite{CU}} Let $M=M_{\theta_1}\times_fM_{\theta_2}$ be a warped product bi-slant submanifold  in a Kaehler manifold $\tilde M$. Then one of the following two cases must occurs:
\begin{enumerate}
\item[{\rm (i)}]  The warping function $f$ is constant, i.e., $M$ is a Riemannian product of two slant submanifolds;

\item[{\rm (ii)}]  The slant angle of $M_{\theta_2}$ satisfies $\theta_2={\pi}/{2}$, i.e., $M$ is a warped product hemi-slant submanifold with $M_{\theta_2}$ being a  totally real submanifold $M_\perp$ of $\tilde M$.
\end{enumerate}
\end{theorem}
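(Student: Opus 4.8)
The plan is to work entirely on $M$ using the Kaehler condition $\tilde\nabla J=0$ together with the warped-product connection formulas. Write $N_1=M_{\theta_1}$ for the base (on which $f$ lives) and $N_2=M_{\theta_2}$ for the fibre, and set $\mathcal H_1=TN_1$, $\mathcal H_2=TN_2$. For a tangent vector $X$ decompose $JX=PX+FX$ into tangential and normal parts, and for a normal vector $\xi$ write $J\xi=t\xi+n\xi$. The bi-slant hypothesis $J\mathcal H_1\perp\mathcal H_2$ and $J\mathcal H_2\perp\mathcal H_1$ forces $P$ to preserve each $\mathcal H_i$, and constancy of the slant angles gives $P^2=-\cos^2\theta_i\,\mathrm{Id}$ on $\mathcal H_i$ and $g(PZ,PW)=\cos^2\theta_2\,g(Z,W)$ for $Z,W\in\mathcal H_2$; I will also use that $P$ is skew-symmetric. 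The two warped-product facts I need are $\nabla_X Z=(X\ln f)Z$ for $X\in\mathcal H_1$, $Z\in\mathcal H_2$, and that the $\mathcal H_1$-component of $\nabla_Z W$ equals $-g(Z,W)\nabla(\ln f)$ for $Z,W\in\mathcal H_2$.

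First I would extract two preliminary identities for $X\in\mathcal H_1$ and $Z,W\in\mathcal H_2$. Expanding $\tilde\nabla_X(JZ)=J\tilde\nabla_X Z$ with Gauss and Weingarten and comparing tangential parts, the warped formula makes the $\nabla_X(PZ)$ and $P\nabla_X Z$ terms cancel, leaving $A_{FZ}X=-t\sigma(X,Z)$. Feeding this into $g(\sigma(X,Z),FW)=g(A_{FW}X,Z)$ and using the skew-symmetry of $J$ yields the symmetry relation $g(\sigma(X,Z),FW)=g(\sigma(X,W),FZ)$, which is the identity that will later kill an unwanted term.

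The heart of the argument is to compute $g(\sigma(Z,W),FX)$ for fibre vectors $Z,W$ and a base vector $X$. Writing $FX=JX-PX$ and transferring $J$ across $\tilde\nabla$ by the Kaehler condition, the term $g(\tilde\nabla_Z PW,X)$ picks up the horizontal component $-g(Z,PW)\nabla(\ln f)$ and $g(\tilde\nabla_Z W,PX)$ picks up $-g(Z,W)\nabla(\ln f)$; the result is
\begin{equation}\notag g(\sigma(Z,W),FX)=(X\ln f)\,g(Z,PW)+(PX\ln f)\,g(Z,W)+g(\sigma(X,Z),FW).\end{equation}
Since the left side is symmetric in $Z$ and $W$, subtracting the same identity with $Z$ and $W$ interchanged removes the symmetric pieces: the $g(Z,W)$-term drops, the second-fundamental-form difference vanishes by the symmetry relation of the previous step, and $g(Z,PW)-g(W,PZ)=2g(Z,PW)$ by skew-symmetry of $P$. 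What survives is
\begin{equation}\notag (X\ln f)\,g(Z,PW)=0,\qquad X\in\mathcal H_1,\ Z,W\in\mathcal H_2.\end{equation}

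Finally I would read off the dichotomy. If $\theta_2\neq\pi/2$ then $P$ does not vanish on $\mathcal H_2$, so choosing $W$ with $PW\neq0$ and $Z=PW$ gives $g(Z,PW)=\cos^2\theta_2\,|W|^2\neq0$; hence $X\ln f=0$ for every $X\in\mathcal H_1$, and since $f$ depends only on $N_1$ this means $f$ is constant, which is case (i). Otherwise $\theta_2=\pi/2$, i.e. $\mathcal H_2$ is anti-invariant and $M_{\theta_2}$ is the totally real factor $M_\perp$, which is case (ii). The step I expect to be delicate is the computation of $g(\sigma(Z,W),FX)$: the naive approach of pairing $\sigma(X,Z)$ with $FW$ makes the warping-function terms cancel through the skew-symmetry of $P$ on the fibre, so the crucial point is to place the two fibre directions in the second fundamental form and pair against the base direction, which is exactly what lets the horizontal parts of the connection expose $X\ln f$; the symmetrization and the earlier symmetry relation then have to conspire precisely to isolate it.
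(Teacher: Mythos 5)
Your main identity $g(\sigma(Z,W),FX)=(X\ln f)g(Z,PW)+(PX\ln f)g(Z,W)+g(\sigma(X,Z),FW)$ is correct, and antisymmetrizing it in $Z,W$ is the right move; but the argument has a genuine gap at the preliminary ``symmetry relation'' $g(\sigma(X,Z),FW)=g(\sigma(X,W),FZ)$. You obtain it by asserting that $\nabla_X(PZ)$ and $P\nabla_XZ$ cancel. They need not: the warped-product formula $\nabla_XV=(X\ln f)V$ applies to lifts $V$ of vector fields on $M_{\theta_2}$, and $PZ$ is a section of $TM_{\theta_2}$ but in general not a lift, because $P$ restricted to the fibre distribution may vary along the base. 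The tangential part of $\tilde\nabla_X(JZ)=J\tilde\nabla_XZ$ actually yields the standard identity $(\nabla_XP)Z=A_{FZ}X+t\sigma(X,Z)$, where $(\nabla_XP)Z=\nabla_X(PZ)-P\nabla_XZ=[X,PZ]$ is a vertical field with no reason to vanish; hence $g(\sigma(X,Z),FW)-g(\sigma(X,W),FZ)=g((\nabla_XP)W,Z)$, and metric compatibility gives $g((\nabla_XP)W,Z)=X\bigl[g(PW,Z)\bigr]-2(X\ln f)g(PW,Z)$. Comparing with what your antisymmetrized main identity legitimately gives, namely $g(\sigma(X,Z),FW)-g(\sigma(X,W),FZ)=2(X\ln f)g(PZ,W)$, you see that your symmetry relation is equivalent to $(X\ln f)g(PZ,W)=0$, i.e.\ to the very conclusion you are trying to reach. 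The proof is therefore circular at that step.

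What the two correct relations do give, upon elimination, is $X\bigl[g(PZ,W)\bigr]=0$ for all lifts, and the missing ingredient is to feed this into the slant condition rather than into $g(Z,PW)$ directly: expanding $g(PZ,PW)=\cos^2\theta_2\,g(Z,W)$ in an orthonormal frame of lifts (with $g(e_\alpha,e_\beta)=f^2\delta_{\alpha\beta}$) and differentiating along $X$ gives $X[\cos^2\theta_2]+4(X\ln f)\cos^2\theta_2=0$; since a bi-slant submanifold has constant $\theta_2$, this forces $(X\ln f)\cos^2\theta_2=0$ and the dichotomy follows. This is essentially the route taken in the cited source \cite{CU} (the survey itself states Theorem \ref{T:11.6} without proof, so the comparison is with that reference). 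Your final step, evaluating at $Z=PW$, is fine once the correct pointwise identity is in hand, since the relation is tensorial in $Z$ and $W$.
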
  

For  pointwise pseudo-slant warped product submanifolds in a Kaehler Manifolds, we have the following two results from \cite{SS}.

\begin{theorem}\label{T:11.7}  Let $M=F\times_fM_{\theta}$ be a mixed geodesic warped product submanifold of a 2m-dimensional Kaehler manifold $\tilde M$ such that $F$ is a $\gamma$-dimensional totally real submanifold and $M_{\theta}$ is a $\beta$-dimensional proper pointwise slant submanifold of $M$.  Then the squared norm of the second fundamental form $||\sigma||^2$ of $M$ satisfies
$$||\sigma||^2 \geq \beta \cos^2\theta ||\nabla(\ln f)||^2.$$
\end{theorem}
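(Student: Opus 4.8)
The plan is to follow the standard method for Chen-type inequalities on warped product submanifolds, specialized to a pointwise slant fiber. Write $\mathcal D^\perp=TF$ for the totally real base (which carries the warping function) and $\mathcal D^\theta=TM_\theta$ for the pointwise slant fiber, and decompose $J=P+F$ into its tangential and normal parts, so that $JU=PU+FU$ with $PU\in TM$ and $FU\in T^\perp M$ for $U\in TM$. Two ingredients drive the estimate: (i) the warped product connection formula $\nabla_XZ=\nabla_ZX=(X\ln f)Z$ for $X\in TF$, $Z\in TM_\theta$ (equivalently, the fibers $\{q\}\times M_\theta$ are totally umbilical in $M$ with mean curvature vector $-\nabla\ln f$); and (ii) the pointwise slant identity $P_\theta^2=-\cos^2\theta\,\mathrm{Id}$ on $TM_\theta$, where $P_\theta$ is the $TM_\theta$-component of $P$, giving $|P_\theta Z|^2=\cos^2\theta\,|Z|^2$ and $\langle P_\theta Z,P_\theta W\rangle=\cos^2\theta\,\langle Z,W\rangle$. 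Throughout I will also use that $P$ is skew-symmetric.

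First I would record the structural facts forced by the hypotheses: since $F$ is totally real, $J(TF)\perp TF$, so for $X\in TF$ one has $JX=PX+FX$ with $PX\in TM_\theta$ and $FX$ normal; and mixed geodesy gives $\sigma(X,Z)=0$ for all $X\in TF$, $Z\in TM_\theta$, so the nonzero contributions to $\|\sigma\|^2$ come from the same-factor components. Then, for $X\in TF$ and $Z,W\in TM_\theta$, I would compute the relevant normal components of $\sigma(Z,W)$ by differentiating the identity $JW=PW+FW$ covariantly along $Z$, invoking $\tilde\nabla J=0$ and the Gauss--Weingarten formulas together with (i). The mixed geodesy hypothesis is exactly what eliminates the shape-operator terms $A_{FW}Z$ that would otherwise couple to the mixed part of $\sigma$, while the slant identity (ii) converts the arising quantities $\langle P_\theta Z,P_\theta W\rangle$ into the factor $\cos^2\theta$. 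The aim of this step is a clean identity expressing the chosen components of $\sigma(Z,W)$ in terms of $(X\ln f)$ weighted by $P_\theta$, up to terms that are orthogonal to the directions used in the final summation.

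With such an identity in hand, I would conclude by a frame computation. Choose an orthonormal frame $X_1,\ldots,X_\gamma$ of $TF$ and a canonical slant frame $Z_1,\ldots,Z_\beta$ of $TM_\theta$ adapted to $P_\theta$ (pairing the $Z_a$ up to the factor $\cos\theta$). Then $\|\sigma\|^2\ge\sum_{a,b,r}\langle\sigma(Z_a,Z_b),e_r\rangle^2$ already when restricted to the normal directions identified above, and summing the squared components produced by the key identity over the $\beta$ slant directions (yielding the dimension factor $\beta$) and reconstructing $\|\nabla\ln f\|^2=\sum_i(X_i\ln f)^2$ over the base gives exactly $\beta\cos^2\theta\,\|\nabla\ln f\|^2$; discarding the remaining nonnegative terms of $\|\sigma\|^2$ yields the inequality.

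The main obstacle is the derivation of the key identity, because $J$ does not preserve the warped product splitting $TM=TF\oplus TM_\theta$: the tangential operator $P$ has nonzero off-diagonal blocks between $TF$ and $TM_\theta$ (indeed, one checks that $PX=0$ for all $X\in TF$ would force $f$ to be constant), so $PW$ acquires a $TF$-component whose variation along fiber directions generates connection terms involving non-basic fields. Controlling these terms, and verifying that after using skew-symmetry of $P$, the slant identity, and mixed geodesy they either cancel or lie in directions orthogonal to those summed, is the delicate core of the argument; keeping the $\cos^2\theta$ bookkeeping exact---rather than picking up an extra $\csc^2\theta$ as in the holomorphic-base case of Theorem~\ref{T:11.5}---is precisely where the totally real nature of the base $F$ must be used.
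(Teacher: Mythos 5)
First, note that the paper itself gives no proof of Theorem \ref{T:11.7}: it is a survey statement quoted from \cite{SS}, so your proposal can only be measured against the standard argument in that reference. Your general template is the right one (differentiate $JW=PW+FW$ along fibre directions, use $\tilde\nabla J=0$, Gauss--Weingarten, the umbilicity $\nabla_ZX=(X\ln f)Z$, mixed geodesy, and the pointwise slant identity, then sum over adapted frames). However, there is a genuine structural error at the heart of your plan. In the pointwise pseudo-slant setting of \cite{SS} --- consistent with this paper's own definition of a hemi-slant submanifold, which requires $J\mathcal H^\perp\subset T^\perp M$ --- the totally real factor satisfies $PX=0$ for all $X\in TF$, and hence by skew-symmetry of $P$ one gets $P(TM_\theta)\subseteq TM_\theta$: the operator $P$ has \emph{no} off-diagonal blocks. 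Your parenthetical claim that ``$PX=0$ for all $X\in TF$ would force $f$ to be constant'' is false for a \emph{pointwise} slant fibre; that rigidity is exactly the content of the non-existence result for a constant slant angle (Theorem \ref{T:11.2}), and its failure when $\theta$ is allowed to vary is precisely what makes Theorem \ref{T:11.7} non-vacuous. Under your weaker reading of ``totally real'' (only $J(TF)\perp TF$), the vector $PW$ for $W\in TM_\theta$ acquires a $TF$-component, the slant function of $\mathcal D^\theta$ is no longer captured by $P^2=-\cos^2\theta\,\mathrm{Id}$ on $TM_\theta$ alone, and the ``delicate core'' you defer does not close up as described; you have in effect set up a harder (and different) problem and left its key step unproved.

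With the correct setup the argument is short and you should carry it out explicitly. For $X\in TF$ and $Z,W\in TM_\theta$ one computes
\begin{equation*}
\left<\sigma(Z,W),JX\right>=-\left<\tilde\nabla_Z(PW+FW),X\right>
=\left<Z,PW\right>(X\ln f)+\left<\sigma(X,Z),FW\right>,
\end{equation*}
using $\left<\nabla_ZPW,X\right>=-\left<Z,PW\right>(X\ln f)$ (totally umbilical fibres) and $\left<A_{FW}Z,X\right>=\left<\sigma(X,Z),FW\right>$; mixed geodesy kills the last term, giving $\left<\sigma(Z,W),JX\right>=-(X\ln f)\left<PZ,W\right>$. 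Since $\{JX_i\}_{i=1}^{\gamma}$ is an orthonormal set in $T^\perp M$ and $\sum_{a,b}\left<PZ_a,Z_b\right>^2=\sum_a\|PZ_a\|^2=\beta\cos^2\theta$ for an orthonormal frame $\{Z_a\}$ of $TM_\theta$, summing the squares of these components and discarding the rest of $\|\sigma\|^2$ yields $\|\sigma\|^2\geq\beta\cos^2\theta\,\|\nabla(\ln f)\|^2$. Note that the mixed-geodesic hypothesis and the totally real nature of $F$ (which makes $JX$ normal to $M$) are used exactly here, not in taming off-diagonal blocks of $P$.
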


\begin{theorem}\label{T:11.8}   Let $M=M_{\theta}\times_f F$ be a mixed geodesic warped product submanifold of an even-dimensional Kaehler manifold $\tilde M$ such that $M_{\theta}$ is a 
$\beta$-dimensional proper pointwise slant submanifold of $M$ and $F$ is a $\gamma$-dimensional totally real submanifold.  Then the squared norm of the second fundamental form $||\sigma||^2$ of $M$ satisfies
$$||\sigma||^2 \geq \beta \cot^2\theta ||\nabla(\ln f)||^2.$$
\end{theorem}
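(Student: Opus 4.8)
The plan is to run the Gauss--Weingarten machinery for warped products together with the Kaehler condition $\tilde\nabla J=0$ and the slant geometry of the base factor $M_\theta$. For $X$ tangent to $M_\theta$ I write $JX=PX+QX$, with $PX$ the tangential and $QX$ the normal component relative to $M$; the pointwise slant hypothesis gives $\|PX\|=\cos\theta\,\|X\|$ and $\|QX\|=\sin\theta\,\|X\|$, while the totally real condition on the fibre, $J(TF)\subseteq T^\perp M$, forces $P(TM_\theta)\subseteq TM_\theta$; note also that $P$ is skew-symmetric on $TM$. The two structural identities I will feed in are the warped-product formula $\nabla_X Z=(X\ln f)\,Z$ for $X$ tangent to $M_\theta$ and $Z$ tangent to $F$, and the mixed-geodesic hypothesis $\sigma(X,Z)=0$.

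First I would isolate a shape-operator identity. Expanding $\tilde\nabla_Z(JX)=J\,\tilde\nabla_Z X$ for $X\in TM_\theta$, $Z\in TF$ and separating tangential from normal parts, the right-hand side equals $(X\ln f)\,JZ$, which is purely normal since $JZ\in T^\perp M$; on the left the tangential part is $((PX)\ln f)\,Z-A_{QX}Z$, using $\nabla_Z(PX)=((PX)\ln f)Z$ and $\sigma(Z,PX)=0$. Matching tangential components gives
\[
A_{QX}Z=\langle\nabla(\ln f),PX\rangle\,Z,\qquad X\in TM_\theta,\ Z\in TF,
\]
and hence $\langle\sigma(Z,W),QX\rangle=\langle\nabla(\ln f),PX\rangle\,\langle Z,W\rangle$ for $Z,W$ tangent to $F$. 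This single relation carries all of the coupling between $\sigma$ and $\nabla(\ln f)$.

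Next I would fix adapted orthonormal frames $e_1,\dots,e_\beta$ of $TM_\theta$ and $Z_1,\dots,Z_\gamma$ of $TF$. Since $\{Qe_i\}$ is orthogonal with $\|Qe_i\|=\sin\theta$, the normalized fields $\widehat{Q}e_i=(\sin\theta)^{-1}Qe_i$ are orthonormal normal directions, and the identity above reads $\langle\sigma(Z_a,Z_b),\widehat{Q}e_i\rangle=(\sin\theta)^{-1}\langle\nabla(\ln f),Pe_i\rangle\,\delta_{ab}$. Retaining only these components of $\sigma$ yields
\[
\|\sigma\|^2\ \ge\ \sum_{a,b}\sum_{i}\langle\sigma(Z_a,Z_b),\widehat{Q}e_i\rangle^2=\frac{1}{\sin^2\theta}\Big(\sum_{i}\langle\nabla(\ln f),Pe_i\rangle^2\Big)\Big(\sum_{a,b}\delta_{ab}\Big).
\]
By the skew-symmetry of $P$ and the slant relation, $\sum_i\langle\nabla(\ln f),Pe_i\rangle^2=\|P\nabla(\ln f)\|^2=\cos^2\theta\,\|\nabla(\ln f)\|^2$, which supplies the factor $\cot^2\theta=\cos^2\theta/\sin^2\theta$; the diagonal sum over the fibre supplies the dimensional constant, and one is led to the stated inequality $\|\sigma\|^2\ge\beta\cot^2\theta\,\|\nabla(\ln f)\|^2$.

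The connection bookkeeping and the slant trigonometry are routine. The genuinely delicate step—and the one I would scrutinize—is the dimensional constant in the last display. The diagonal sum $\sum_{a,b}\delta_{ab}$ counts the totally real fibre directions, while the base sum has already collapsed into a single $\cos^2\theta\,\|\nabla(\ln f)\|^2$; so the naive bookkeeping produces the \emph{fibre} dimension times $\cot^2\theta\,\|\nabla(\ln f)\|^2$. Reconciling this count with the coefficient $\beta=\dim M_\theta$ recorded in the statement is therefore the crux of the argument: one must determine precisely which factor's dimension the summation yields, cross-checking against the constant-slant precedent in Theorem~\ref{T:11.3}, and it is exactly here that I would concentrate the verification.
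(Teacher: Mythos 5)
Your Gauss--Weingarten computation is sound, and since this survey states Theorem~\ref{T:11.8} without proof (it only cites \cite{SS}), your route is in fact the standard one used in the source and in Sahin's proof of Theorem~\ref{T:11.3}: expand $\tilde\nabla_Z(JX)=J\tilde\nabla_Z X$ on mixed pairs, use mixed geodesy and $\nabla_ZPX=((PX)\ln f)Z$ to get $A_{QX}Z=\langle\nabla(\ln f),PX\rangle Z$, then sum the fibre--fibre components of $\sigma$ against the normal fields $Qe_i/\sin\theta$. Two details worth making explicit: the orthonormality of $\{Qe_i/\sin\theta\}$ is not automatic from $\|Qe_i\|=\sin\theta$ alone but follows from $\langle Qe_i,Qe_j\rangle=\delta_{ij}-\langle Pe_i,Pe_j\rangle$ together with $P^2=-\cos^2\theta\,\mathrm{id}$ on the pointwise slant distribution; and $P(TM_\theta)\subseteq TM_\theta$ is exactly your observation $\langle Pe_i,Z\rangle=-\langle e_i,JZ\rangle=0$. (The paper writes the Weingarten formula with $+A_\xi X$, so your sign may differ from its convention, harmlessly for squared norms.)

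The crux you flagged resolves against the printed statement, not against your argument. The diagonal sum $\sum_{a,b}\delta_{ab}$ runs over the fibre frame, so the method delivers $\|\sigma\|^2\ge\gamma\cot^2\theta\,\|\nabla(\ln f)\|^2$ with $\gamma=\dim F$, the totally real \emph{fibre} dimension. This is exactly the pattern of Theorem~\ref{T:11.3}, where the coefficient $m=\dim N^\perp$ is likewise the fibre dimension, and of Theorem~\ref{T:11.7}, where the coefficient $\beta$ is the fibre dimension in that configuration; the coefficient $\beta=\dim M_\theta$ printed in Theorem~\ref{T:11.8} is evidently a transcription slip for $\gamma$ (the two agree only when the factors have equal dimension). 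No refinement of the bookkeeping can produce $\beta$: the same Kaehler identity applied to $\tilde\nabla_X(JZ)$ shows $A_{JZ}X=0$ for $X$ tangent to $M_\theta$, so the base--base components of $\sigma$ along $J(TF)$ vanish, and the remaining components (base--base along $Q(TM_\theta)$ or the complementary normal subbundle $\nu$, fibre--fibre along $J(TF)$) are not controlled by $\nabla(\ln f)$ at all. Since equality in your bound is expected to be attainable (base totally geodesic, fibre totally umbilical, as in the equality discussion of Theorem~\ref{T:11.3}), the $\beta$-version would actually fail whenever $\beta>\gamma$. So: record your inequality with $\gamma$, note the misprint, and your proof is complete as written.
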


 \begin{remark} Theorem 8.1 of \cite{c03} was extended to warped product pointwise semi-slant submanifolds by A. Ali, S. Uddin and A. M. Othman in \cite{A}.
 
 \end{remark}


\begin{thebibliography}{120}

\bibitem{A} Ali, A., Uddin, S.,  Othman, A. M.: Geometry of warped product pointwise semi-slant submanifolds of Kaehler manifolds, Filomat 31 (2017), no. 12, 3771--3788.

\bibitem{AK08-2} Al-Solamy F. R.,  Khan, M. A.: Non-existence of non-trivial generic warped product in Kaehler manifolds, {\it Note Mat.} {\bf 28}, no. 2, 63--68.

\bibitem{bejancu}  Bejancu,  A.: Geometry of $CR$-Submanifolds,  D. Reidel Publ. Co. (1986).

\bibitem{bishop}  Bishop, R. L., O'Neill,  B.: Manifolds of negative curvature, Trans. Amer. Math. Soc. {\bf 145}   (1969), 1--49.

\bibitem{carr00}  Carriazo, A.: Bi-slant immersions, in: {\it Proceedings ICRAMS}, (J. C. Misra, S. B. Sinha, Eds.), Narosa Publishing House,  88--97 (2000).

\bibitem{cbook}  Chen, B.-Y.: Geometry of Submanifolds : Marcel Dekker, M. Dekker, New York, (1973).

\bibitem{c2}  Chen, B.-Y.:  {Some $CR$-submanifolds of a Kaehler manifold I}, J. Differential Geometry {\bf 16}  (1981), no. 2, 305--322.

\bibitem{c3}  Chen, B.-Y.:  {Some $CR$-submanifolds of a Kaehler manifold II}, J. Differential Geometry {\bf 16}  (1981), no. 3, 493--509.

\bibitem{cbook2}  Chen, B.-Y.:   Geometry of Submanifolds and Its Applications, Science University of Tokyo  (1981).

\bibitem{c1981}  Chen, B.-Y.:  Differential geometry of real submanifolds in a Kaehler manifold, Monatsh. Math. {\bf 91} (1981), 257--274.

\bibitem{c1990}  Chen, B.-Y.:  Geometry of slant submanifolds, Katholieke University Leuven, Leuven, Belgium  (1990).

\bibitem{c4}  Chen, B.-Y.:  {Some pinching and classification theorems for minimal submanifolds}, Arch. Math. {\bf 60}  (1993),  568--578.

\bibitem{c5}  Chen, B.-Y.:   {Some new obstructions to minimal and Lagrangian isometric immersions}, Japan. J. Math. {\bf 26}  (2000), no. 1, 105--127.

\bibitem{c5.1}  Chen, B.-Y.:  {Twisted product $CR$-submanifolds in Kaehler manifolds}, Tamsui Oxford J. Math. Sci. {\bf 16}  (2000), no. 2, 105--121.

\bibitem{c6}  Chen, B.-Y.:   {Riemannian Submanifolds}, Handbook of Differential Geometry, 1  (2000),  pp. 187--418.

\bibitem{c6.1}  Chen, B.-Y.:   {Complex extensors, warped products and Lagrangian immersions},  Soochow J. Math. {\bf 26}  (2000), no. 1, 1--18.

\bibitem{c7}  Chen, B.-Y.:   {Geometry of warped product $CR$-submanifolds in Kaehler manifolds}, Monatsh. Math. {\bf 133}  (2001), no. 3, 177--195.

\bibitem{c8}  Chen, B.-Y.:  {Geometry of warped product $CR$-submanifolds in Kaehler manifolds II}, Monatsh. Math. {\bf 134}  (2001), no. 2, 103--119.

\bibitem{c8.1}  Chen, B.-Y.:   Riemannian geometry of Lagrangian submanifolds, Taiwanese J. Math. {\bf 5}  (2001), no. 4, 681--723.

\bibitem{c02}  Chen, B.-Y.:  On isometric minimal immersions  from warped products into real space forms, Proc. Edinburgh Math. Soc. {\bf 45}  (2002), no. 3, 579--587.

\bibitem{c02-2}  Chen, B.-Y.:  Geometry of warped products as Riemannian submanifolds and related problems,  Soochow J. Math. {\bf 28}  (2002), no. 2, 125--156.

\bibitem{c02-3}  Chen, B.-Y.:   Real hypersurfaces in complex space forms which are warped products. {Hokkaido Math. J.} {\bf 31}  (2002), no. 2, 363--383. 

\bibitem{c02-4}  Chen, B.-Y.:   Non-immersion theorems for warped products in complex hyperbolic spaces, Proc. Japan Acad. Ser. A Math. Sci. {\bf 78} (2002), no. 6, 96--100.

\bibitem{c03}  Chen, B.-Y.:  Another general inequality for $CR$-warped products in complex space forms. Hokkaido Math. J. {\bf 32}  (2003), no. 2, 415--444. 

\bibitem{c03-2}  Chen, B.-Y.:  A general optimal inequality for warped products in complex projective spaces and its applications, Proc. Japan Acad. Ser. A Math. Sci. {\bf 79}  (2003), no. 4, 89--94.

\bibitem{c04}  Chen, B.-Y.:  Warped products in real space forms, Rocky Mountain J. Math. {\bf 34}  (2004), no. 2, 551--563.

\bibitem{c04-2}  Chen, B.-Y.:  $CR$-warped products in complex projective spaces with compact holomorphic factor. Monatsh. Math. {\bf 141}  (2004), no. 3, 177--186.
 
 \bibitem{c05}  Chen, B.-Y.:  A general optimal inequality for arbitrary Riemannian submanifolds.  J. Inequal. Pure Appl. Math. {\bf 6}  (2005), no. 3, Article 77, 10 pp.
 
\bibitem{c11}  Chen, B.-Y.:  Pseudo-Riemannian geometry, $\delta$-invariants and applications, World Scientific,  Hackensack, NJ, (2011).

\bibitem{c11}  Chen, B.-Y.:  An optimal inequality for CR-warped products in complex space forms involving CR $\delta$-invariant, Internat. J. Math. 23 (2012), no. 3, 1250045, 17 pp.

\bibitem{c15}  Chen, B.-Y.:  Total mean curvature and submanifolds of finite type, World Scientific,  Hackensack, NJ, (2015).

\bibitem{c16.1}  Chen, B.-Y.: CR-submanifolds and $\delta$-invariants, in: Geometry of 
Cauchy-Riemann submanifolds (2016), pp. 27Ð55. 

\bibitem{c16.2}  Chen, B.-Y.:  CR-warped submanifolds in Kaehler manifolds, in: Geometry of 
Cauchy-Riemann submanifolds (2016), pp.  1Ð25.

\bibitem{c18}  Chen, B.-Y.: Correction to : Geometry of warped product $CR$-submanifolds in Kaehler manifolds, {\it Monatsh. Math.} {\bf 186} (2018), no. 3, 559--560.

\bibitem{CD08}  Chen, B.-Y.,  Dillen, F.: Warped product decompositions of real space forms and Hamil\-tonian-stationary Lagrangian submanifolds, Nonlinear Anal. {\bf 69} (2008), no. 10, 3462--3494.

\bibitem{CD08-2}  Chen, B.-Y., Dillen, F.:  Optimal inequalities for multiply warped product submanifolds, Int. Electron. J. Geom. {\bf 1}, no. 1, 1--11 (2008); Erratum, ibid {\bf 4} (2011), no. 1, 138.

\bibitem{CG}  Chen, B.-Y., Garay, O. J.: Pointwise slant submanifolds in almost Hermitian manifolds,  Turkish J. Math. {\bf 36}   (2012), no. 4, 63--640. 

\bibitem{CK}  Chen, B.-Y., Kuan, W. E.:  Sous-varietes produits et plongement de Segre,  Comptes Rendus Math. 296 (1983), no. 15, 689--690.

\bibitem{CK}  Chen, B.-Y., Kuan, W. E.: The Segre imbedding and its converse. C. R. Acad. Sci. Paris Ser. I Math.  {\bf 7}  (1985), no. 1, 1--28.

\bibitem{CM}  Chen, B.-Y.,  Maeda,  S.:  Real hypersurfaces in nonflat complex space forms are irreducible,  Osaka J. Math. {\bf 40}  (2003), no. 1, 121--138. 

\bibitem{CO}  Chen, B.-Y.,  Ogiue, K.: On totally real submanifolds, Trans. Amer. Math. Soc. {\bf 193}  (1974), 257--266.

\bibitem{CU}  Chen, B.-Y., Uddin, S.: Warped product pointwise bi-slant submanifolds of Kaehler manifolds, Publ. Math. Debrecen 92 (2018),  183--199.

\bibitem{CV}  Chen, B.-Y., Vrancken, L. Lagrangian submanifolds satisfying a basic equality, { Math. Proc. Camb. Phil. Soc.} {\bf 120}  (1996), no. 2, 291--307.

\bibitem{CWei}   Chen, B.-Y., Wei, W. S.: Growth estimates for warping functions and their geometric applications, Glasg. Math. J. {\bf 51}  (2009), no. 3, 579--592.

\bibitem{KAJ08} Khan, K. A.,  Ali, S.,  Jamal, N.:  Generic warped product submanifolds in a Kaehler manifold, Filomat {\bf 22}  (2008), no. 1, 139--144.

\bibitem{moore}  Moore,  J. D.: Isometric immersions of riemannian  products, J. Differential Geometry {\bf 5}  (1971), no. 1-2, 159--168.

\bibitem{nash} Nash, J. F.: The imbedding problem for Riemannian manifolds, {\it Ann. of Math.} {\bf 63}  (1956),  20--63.

\bibitem{oneill} O'Neill, B.: Semi-Riemannian Geometry with Applications to Relativity,  Academic Press, New York (1983).

\bibitem{pap94}   Papaghiuc, N.: Semi-slant submanifolds of a Kaehlerian manifold, {\it An. Stiint. Univ. Al. I. Cuza Iasi Sec. I a Mat.} {\bf 40} (1994), 55--61.

\bibitem{Sahin1}  Sahin, B.: Nonexistence of warped product semi-slant submanifolds of Kaehler manifolds, Geom. Dedicata {\bf 117}  (2006), 195--202.

\bibitem{Sahin2}  Sahin, B.: Notes on doubly warped and doubly twisted product $CR$-submanifolds of Kaehler manifolds,  Mat. Vesnik 59   (2007), no. 4, 205--210.

\bibitem{sahin09a}  Sahin, B.: Warped product submanifolds of Kaehler manifolds with a slant factor, {\it Ann. Polon. Math.} {\bf 95} (2009), 207--226.

\bibitem{sahin13d}  Sahin, B.:  Warped product pointwise semi-slant submanifolds of  Kaehler manifolds, {\it Port. Math.} {\bf 70} (2013), 251--268.

\bibitem{SS} Srivastava, S. K.,   Sharma, A.: Pointwise Pseudo-slant Warped Product Submanifolds in a K\"ahler Manifold, Mediterr. J. Math. {\bf 14} (2017), no. 1, Art. 20, 18 pp.

\bibitem{Uddin}  Uddin, S.: On doubly warped and doubly twisted product submanifolds,
Int. Electron. J. Geom. {\bf 3}   (2010), no. 1, 35--39.

\bibitem{Uddin2} Uddin, S.,  Chen, B.-Y.,  Al-Solamy, F. R.: Warped product bi-slant immersions in Kaehler manifolds, Mediterr. J. Math. 14 (2017), no. 2, Art. 95, 11 pp. 

\bibitem{US18} Uddin, S., Stankovic, M. S.: Warped product submanifolds of Kaehler manifolds with pointwise slant fiber, Filomat 32 (2018), no. 1, 35--44.
\end{thebibliography}
\end{document}